\theoremstyle{plain}
\newtheorem{theorem}{Theorem}[section]
\newtheorem{corollary}[theorem]{Corollary}
\newtheorem{lemma}[theorem]{Lemma}
\newtheorem{proposition}[theorem]{Proposition}
\newtheorem{fact}[theorem]{Fact}
\newtheorem{claim}[theorem]{Claim}
\theoremstyle{definition}
\newtheorem{remark}[theorem]{Remark}
\newtheorem{remarks}[theorem]{Remarks}
\newtheorem{example}[theorem]{Example}
\newtheorem{question}[theorem]{Question}
\newtheorem{definition}[theorem]{Definition}
 \DeclareMathOperator{\Aut}{Aut}
\def\max{\mathop{\mathrm{max}}\nolimits}
\newcommand{\RI}{{\mathbb R}}\newcommand{\QI}{{\mathbb Q}}
\newcommand{\ZI}{{\mathbb Z}}
\newcommand{\cF}{{\mathcal F}}
  \newcommand{\F}{\mathbf F} 
 \newcommand{\C}{C}
\newcommand{\G}{\Gamma}
\newcommand{\GL}{\mathrm {GL}}\newcommand{\PGL}{\mathrm {PGL}}
\newcommand{\FI}{{\bf{F}}}
\newcommand{\e}{\varepsilon}
\newcommand{\val}{\mathrm{val}}
\newcommand{\rk}{\mathrm{rk}}
\newcommand{\ssi}{\Longleftrightarrow}
\newcommand{\inj}{\hookrightarrow}
\newcommand{\surj}{\twoheadrightarrow}
\newcommand{\del}{\partial}
\newcommand{\sq}{${7\over 4}$}
\date{\today}
\title{Removing chambers in Bruhat--Tits buildings} 
\author{Sylvain Barr\'e}
\address{\hskip-\parindent
Sylvain Barr\'e, Universit\'e de Bretagne Sud, Universit\'e Europ\'eenne de Bretagne, France}
\email{sylvain.barre@univ.ubs.fr}
\author{Mika\"el Pichot}
\address{\hskip-\parindent
Mika\"el Pichot, Dept. of Mathematics \& Statistics, McGill University, Montreal, Quebec, Canada H3A 2K6}
\email{pichot@math.mcgill.ca}
\begin{document}


\begin{abstract}
We introduce and study a family of countable groups constructed from Euclidean buildings by ``removing" suitably chosen subsets of chambers. 
\end{abstract}

\maketitle

In this paper $X$ denotes a CAT(0) space and $\G$ is a countable group acting properly isometrically on $X$ with $X/\G$ compact. 

The idea is the following. We start with a Euclidean building $X$ of rank 2, which we see as a space of ``maximal rank",  and remove chambers from $X$ equivariantly with respect to the acting group $\G$. Taking a  universal cover of the resulting space,  this leads a new family of groups, which typically are extensions of the given group $\G$, and a new class of CAT(0) spaces, which we call \emph{building with chambers missing}. 

 The ``rank" of these new spaces is \emph{a priori} lower than that of the initial building. The basic reason for that, of course, is that all apartments containing the deleted chambers have disappeared.   In some cases, the rank will decrease in a controlled way. One might expect, for example, that the least the proportion of removed chambers is, the closest the rank of these spaces is from the initial buildings.   
These  groups and spaces are examples of objects ``of intermediate rank" in the sense of \cite{rd}. Buildings $X$ are viewed as spaces of maximal rank  among their rank interpolating siblings (e.g.\ the triangle spaces in the $\tilde A_2$ case).

Removing chambers in buildings leads to a rich supply of  groups and spaces of  intermediate rank  on which the following  alternative can be tested:  

\begin{center}
$\G$ is hyperbolic   $\leftrightarrow\G$ contains a copy of $\ZI^2$ 
\end{center}
see Section 6.B$_3$ in \cite{gromov1}.  This problem is one of our original motivation, for this paper, and for ``rank interpolation" in general.

Let us now describe our main results.

 

\subsection*{Buildings with chambers missing} 
A building with chambers missing consists of a simplicial complex $X$ endowed with a free action of a countable group $\G$ with compact quotient, which satisfies certain axioms for chambers removal described in Section \ref{defs}, Definition \ref{bmiss}. 
As for usual Tits buildings, they are (in the non degenerate case) organized into types, according to the Coxeter diagram associated with them (which is inherited from the building they come from). Accordingly, we speak of  spherical or Euclidean building with chambers missing when the diagram is finite or Euclidean.  

 Euclidean buildings with chambers missing can be endowed with a natural piecewise linear metric, which is CAT(0) precisely when $X$ is of dimension 2, see Section \ref{defs}, if one chamber at least is missing (if no chamber is missing, i.e.\ if $X$ is a Euclidean building, then the metric is always CAT(0) by well-known results of Bruhat and Tits).  Henceforth we assume that $X$ is two dimensional, endowed with its natural CAT(0) metric, and that $\G$ acts on $X$  properly isometrically with compact quotient. 
 
 \subsection*{Rank interpolation} Our goal is to explore the world of groups and spaces interpolating between the higher rank Euclidean buildings and the hyperbolic spaces using this operation of surgery of chambers on buildings. Our first result is that this idea collapses in some, presumably rare, cases. 
 Namely, we show that the rank can drop abruptly from 2 (the rank of the original building) to 1 (namely, hyperbolicity) by removing (equivariantly)  \emph{a single chamber} from a Euclidean building. 

 \begin{theorem}\label{Thm - Hyperbolic} There exists a Euclidean building $X$ of rank 2 and a group $\G$ acting properly isometrically on $X$ with compact quotient, such that the universal cover of the space obtained by removing the orbit of a {\bf single} chamber in $X$ is hyperbolic.  
 \end{theorem}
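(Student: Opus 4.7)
The strategy is to use the flat plane theorem (see e.g.\ Bridson--Haefliger, or \cite{gromov1}): a proper CAT(0) space on which a group acts properly and cocompactly is Gromov hyperbolic if and only if it does not contain an isometrically embedded copy of the Euclidean plane. Writing $X':=X\setminus(\G\cdot c^\circ)$ with the metric from Section \ref{defs} (CAT(0) because $\dim X=2$) and letting $\tilde X'$ be its universal cover, the Cartan--Hadamard theorem for locally CAT(0) spaces turns $\tilde X'$ into a CAT(0) space on which the extension $\tilde\G$ of $\G$ by $\pi_1(X'/\G)$ acts properly and cocompactly. It is therefore enough to exhibit a triple $(X,\G,c)$ for which $\tilde X'$ contains no flat plane.

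I would work in an $\tilde A_2$ building $X$, where apartments are tilings of $\RI^2$ by equilateral triangles and the link of each vertex is the incidence graph of a projective plane equipped with edges of length $\pi/3$, so that flat coronas around a vertex correspond to closed geodesics of length $2\pi$ in the link, i.e.\ to its $6$-cycles. Any flat plane $P\subset\tilde X'$ projects under the covering map to a locally isometric immersion $\RI^2\to X'$; being two-dimensional inside a two-dimensional simplicial complex it must traverse vertices, and at each such vertex it uses six chambers of $X'$ forming a flat corona. Consequently $P$ produces, at some vertex $v$ of $X$, a $6$-cycle in the link of $v$ whose edges all correspond to chambers \emph{outside} $\G\cdot c$. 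Ruling out flat planes thus reduces to the local combinatorial condition
$$(\star)\qquad\text{every $6$-cycle in every vertex-link of $X$ contains an edge coming from a chamber in }\G\cdot c.$$
Once $(\star)$ holds no flat can exist in $\tilde X'$, and the flat plane theorem gives the conclusion.

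The main obstacle is to produce a triple $(X,\G,c)$ verifying $(\star)$. A natural approach is to start from an $\tilde A_2$ building of small order (for instance the Bruhat--Tits building of $\PGL_3$ over a local field, or an \LSV\ arithmetic complex) and to look for a cocompact lattice $\G$ with several chamber orbits on $X$, so that the removal of a single orbit still leaves a rich $X'$. Since each vertex-link is finite and $\G$ has finitely many orbits on vertices and chambers, $(\star)$ is a finite combinatorial check on the quotient $X/\G$. The delicate point is to arrange that a \emph{single} $\G$-orbit of chambers already pierces every closed hexagonal geodesic in every link; I expect this to require a careful congruence or finite-index passage together with the type structure of $\tilde A_2$, so that the chosen orbit aligns with all the $6$-cycles it has to meet. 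Once such an example is exhibited, the geometric reduction of the previous paragraph finishes the proof.
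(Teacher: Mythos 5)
Your general framework (pull back the metric, Cartan--Hadamard, then rule out flat planes via the flat plane theorem) is the right setting and agrees with the paper's, but the reduction to condition $(\star)$ is a dead end, and it is precisely where the real content of the theorem lies. Since $\G$ acts freely, a chamber $gc$ of the orbit $\G c$ contains a given vertex $v$ only when $g^{-1}v$ is one of the three vertices of $c$, so at most $3$ chambers of the orbit meet $v$ and at most $3$ edges are deleted from the link $L_v$. In the incidence graph of a projective plane of order $q$ each edge lies on exactly $q^3$ of the $q^3(q+1)(q^2+q+1)/6$ hexagons, so three edges meet at most a fraction $18/\bigl((q+1)(q^2+q+1)\bigr)<1$ of them (for $q=2$: at most $24$ of the $28$ hexagons of the Heawood graph). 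Hence $(\star)$ can never be satisfied by removing a single chamber orbit: after the removal every vertex link still contains a $6$-cycle, i.e.\ a full flat corona. This is visible in the paper's own examples, whose links are all isomorphic to $G_6$, a graph that explicitly contains a circle of length $2\pi$. So hyperbolicity here cannot be detected vertex-by-vertex; your "finite combinatorial check on $X/\G$" has no solution, and the example you defer to a "careful congruence or finite-index passage" does not exist along this route.

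What the paper does instead is construct the quotient complex directly (three lozenges glued along a single vertex, yielding complexes $V_6^0,V_6^1$ with link $G_6$) and prove hyperbolicity by a \emph{global} flat-strip argument: every cycle of length $2\pi$ in $G_6$ through the centre of one of its tripods must pass through the centre of the other, which forces the edges labelled $1$ to form families of parallel geodesics bounded by flat strips of width $\sqrt3/2$ whose opposite boundaries again lie on such distinguished geodesics; consequently every flat strip in the universal cover has width at most $\sqrt3$, and the no-flat criterion applies. The fact that these spaces really arise from a building by deleting one chamber orbit (and that the extension is unique) is then obtained from the local criterion of Theorem \ref{th:loccrit} and Theorem \ref{cor:unicity}, not assumed at the outset. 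If you want to salvage your approach, you must replace $(\star)$ by a condition that controls how local flat coronas can be \emph{propagated} from vertex to vertex, which is exactly what the strip-width argument accomplishes.
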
 
 
 This situation is atypical. The idea is that removing (even equivariantly) only one chamber from a Bruhat--Tits building should produce spaces whose rank is ``close" to that of the original building, and in particular spaces acted upon by groups that are not hyperbolic. 
 
 \begin{question}
 Are there only finitely many examples of pairs $(X,\G)$ as in Theorem \ref{Thm - Hyperbolic}? What is the list of all possible such pairs $(X,\G)$?
 \end{question}
 
 We will give several constructions of  $(X,\G)$ with a single chamber removed along the text, but our list is finite.

\subsection*{Local rank}  The intermediate rank phenomenon can be detected locally,  asymptotically, or in between, i.e.\ at the mesoscopic level, as we have discussed in \cite{rd}. These three aspects are considered again in the present paper, since they all appear naturally in the process of removing chambers. 

We would like to quantify ``how intermediate" the rank of our buildings with chambers missing is. In Section \ref{CATrank}, we introduce a notion the {\bf local rank} for a CAT(0) space $X$ of dimension 2. The local rank is a rational number 
\[
\rk\colon G \mapsto \rk(G)\in [1,2]\cap \QI
\]
attached to any metric graph $G$ of girth $\geq 2\pi$. The extremal values are 1 and 2 which correspond respectively to $G$ having girth $>2\pi$ or $G$ being a (spherical) building. In {\bf all} other cases,  the fractional value is intermediate strictly in between 1 and 2. The local rank has a natural definition as a \emph{linear interpolation} between the two extremal cases (see Definition \ref{Def - local rank}).  If $G$ is the link of a vertex $x$ of $X$, the value $\rk(G)$ is a way to measure the \emph{proportion of flats} in the tangent cone of $X$ at $x$.
This notion encompasses the qualitative definition of \cite{rd} and produces precise numerical outputs. 
 The Moebius--Kantor graph that served our rank interpolation purposes in \cite{rd}, and that we introduced as the ``link of rank \sq" in \cite{rd}, turns out to have rank \sq\ also in the sense of Definition \ref{Def - local rank}, as a direct computation will show.

 \subsection*{The local-to-global problem}
 In general, it is unclear how a local assumption on the rank will develop into a large scale rank property of the space $X$.
  A well-known result of J. Tits shows that the situation can be understood if the local rank of $X$ is 2 (in the sense of the rank functional $\rk$ described above): if all the links of $X$ are spherical buildings, then $X$ is a Euclidean building.  (Compare the introduction of Section \ref{locglob}.) A similar statement is not true anymore for buildings with missing chambers:
 
 \begin{proposition}\label{No local to global - intro} There exists a simplicial complex, endowed with a free and  vertex transitive action of a countable group, which is {\bf not} a building with chambers missing, but all of whose links are buildings with chambers missing. 
\end{proposition}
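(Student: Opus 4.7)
The plan is to construct the example via a triangle--presentation, in the spirit of Cartwright--Mantero--Steger--Zappa. The input is a finite bipartite graph $G$ which is itself a spherical building with chambers missing, obtained from a generalized $m$-gon ($m\in\{3,4,6\}$) by removing an orbit of edges under some automorphism group. A natural candidate, echoing the discussion of the local rank, is the Moebius--Kantor graph already used in \cite{rd}, or a suitable subgraph of the incidence graph of a small projective plane. This $G$ will serve as the uniform link of the construction.

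Next, I would design a finite $2$--complex $Y$ with a single vertex $v$ and a prescribed set $\Imm$ of triangles, each triangle corresponding to a triple of directed edges at $v$ whose images in $G$ close into a circuit of length $3$; the combinatorics of $\Imm$ is arranged so that the tautological map $\mathrm{lk}(v,Y)\to G$ is a bijection. Passing to the universal cover $X=\widetilde Y$, the deck group $\G=\pi_1(Y)$ acts freely and vertex--transitively on $X$, and by construction every vertex link of $X$ is isomorphic to $G$, hence is a spherical building with chambers missing. The local hypothesis of the proposition is therefore automatic.

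The heart of the argument, and the main obstacle, is to force $X$ to \emph{not} be a building with chambers missing in the sense of Definition \ref{bmiss}. The axioms for chambers removal are inherited from an ambient Bruhat--Tits building, and therefore impose a global apartment system carrying an algebraic (projective/Moufang--type) rigidity at infinity. I plan to choose the triangle presentation $\Imm$ of ``exotic'' type, in analogy with the non--classical $\widetilde A_2$--presentations of Ronan and Cartwright--Mantero--Steger--Zappa, so that no equivariant re--insertion of triangles in $X$ recovers a Euclidean building. This reduces to a finite check: one enumerates all combinatorial triangles that could conceivably be added, forms the corresponding completions of $X$, and verifies by inspection of the rank--$2$ residue at $v$ (or by computing girths and incidences in the completed combinatorial ball of radius~$2$) that no such completion is a Euclidean building of type $\widetilde A_2$, $\widetilde B_2$, or $\widetilde G_2$. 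The core difficulty is to exhibit a presentation $\Imm$ for which this finite check succeeds while the link $G$ stays unchanged; this is a combinatorial design problem which I expect to solve by perturbing a presentation of a known Euclidean building by a ``twist'' that preserves the link but breaks one global apartment.
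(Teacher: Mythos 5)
Your overall architecture coincides with the paper's: the example is indeed a one-vertex $2$-complex whose universal cover is vertex-transitive with every link isomorphic to a fixed spherical building with chambers missing (the paper uses $G_6$ from Proposition \ref{fact6prime}, the Heawood graph minus three pairwise disjoint edges), and non-extendability is indeed certified by a finite local check, which the paper packages as the ``extension invariant'' $O_{X,\G}$ and the criterion of Theorem \ref{th:loccrit}. But your proposal stops exactly where the proposition begins: no complex is ever exhibited. The ``combinatorial design problem'' you defer --- finding a gluing $\Imm$ with the prescribed link that admits no equivariant completion --- \emph{is} the content of the statement, and the expectation that some ``twist'' of a building presentation will work is not an argument. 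It is genuinely not automatic: Section \ref{sec2} produces several gluings of three lozenges with the very same link $G_6$ (the complexes $V_6^0$, $V_6^1$) which \emph{are} buildings with exactly one chamber missing, so whether a given gluing extends depends on the labels and not on the link. The paper resolves this by writing down one further explicit gluing of three lozenges (Proposition \ref{No local to global}) and computing its extension invariant, which decomposes into three bigons and therefore carries no saturated ample family of alternating $6$-cycles.

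Two secondary points. First, the Moebius--Kantor graph cannot serve as the link: it has $16$ vertices, whereas removing open chambers from a spherical building of type $A_2$ and order $q$ deletes no vertices and hence leaves $2(q^2+q+1)$ of them ($14$ for $q=2$, $26$ for $q=3$); so that graph is not a spherical building of type $A_2$ with chambers missing and would already violate the local hypothesis of the proposition. Second, your framing of the obstruction as ``Moufang-type rigidity at infinity'' of an exotic presentation points in the wrong direction: exotic $\tilde A_2$-buildings are still buildings, so an exotic completion would be a perfectly valid extension in the sense of Definition \ref{bmiss}. The obstruction actually used is purely local and combinatorial --- by Theorem \ref{thm:titslocal} it suffices, and by Theorem \ref{th:loccrit} it is decidable from the links and the edge valencies, to check that no equivariant family of added chambers can turn every link into a generalized triangle.
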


 Nevertheless, whether or not $(X,\G)$ is a building with missing chambers \emph{can be detected locally}.  This is a consequence of the above mentioned result of Tits. In Theorem \ref{th:loccrit}, a practical version of this result is formulated by associating to $(X,\G)$ an ``invariant graph" which detects if there exists an extension \emph{and}, if there is one, provides an upper bound on the number of extensions when they exist. This allows to prove, for instance, the following result, which will be useful for our study our random groups in \cite{random}. 
  
\begin{theorem}\label{th:loccrit-intr}
Let $(X,\G)$ be a Euclidean building with chambers missing of type $\tilde A_2$ and assume that the distance between missing chambers in $X$ is at least 2. Then there is a {\bf  unique} extension $(X,\G)\leadsto (X',\G')$ of $(X,\G)$ into a Euclidean building $(X',\G')$.
\end{theorem}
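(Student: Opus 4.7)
The plan is to apply the local criterion of Theorem \ref{th:loccrit}, which reduces the uniqueness of the extension to a local rigidity question at each vertex link. Recall that in a Euclidean building of type $\tilde A_2$, each vertex link is the incidence graph of a projective plane (a generalized triangle), and each chamber of $X$ containing a vertex $v$ corresponds to exactly one edge of $\mathrm{lk}(v)$. Removing a chamber through $v$ therefore deletes one edge of $\mathrm{lk}(v)$.

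The first step is to translate the gallery-distance hypothesis into a link statement. Two missing chambers at gallery distance $\geq 2$ share no $1$-face of $X$; hence if two such chambers share a vertex $v$, the corresponding edges of $\mathrm{lk}(v)$ share no endpoint. Consequently, in every link of $X$, the edges corresponding to missing chambers form a \emph{matching}.

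The combinatorial crux is then to show that a bipartite graph $G$ obtained from the incidence graph $G^+$ of a projective plane by deleting a matching $M$ has a unique completion to a projective plane incidence graph. The idea is to recover $M$ from $G$ via a distance criterion. For a missing edge $\{p, L\} \in M$, any would-be length-$3$ path $p \to L' \to p' \to L$ in $G$ requires $L' \ne L$ and $p' \ne p$; but then $p$ and $p'$ both lie on $L$ and on $L'$, forcing $L = L'$ by uniqueness of the line through two distinct points, a contradiction. Hence $d_G(p, L) \geq 5$. Conversely, for two distinct matching edges $\{p_i, L_i\}$ and $\{p_j, L_j\}$, the pair $(p_i, L_j)$ is at distance $\leq 3$ in $G$: either $p_i \in L_j$ and they are adjacent, or a length-$3$ path can be found through a point $p' \in L_j \setminus (L_i \cup \{p_j\})$, the matching hypothesis ensuring that enough choices of intermediate vertices remain available to avoid all removed edges. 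This characterizes $M$ as the set of pairs of degree-deficient vertices at distance $\geq 5$ in $G$, and so the completion $G^+ = G \cup M$ is uniquely determined.

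By Theorem \ref{th:loccrit}, the unique local completions at every vertex assemble into a single $\G'$-equivariant Euclidean building extension $(X', \G')$ of $(X, \G)$, as claimed. The main obstacle in this plan is the combinatorial step: the matching hypothesis, guaranteed by the gallery-distance condition, is essential, since without it the missing edges in a single link could share endpoints and the distance criterion would fail to disambiguate the completion.
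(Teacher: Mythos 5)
Your overall strategy coincides with the paper's: reduce to the local criterion of Theorem \ref{th:loccrit} and recognize the missing edges of each link as pairs of degree-deficient vertices at distance $\geq 5$ (this is precisely how the type-1 edges of the paper's extension invariant $O_{X,\G}$ are defined). The first half of your combinatorial step is correct: if $\{p,L\}$ is a removed incidence then $d_G(p,L)\geq 5$, by the two-points-determine-a-line argument.

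The gap is in the converse. You assert that for two distinct matching edges $\{p_i,L_i\}$, $\{p_j,L_j\}$ the pair $(p_i,L_j)$ is at distance $\leq 3$ in $G$, with ``the matching hypothesis ensuring that enough choices of intermediate vertices remain available.'' For order $q=2$ this counting can fail: a line of the Fano plane carries only three points, and after excluding $p_j$, the point $L_i\cap L_j$, and any point matched to the line it spans with $p_i$, no admissible intermediate vertex need remain. The paper's classification exhibits exactly this failure: in the configuration $G_2$ of Proposition \ref{fact6prime} --- which does arise as the link of a vertex-transitive Euclidean building with one chamber missing (Proposition \ref{prop83}), hence lies within the scope of the theorem, since the three missing edges of such a link come from three chambers of one orbit meeting only at the vertex --- there is an \emph{extra} pair of deficient vertices at distance $\geq 5$, the ``diagonal'' appearing in the proof of Theorem \ref{cor:unicity}. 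So the distance criterion does not single out the matching, and your characterization of $M$ breaks down. The paper closes this case by a different argument: two distinct alternating 6-cycles in the extension invariant would need disjoint triples of type-1 edges, and the invariant of $G_2$ has only four such edges. Your proof needs either this supplementary argument or a justification that every link has a single missing edge, which the distance hypothesis does not provide. (The final assembly step, decomposing the invariant into one component per missing chamber and treating them successively as in Theorem \ref{cor:unicity2}, is elided in your write-up but is routine once the local uniqueness is established.)
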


This result shows that some form of rigidity remains in buildings with chambers missing (depending how chambers are removed). By Theorem \ref{th:loccrit-intr}, the space $(X,\G)$ ``remembers" entirely the building where it is coming from. Note that it applies in particular when $(X,\G)$ has {\bf only one} chamber missing. As we have already observed, in Theorem \ref{Thm - Hyperbolic},  the space $X$  itself \emph{may be hyperbolic} in this case. Yet it will remember the (rank 2) building that it is coming from.

\subsection*{Hyperbolicity vs the existence of $\ZI^2$}  Groups acting on Bruhat--Tits (Euclidean) buildings  are either hyperbolic or they contain $\ZI^2$ (see \cite{mostow}). As we already mentioned, a similar alternative is not known for more general groups acting properly on a CAT(0) space with compact quotient. Concrete cases where it is open are provided by groups of intermediate rank, for example for the groups of rank \sq\ of \cite{rd}. The idea of removing chambers from Bruhat--Tits buildings is directly related to this problem: if one removes ``only few" chambers from a building of higher rank, then the existence of flats in $X$ and/or $\ZI^2$ in $\G$ should to persist ``in general". If a ``significant quantity" of chambers is removed, then the situation is less clear.   In a followup paper \cite{random}, we develop these ideas further and introduce new family of \emph{random} groups that fits the present rank interpolation framework.

\subsection*{The spherical case}\label{intr:pyr}
Several examples of buildings with chambers missing constructed below are drawn from a class of groups acting on complexes whose links are spherical buildings with 3 chambers missing coming from the incidence graph $H$ of the Fano plane (i.e.\ the projective plane on the field $\F_2$ on two elements). Recall that $H$ is the unique spherical building of type $A_2$ and order 2, cf.\ e.g.\ \cite{Ronan}.

We show  that  (Proposition \ref{fact6prime}): 
\begin{proposition}
There exist exactly 6 isomorphism classes $G_1, G_2,\ldots, G_6$ of spherical buildings of type $A_2$ and order 2 with 3 chambers missing.
\end{proposition}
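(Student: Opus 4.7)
The plan is to identify the (unique) spherical building of type $A_2$ and order $2$ with the Heawood graph $H$, that is, the incidence graph of the Fano plane $\IP^2(\F_2)$: chambers correspond to edges of $H$ (equivalently, flags $(p,\ell)$ with $p\in\ell$), and panels correspond to vertices. Since each vertex of $H$ has degree $3$, the axioms of Definition~\ref{bmiss} force the three removed chambers to form a matching in $H$, i.e., three flags $(p_i,\ell_i)_{i=1,2,3}$ with the $p_i$ pairwise distinct and the $\ell_i$ pairwise distinct. Isomorphism classes therefore correspond to orbits of $\Aut(H)\cong \PGL(3,\F_2)\rtimes \ZI/2\ZI$ (order $336$, the $\ZI/2\ZI$-factor being the Fano duality interchanging points and lines) on $3$-matchings of $H$.

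I would classify these orbits using two obviously $\Aut(H)$-invariant features: the unordered pair $(C_p,C_\ell)$, where $C_p$ records whether the three points $\{p_1,p_2,p_3\}$ are collinear in the Fano plane and $C_\ell$ whether the three lines $\{\ell_1,\ell_2,\ell_3\}$ are concurrent; and the integer $s=\#\{(i,j):i\neq j,\ p_i\in\ell_j\}\in\{0,1,2,3\}$. Fano duality swaps $C_p$ with $C_\ell$, leaving three cases up to isomorphism (both, exactly one, neither). Using the very rigid combinatorics of the Fano plane (e.g., two distinct points lie on a unique line), these refine into six orbits: in the ``both'' case, a single orbit with $s=0$, parametrized by a line $m$ and an external point $q\notin m$ with $\ell_i=qp_i$; in the ``exactly one'' case, two orbits distinguished by $s\in\{0,2\}$ according to whether one of the chosen $\ell_i$ coincides with the collinearity line $m$ (dually, one $p_i$ with the concurrency point $q$); and in the ``neither'' case, three orbits indexed by the number $k\in\{0,1,2\}$ of ``medians'' chosen (where the median through $p_i$ is the unique line through $p_i$ avoiding the other two triangle vertices), with $s=3-k$. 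Summing gives $1+2+3=6$ orbits.

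The delicate step is the ``neither'' case, where I would verify that for each value of $k$ the configurations form a single orbit under the stabilizer of the triangle $\{p_1,p_2,p_3\}$, which is a copy of $S_3$ in $\PGL(3,\F_2)$. For $k=0$ the two ``cyclic orientations'' of the triangle edges are swapped by any transposition in this $S_3$. For $k\in\{1,2\}$, $S_3$-transitivity on the three vertices together with the observation that the two non-median lines through any fixed vertex $p_i$ are swapped by the transposition fixing $p_i$ yields transitivity. A numerical sanity check is that the six orbit sizes $28, 56, 168, 168, 168, 56$ sum to $644$, the total number of $3$-matchings in the Heawood graph.
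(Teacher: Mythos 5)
Your orbit classification is correct, and it takes a genuinely different route from the paper's. The paper works entirely inside the Heawood graph $H$ and sorts $3$-matchings $\{e,f,g\}$ by the multiset of pairwise edge-distances in $H$, getting $1+2+1+2=6$ orbits from the cases $(1,1,1)$, $(1,1,2)$, $(1,2,2)$, $(2,2,2)$; you instead read the matching as three flags of the Fano plane and sort by the collinearity/concurrency pattern together with the incidence count $s$. The two sets of invariants cross-cut each other: your ``both'' and ``exactly one, $s=0$'' classes are exactly the paper's two $(2,2,2)$ orbits (i.e.\ $G_5$ and $G_6$), while your ``exactly one, $s=2$'' and ``neither, $k=1$'' classes are the two $(1,1,2)$ orbits. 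I have checked that your six classes have sizes $28,56,56,168,168,168$, that each is a single $\Aut(H)$-orbit, and that they exhaust the $644$ matchings; in the ``neither, $k=1$'' case note that one of the three a priori assignments of sides to the two non-median vertices must be discarded because it makes all three lines concurrent at the median vertex---this is precisely what leaves two assignments, swapped by the transposition you invoke. What your approach buys is the arithmetic consistency check against the total matching count (which the paper does not have); what the paper's buys is that the distance data is reused directly in the subsequent analysis of the $G_i$. One small omission: transitivity within the ``exactly one, $s=2$'' class also needs an argument (the Klein four-group in the stabilizer of the line $m$ acting on the four choices of the two non-$m$ lines does the job); you only flag the ``neither'' case as delicate.

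There is, however, a genuine gap at your first ``therefore'': orbits of $\Aut(H)$ on $3$-matchings give only an \emph{upper bound} on the number of isomorphism classes. Two matchings $E$, $E'$ in different orbits could a priori yield abstractly isomorphic graphs $H\setminus E\cong H\setminus E'$, since an isomorphism between the punctured graphs need not extend to an automorphism of $H$. To get ``exactly $6$'' you must still prove that the six resulting graphs are pairwise non-isomorphic. The paper does this by computing the length spectrum (the multiset of lengths of maximal non-branching paths) of each $G_i$, which separates all pairs except $G_5$ and $G_6$---precisely your two $s=0$ classes---and then separates those two by exhibiting an apartment of $G_5$ made entirely of length-$\pi/3$ paths, which $G_6$ lacks. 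This is not a formality: the extendability of isomorphisms of the punctured graphs to $H$ is exactly the kind of rigidity the paper has to establish separately later (Lemma \ref{l54}). Adding such a non-isomorphism check would complete your proof.
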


For each of them,  we study their basic properties and compute their local rank. Furthermore, these graphs  appear as links of CAT(0) complexes. This family is rich, even in the most transitive case:

\begin{theorem}
For each $i=1\ldots 6$, $i\neq 5$, there exists a CAT(0) simplicial complex $X$ with a free isometric action of $\G$ which is {\bf transitive on the vertices} of $X$, such that the vertex link of $X$ is isometric $G_i$. 
\end{theorem}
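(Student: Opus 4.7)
The plan is to build, for each $i\in\{1,2,3,4,6\}$, a compact $2$-dimensional simplicial complex $Y_i$ carrying a group action with a single vertex orbit and vertex link isomorphic to $G_i$, then pass to the universal cover.

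First, I would encode the desired local data as a triangle presentation in the spirit of Cartwright--Mantero--Steger--Zappa. The $14$ vertices of $G_i$ serve as oriented ``edge labels'' at the base vertex, equipped with a fixed-point-free involution reversing orientation; each $2$-cell to be glued is recorded as a cyclically ordered triple of such labels. The list of triples must be chosen so that the edges appearing in the resulting link at the base vertex are exactly the $18$ edges present in $G_i$ (and none of the three edges of $H\setminus G_i$). This is a purely combinatorial task, to be done case by case for $i=1,2,3,4,6$. From such a presentation one builds a $2$-complex $Y_i$ with a single vertex, a loop for each pair of $G_i$-vertices exchanged by the involution, and one triangle for each triple; by construction the link at that vertex is precisely $G_i$.

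Next, I would metrize each triangular face of $Y_i$ as a flat equilateral triangle with angles $\pi/3$, so that every edge in the link has length $\pi/3$. Since $G_i\subset H$ and the Fano incidence graph $H$ is bipartite of girth $6$, the graph $G_i$ also has girth $\geq 6$, hence the link has systole $\geq 2\pi$. Gromov's link condition then gives that $Y_i$ is locally CAT(0), so the universal cover $X=\widetilde Y_i$ is a CAT(0) simplicial complex. The deck group $\G=\pi_1(Y_i)$ acts properly, freely (no local isotropy at the base vertex) and transitively on the vertex set of $X$ (because $Y_i$ has a single vertex), which is the assertion of the theorem.

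The main obstacle is the first step: exhibiting, for each of the five graphs, an explicit family of triangles consistent with a fixed-point-free involution whose link at the base equals $G_i$. This is a small combinatorial search, but it is the substantive content; once such a presentation is written down, the verification that the link is exactly $G_i$, and the ensuing CAT(0) argument, are mechanical. The exclusion of $G_5$ reflects a genuine obstruction at this stage: for that particular configuration of three missing chambers no vertex-transitive triangle presentation exists (any attempted gluing forces either the reappearance of a missing edge or a violation of the involution structure), so the construction simply cannot be carried out, in contrast with the other five cases.
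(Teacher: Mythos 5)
Your overall framework --- build a compact polygonal complex with a single vertex whose link is $G_i$, metrize the faces by unit equilateral triangles, deduce local CAT(0)-ness from the girth bound $\mathrm{girth}(G_i)\geq 6$ inherited from $H$, and pass to the universal cover with $\G=\pi_1$ acting freely and vertex-transitively --- is the same as the paper's, and that part of your argument is sound. The problem is that the theorem is an existence statement and you have deferred its entire content: you never exhibit, for any of the five graphs, a list of triples (or faces) realizing $G_i$ as the link, nor do you give any argument that such a list must exist. ``A small combinatorial search, to be done case by case'' is not a proof that the search succeeds; your own remark that the analogous search fails for $G_5$ shows that success is not automatic and must be certified graph by graph. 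The paper's proof of this theorem \emph{is} the explicit list of complexes: $V_1$ (two unit triangles and one side-$2$ triangle) for $G_1$, $V_2$ (two triangles and a $2\times 1$ parallelogram) for $G_2$, $V_3$ (two triangles and two lozenges) for $G_3$, $V_4$ (a triangle, a lozenge and a trapeze) for $G_4$, and $V_6^0,V_6^1$ (three lozenges each) for $G_6$, each given by a labelled, oriented gluing together with a ``face chasing'' verification, as in Lemma \ref{l51}, that the quotient has a single vertex and link exactly $G_i$. Without at least one such explicit presentation per graph, nothing has been proved.

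A second, related point is that your pure triangle-presentation formulation hides the structure that makes the search tractable. The six valency-$2$ vertices of $G_i$ (the endpoints of the three deleted chambers) force certain edges of the complex to be incident to exactly two faces; equivalently, the maximal non-branching paths of $G_i$, recorded in its length spectrum, must appear as single corners of angle $2\pi/3$, $\pi$ or $4\pi/3$ of larger convex faces. This is why the paper works with lozenges, doubled triangles, parallelograms and trapezes: the length spectrum of $G_i$ dictates the admissible multiset of face shapes, reducing the problem to a short finite check of boundary labelings. (After inserting the diagonals and midlines, these polygonal complexes become exactly the one-vertex triangle presentations you describe, so the two formulations agree; but the paper's version comes with the data needed to actually write the examples down.) Finally, your parenthetical explanation of the failure for $G_5$ is both unsubstantiated and weaker than what the paper actually establishes --- Theorem \ref{th101} shows that \emph{no} simplicial complex whatsoever, vertex-transitive or not, has all vertex links isomorphic to $G_5$, by a local propagation argument around the apex of the pyramid --- so it should not be presented as if it were known.
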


The proof of this result goes as follows: by inspecting the links $G_1,\ldots, G_6$, one can infer a finite set of possibilities for the \emph{shapes} (which are unions of equilateral triangles) of the complexes $X$ we are looking for. These 
shapes can be assembled by studying what are the allowed combinatorics for gluing the boundaries together. 

We will see that some of the resulting simplicial complexes have {\bf mesoscopic rank} in the sense of \cite{rd}. This means that they contain (exponentially many in the radius) concentric balls which are flat but cannot be extended into a flat.

There is an exceptional graph, $G_5$, 
 which can be represented as follows (Fig.\ \ref{IP}) and that we call the inverse pyramid. 
It has an obvious dihedral symmetry of order 6, which, together with the symmetry of order 2 fixing the top vertex and one of its neighbour, generates the full automorphism group. 

\begin{figure}[htbp]
\begin{tikzpicture}[line join=round]
\draw(0,-1.6)--(.378,.363);
\draw(0,-1.6)--(-.891,-.035);
\draw(0,-1.6)--(.513,-.328);
\draw(.378,.363)--(-.513,.328);
\draw(.891,.035)--(.378,.363);
\draw(-.513,.328)--(-.891,-.035);
\draw(0,1.6)--(-.513,.328);
\draw(.513,-.328)--(.891,.035);
\draw(0,1.6)--(.891,.035);
\draw(-.891,-.035)--(-.378,-.363);
\draw(-.378,-.363)--(.513,-.328);
\draw(0,1.6)--(-.378,-.363);
\shade[ball color=black] (0,-1.6) circle (0.3ex);
\shade[ball color=black] (.378,.363) circle (0.3ex);
\shade[ball color=black] (-.891,-.035) circle (0.3ex);
\shade[ball color=black] (.513,-.328) circle (0.3ex);
\shade[ball color=black] (-.513,.328) circle (0.3ex);
\shade[ball color=black] (0,1.6) circle (0.3ex);
\shade[ball color=black] (-.378,-.363) circle (0.3ex);
\shade[ball color=black] (.891,.035) circle (0.3ex);
\end{tikzpicture}
\caption{The graph $G_5$} \label{IP}
\end{figure}
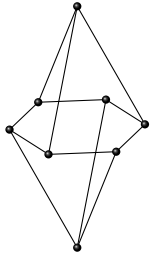

\noindent   For this graph the construction of a $V$ as above whose universal cover $X=\tilde V$ has all links isometric to $G_5$, as it turns out, is not possible:
 
\begin{proposition}
There is no simplicial complex $X$ whose link at every vertex is $G_5$.
\end{proposition}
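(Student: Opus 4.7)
Plan: I would proceed by contradiction: assume that $X$ is a simplicial complex with $L_v\cong G_5$ at every vertex $v\in X$. Fix a base vertex $v_0\in X$ and an identification $L_{v_0}\cong G_5$ labelling the eight neighbours of $v_0$ by the names $T,B,a_1,a_2,a_3,b_1,b_2,b_3$ of Figure \ref{IP}, so that the 12 triangles at $v_0$ are in bijection with the 12 edges of $G_5$.

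Then I would analyze the link at a neighbour of $v_0$, starting with the apex $T$. The three triangles at $v_0$ incident to the edge $\{v_0,T\}$ are $\{v_0,T,b_i\}$, $i=1,2,3$, so in $L_T\cong G_5$ the three neighbours of the vertex corresponding to $v_0$ are precisely $b_1,b_2,b_3$. The combinatorial rigidity of $G_5$ --- reflecting the fact that its automorphism group is only the order-twelve subgroup generated by the dihedral symmetry of order six and the extra involution fixing $T$ and one of its neighbours --- then strongly constrains where $v_0$ may sit inside $L_T$: the "apex vs.\ hexagonal" distinction forces $v_0$ to be placed at an apex of $L_T$. Repeating this at $B$ forces $v_0$ to be an apex of $L_B$ as well, while at each hexagonal neighbour $a_i,b_i$, the analogous calculation forces $v_0$ into a prescribed hexagonal position in that link.

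Finally, I would close a short loop. Pick a 4-cycle in $L_{v_0}$, say $T-b_1-a_1-b_2-T$, which corresponds to a disc of four triangles at $v_0$ whose boundary is a 4-cycle $T-b_1-a_1-b_2-T$ in the 1-skeleton of $X$. Following the forced labellings from Step 2 along this 4-cycle and tracking how the role of each vertex in the successive links is propagated, I would show that returning to $v_0$ produces a labelling of $L_{v_0}$ incompatible with the initial identification $\phi$. Concretely, the analysis would identify two triangles of $X$ that must share all three vertices but be combinatorially distinct, or equivalently an edge of $X$ that must belong to four triangles rather than three, contradicting the link hypothesis.

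The main obstacle is to pin down the precise closed walk and verify the monodromy argument rigorously. A subtler difficulty is to isolate exactly the feature of $G_5$ responsible for the failure, since the preceding theorem produces honest simplicial realisations for each of the five other graphs $G_i$, $i\neq 5$. The expected explanation is that the unique pair of ``apex'' vertices $\{T,B\}$ of $G_5$, together with its unusually small automorphism group, provides no room for the twists required to glue the local data coherently, unlike the other $G_i$ whose larger symmetry or different apex structure allows consistent assembly.
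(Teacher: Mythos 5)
Your proposal does not close, and the most serious problem is that it analyses the wrong graph. You treat $G_5$ as the $8$-vertex, $3$-regular graph of Figure \ref{IP}, with $12$ edges and hence $12$ triangles around each vertex. But $G_5$ is a spherical building of type $A_2$ and order $2$ with three chambers missing: a subgraph of the $14$-vertex incidence graph of the Fano plane with $18$ edges, six of whose vertices have valency $2$ (the picture suppresses them; this is why the length spectrum of $G_5$ is $\{({\pi/3},6),({2\pi/3},6)\}$ rather than twelve paths of length $\pi/3$). The distinction is not cosmetic: for a link of constant valency the statement would be \emph{false} --- the paper itself recalls that complexes with a prescribed constant-valency link always exist (cf.\ \cite{BS}) --- so any correct proof must exploit the valency-$2$ vertices, which your argument never sees. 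Beyond this, the step asserting that $v_0$ must occupy an apex of $L_T$ is not justified (the adjacencies among $b_1,b_2,b_3$ inside $L_T$ are additional gluing data, not determined by $L_{v_0}$), and the final contradiction is only conjectured in two alternative forms; you yourself flag that the monodromy computation along the $4$-cycle is not carried out.

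The paper's proof is purely local and needs no monodromy around a closed walk. Take a vertex $p_0$ and let $p_1$ be the neighbour corresponding to an apex of $L_{p_0}$. In $G_5$ all three neighbours of an apex have valency $2$ (the spokes are the length-$2\pi/3$ paths), so each of the three triangles $t_k$ on the edge $e=[p_0,p_1]$ has its other edge at $p_0$ contained in exactly two triangles. In $L_{p_1}$ the direction $u$ of $e$ has valency $3$, and every valency-$3$ vertex of $G_5$ has at least one valency-$2$ neighbour $v$; for the corresponding triangle $t_k$, the edge $[p_1,p_2]$ (with $p_2$ the third vertex of $t_k$) is then contained in exactly two triangles as well. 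Hence in $L_{p_2}$ the directions of $p_0$ and of $p_1$ are adjacent and both have valency $2$ --- impossible in $G_5$, where no maximal non-branching path exceeds length $2\pi/3$, i.e.\ no two valency-$2$ vertices are adjacent. These three facts about the degree sequence of $G_5$ are what must replace your ``apex vs.\ hexagonal'' dichotomy if you want to salvage the approach.
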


 We refer to  Sect.\ \ref{spher}, Fig.\ \ref{Fano-3}, for a representation of the above six buildings with 3 chambers missing. The riddle concerning the existence of polyhedra with links isomorphic to  $G_5$ is discussed in Section \ref{Sectinvert}. Associated with the inverse pyramid are weak forms of  buildings with chambers missing (see e.g. d' in Theorem \ref{th101}) which are interesting in their own right.

\subsection*{The Haagerup property}
 
 An important question concerns the status of the Haagerup property and Kazhdan's property T for groups of intermediate rank. Since the groups to start with are lattices in algebraic group of higher rank (over local fields) they have the Kazhdan property T, and it is interesting to see if the chambers  removal surgery can lead to groups with property T and groups with the ``opposite" property, namely the Haagerup  property (also known as Gromov's a-T-menability). Observe  that if we remove \emph{all} the chambers, then the resulting group is a free group, which does have the Haagerup property.

We give in Section \ref{Sectinvert} an explicit construction  of  a ``weak" building with chambers missing $(X,\G)$, where $\G$ has the Haagerup property  (see Theorem \ref{th-haagerup}):

\begin{theorem} The group $\G$ with 4 generators and the 3 relations defined by
\[
\G:=\langle x,y,z,t\mid xyz,\, xzy,\, xt^{-1}yt^{-1}zt^{-1}\rangle
\]
has the Haagerup property.
\end{theorem}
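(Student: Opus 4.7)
The plan is to establish the Haagerup property for $\G$ via a two-part strategy: first simplify the presentation and pass to a finite-index subgroup, then exhibit a proper action of that subgroup on a CAT(0) cube complex.

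From $xyz = xzy = 1$ one reads off $[y,z] = 1$ and $x = (yz)^{-1}$, so after eliminating $x$
\[
\G = \langle y, z, t \mid [y,z],\ t^{-1}yt^{-1}zt^{-1}(yz)^{-1}\rangle.
\]
Since the $t$-exponent in the hexagonal relation is $-3$, the assignment $y, z \mapsto 0,\ t \mapsto 1$ defines a homomorphism $\varepsilon \colon \G \to \Zb/3\Zb$. Write $K := \ker \varepsilon$, a subgroup of index $3$. The Haagerup property passes up and down through finite-index subgroups, so it suffices to prove it for $K$.

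A Reidemeister--Schreier computation with transversal $\{1, t, t^2\}$, setting $Y_i := t^i y t^{-i}$, $Z_i := t^i z t^{-i}$ for $i = 0, 1, 2$ and $T := t^3$, yields the ``flat'' relations $[Y_i, Z_i] = 1$ together with three rewritings of the hexagonal relation, which produce the equalities
\[
T \ =\ Y_2 Z_1 Z_0^{-1} Y_0^{-1} \ =\ Z_2 Z_1^{-1} Y_1^{-1} Y_0 \ =\ Y_2^{-1} Z_2^{-1} Y_1 Z_0.
\]
Using the first to eliminate $T$, the remaining pair of equalities becomes two ``gluing'' relations among the commuting pairs $\langle Y_i, Z_i\rangle \cong \Zb^2$. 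Thus $K$ is the fundamental group of a compact non-positively curved $2$-complex built from three flat tori with two $2$-cells glued in. The plan is then to exhibit a proper $K$-action on a CAT(0) cube complex---either by recognizing this structure as a tubular group and invoking Wise's cubulation criterion, or by directly constructing a $K$-invariant system of walls on the universal cover---and to conclude Haagerup by Niblo--Reeves, lifting back to $\G$.

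The main obstacle is the third step: the two gluing relations mix all three $\Zb^2$ subgroups simultaneously rather than pairwise, so $K$ is not strictly a tubular group in the graph-of-groups sense, and Wise's equitability condition must be verified by hand. A geometric alternative, more in line with the paper's setup, is to exploit the weak building with chambers missing $(X, \G)$ constructed in Section~\ref{Sectinvert}: build a $\G$-invariant wall structure directly on $X$ from the combinatorics of its links, and conclude Haagerup via the characterization of the property in terms of proper actions on spaces with measured walls.
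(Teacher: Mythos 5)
Your algebraic reduction is correct as far as it goes: $xyz=xzy=1$ does force $[y,z]=1$ and $x=(yz)^{-1}$, the surviving relator has $t$-exponent $-3$, and Reidemeister--Schreier over the index-$3$ kernel $K$ produces three commuting pairs $\langle Y_i,Z_i\rangle\cong\ZI^2$ plus gluing relations; since the Haagerup property passes up from finite-index subgroups, it would indeed suffice to treat $K$. But your argument stops exactly where the theorem begins. Knowing that $K$ is the fundamental group of a compact non-positively curved $2$-complex assembled from tori gives nothing toward a-T-menability: cocompact lattices on $\tilde A_2$ buildings are also fundamental groups of non-positively curved $2$-complexes containing many copies of $\ZI^2$, and they have property (T). You acknowledge yourself that $K$ is not tubular (the two gluing $2$-cells involve all three $\ZI^2$'s simultaneously, so there is no graph-of-groups structure with $\ZI$ edge groups), that Wise's criterion does not apply as stated, and that a wall system ``must be verified by hand'' --- but that verification is the entire content of the theorem, and it is absent. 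This is a genuine gap, not a proof.

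The paper takes precisely the geometric route you gesture at in your final sentence, and all the work lies in the step you leave open. On the universal cover $X$ of the triangle-and-hexagon complex of Section~\ref{Sectinvert} (all of whose vertex links are the inverse pyramid $G_5$), one builds, for each triangle $t$ and each of its three medians, a geodesic tree $U_t$ by propagating the median through the complex: at each vertex reached, Lemma~\ref{lem105} identifies inside $G_5$ the set of directions at distance exactly $\pi$ from the incoming direction along circles of length $2\pi$, and shows that deleting these points cuts $G_5$ into exactly two connected trees. This local separation property is what makes each $U_t$ globally separating: Lemma~\ref{HaagerupTriangle} and the lemma following it use it, together with geodesic triangles and a homotopy argument, to prove that $X\setminus U_t$ has exactly two components, and the resulting $\G$-invariant wall system satisfies the Haglund--Paulin finiteness and properness conditions (A) and (B), whence the Haagerup property. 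None of this is automatic: it hinges on a specific combinatorial feature of the link $G_5$ (every root has rank $3/2$; compare Proposition~\ref{rank3}), which your proposal, never engaging with the link geometry, does not touch.
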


The complex $X$ described on Figure \ref{fig-groupG} is  a CAT(0) complex with the isolated flat property (see  \cite{HK}) but it is not hyperbolic and  $\G$  contains $\ZI^2$. This properties are established in Section \ref{Sectinvert}. For the definition of  weak buildings with chambers missing, see \ref{weakbmiss}. To prove the Haagerup property, we construct explicit walls in $X$ and apply the classical criterion \cite{HP} of Haglund--Paulin. 
 
 \begin{figure}[h!]
\begin{tikzpicture}
\begin{scope}[xshift=-2.8cm,yshift=-.2cm]
\coordinate [label=left: {$\G=\langle 1,2,3,4\mid$}] (A1) at (0,0);
\end{scope}

\begin{scope}[xshift=-2.8cm,yshift=-.2cm]
\coordinate [label=left: {$\rangle$}] (A2) at (5.9,0);
\end{scope}

\begin{scope}[xshift=1.5cm,yshift=-.2cm,scale=.9]
\path (0:1cm) coordinate (P0);
\path (1*60:1cm) coordinate (P1);
\path (2*60:1cm) coordinate (P2);
\path (3*60:1cm) coordinate (P3);
\path (4*60:1cm) coordinate (P4);
\path (5*60:1cm) coordinate (P5);

\draw [-latex] (P0) -- node[anchor=west]{\tiny{1}}  (P1);
\draw [-latex] (P2) -- node[anchor=south]{\tiny{4}}  (P1);
\draw [-latex] (P2) -- node[anchor=east]{\tiny{2}}  (P3);
\draw [-latex] (P4) -- node[anchor=east]{\tiny{4}}  (P3);
\draw [-latex] (P4) -- node[anchor=north]{\tiny{3}}  (P5);
\draw [-latex] (P0) -- node[anchor=west]{\tiny{4}}  (P5);
\foreach \i in {0,...,5} \shade[ball color=black] (P\i) circle (0.3ex);
\end{scope}

\begin{scope}[xshift=-2.5cm,yshift=-.5cm,scale=.9]
\draw[-latex] (1,0) -- node[anchor=west]{\tiny{2}} (0.5,0.866);
\draw[-latex] (0.5,0.866) -- node[anchor=east]{\tiny{3}} (0,0);
\draw[-latex] (0,0) -- node[anchor=north]{\tiny{1}} (1,0);
\shade[ball color=black] (0,0) circle (0.3ex);
\shade[ball color=black] (1,0) circle (0.3ex);
\shade[ball color=black] (0.5,0.866) circle (0.3ex);
\end{scope}

\begin{scope}[xshift=-.9cm,yshift=-.5cm,scale=.9]
\draw[-latex] (1,0) -- node[anchor=west]{\tiny{3}} (0.5,0.866);
\draw[-latex] (0.5,0.866) -- node[anchor=east]{\tiny{2}} (0,0);
\draw[-latex] (0,0) -- node[anchor=north]{\tiny{1}} (1,0);
\shade[ball color=black] (0,0) circle (0.3ex);
\shade[ball color=black] (1,0) circle (0.3ex);
\shade[ball color=black] (0.5,0.866) circle (0.3ex);
\end{scope}
\end{tikzpicture}
\caption{The group $\G$ as a CAT(0) group}\label{fig-groupG}
\centerline{(the relations are read on the boundary of}
\centerline{the faces, respecting the orientation) }
\end{figure}

By way of contrast, it is found in \cite{random} that for ``most" of the buildings with chambers missing $(X,\G)$, the group $\G$ has the property T of Kazhdan.

\section{The definition of buildings with missing  chambers}\label{defs}

Let $X$ be a simplicial complex and let $\G$ be a group of automorphisms of $X$ acting freely with finite fundamental domain. 
We want to view $X$ as coming from a building $X'$ where some chambers have been removed equivariantly. This is done as follows:

\begin{definition}\label{bmiss}  We call $(X,\G)$  a \emph{building  with chambers missing} if there exist a Tits building $X'$, a group $\G'$ of automorphisms of $X'$ acting freely with finite fundamental domain,  a simplicial map $\lambda : X\to X'$, and chambers $C_1,\ldots, C_n$ in $X'$ such that: 
\begin{itemize}
\item[a)] $\G'C_i\neq \G'C_j$ and $\lambda(X)\cap \stackrel{\circ}{C_i}=\emptyset$ for $i\neq j\geq 1$
\item[b)] $\lambda(X)\cup  (\G'\stackrel{\circ}{C_1})\cup\ldots \cup (\G'\stackrel{\circ}{C_n})= X'$
\item[c)] $\lambda$ preserves orbits, in the sense that $x\sim_\G y\ssi \lambda(x)\sim_{\G'}  \lambda(y)$ for all $x,y\in X$.
\end{itemize}
\end{definition}

In the case that $X$ has non trivial boundary or is of non homogeneous dimension, we always add the mention ``building with chambers missing and with boundary". Otherwise $X$ is implicitly assumed to have no boundary and be of homogeneous dimension (equal to the dimension of $X'$).

\begin{remark}
 Depending on the context it can be interesting to generalize the scope of this definition. For instance one can allow proper actions, in order to include orbihedra, or insist on weaker versions on some of the  conditions a$\sim$c. 
  We won't need these generalizations here, except for some weakening of b) to be described in Definition \ref{weakbmiss}.
\end{remark}

A map $\lambda$ satisfying the conditions of Definition \ref{bmiss} is called an \emph{extension of $(X,\G)$ into a Tits building}, denoted in symbols 
\[
\lambda : (X,\G) \leadsto (X',\G').
\] 
 We call \emph{missing chamber} (resp. \emph{number of chambers missing}) of the extension $\lambda$ any chamber of $X'$ which doesn't belong to $\lambda(X)$  (resp., the integer $n$). 

The number of chambers missing of $(X,\G)$ is the minimal number of chambers missing taken over all possible extensions (this number is set to be zero if $X$ itself is a building). We say that two extensions $\lambda_0 : (X_0,\G_0) \leadsto (X_0',\G_0')$ and $\lambda_1 : (X_1,\G_1) \leadsto (X_1',\G_1')$ are isomorphic if there are equivariant isomorphisms  $\theta : X_0\to X_1$ and $\theta' : X_0'\to X_1'$ such that the following diagram commutes:

\centerline{\xymatrix{X_0 \ar@{<->}[d]^{\theta}\ar@{->}[r]^{\lambda_0}&X_0'\ar@{<->}[d]^{\theta'}\\X_1\ar@{->}[r]^{\lambda_1} &X_1'}}

\begin{definition}
We say that a building with chambers missing $(X,\G)$ is \emph{Euclidean} if $X$ is simply connected and if there exists an extension $\lambda : (X,\G) \leadsto (X',\G')$ into a Tits building, so that the building $X'$ is Euclidean.
\end{definition}

 In the one dimensional Euclidean case, namely when $X'$ is a tree, the space $X$ is a covering of a tree and in particular, is a tree itself. Therefore, buildings with chambers missing $(X,\G)$ in that case are merely buildings, and the resulting class of $\G$ consists only of {\bf free groups}. (Allowing more general  types of actions in Definition \ref{bmiss} would lead here to amalgamated free products \cite{serre}.)

The main result of the present section shows that Euclidean buildings with missing chambers are CAT(0) spaces if and only if the dimension equals 2 (here and below we do not distinguish here between complexes and their topological realization). 
This rises the problem of finding generalizations of these constructions to lattices in algebraic groups of rank $>2$ (possibly in some larger category than that of countable groups or countable groups acting on CAT(0) complexes). In particular, it would be interesting to find models that randomize these groups (producing {\bf random extensions} of lattices in algebraic groups of rank $>2$),  parallel to  the rank 2 case treated in \cite{random}.

\begin{theorem}\label{dim2}
Let $(X,\G)$ be a Euclidean building with at least one  missing chamber, let $\lambda : (X,\G) \leadsto (X',\G')$ be an extension, and consider the natural piecewise linear metric $d$ on $X$ obtained by pulling back the CAT(0) metric on $X'$. The following are equivalent:
\begin{enumerate}
\item the complex $X$ is of dimension 2;
\item the complex $X$ is contractible;
\item the metric space $(X,d)$ a CAT(0) space. 
\end{enumerate}
Furthermore in that case, the group $\G$ is a group of isometries of $(X,d)$.
\end{theorem}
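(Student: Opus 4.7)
The plan is to prove the cycle $(1)\Rightarrow (3)\Rightarrow (2)\Rightarrow (1)$. As a preliminary, I would unpack what an extension $\lambda$ gives metrically. Setting $Y := X'\setminus \G'(\stackrel{\circ}{C_1}\cup\ldots\cup\stackrel{\circ}{C_n})$, axiom (b) gives $\lambda(X)=Y$; combined with the orbit--preservation axiom (c) and the freeness of the $\G$-action, one deduces that $\lambda\colon X\to Y$ is a simplicial covering (non--degenerate on each simplex, otherwise distinct $\G$-translates of a simplex would be identified, contradicting (c)). Since $X$ is simply connected in the Euclidean case, $\lambda$ is the universal cover of $Y$; in particular $Y$ is connected, which forces $\dim X\geq 2$ (in dimension $1$ removing an orbit of open edges from the tree $X'$ would disconnect it). The pulled--back metric $d$ makes every chamber of $X$ isometric to a standard chamber of $X'$, and the deck action of $\G$ automatically preserves this metric; this already proves the final isometry assertion of the theorem.

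For $(1)\Rightarrow (3)$, I would invoke the standard local-to-global theorem of Cartan--Hadamard/Gromov--Ballmann: a simply connected piecewise Euclidean $2$-complex is CAT(0) if and only if every vertex link is CAT(1), equivalently is a metric graph of girth $\geq 2\pi$. Since $\lambda$ is locally a homeomorphism, $\mathrm{Lk}_X(v)$ embeds as a subgraph of $\mathrm{Lk}_{X'}(\lambda(v))$, obtained by deleting the open edges dual to the chambers removed at $\lambda(v)$. But $\mathrm{Lk}_{X'}(\lambda(v))$ is a thick rank $2$ spherical building whose apartments are circles of length $2\pi$, so its girth is exactly $2\pi$; erasing open edges cannot decrease the girth, so $\mathrm{Lk}_X(v)$ still has girth $\geq 2\pi$ and is CAT(1). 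With $X$ simply connected, $(3)$ follows by Cartan--Hadamard. The implication $(3)\Rightarrow (2)$ is the standard fact that CAT(0) spaces are contractible via geodesic retraction.

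The main work is the contrapositive of $(2)\Rightarrow (1)$, namely showing that $\dim X\geq 3$ prevents contractibility. When $\dim X=n\geq 3$, removing top-dimensional cells of $X'$ does not alter its $2$-skeleton, so $Y$ is simply connected; hence $X=Y$ and it suffices to find non--trivial $(n-1)$-th homology in $Y$. I would use the cofibration
\[
Y\hookrightarrow X'\longrightarrow X'/Y.
\]
The quotient $X'/Y$ crushes the boundary of each removed open chamber to a single point, so $X'/Y\simeq\bigvee S^n$ indexed by the removed chambers. Contractibility of $X'$ then gives $Y\simeq\bigvee S^{n-1}$, so in particular $H_{n-1}(X)=H_{n-1}(Y)\neq 0$, and $X$ cannot be contractible.

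The subtlest point I expect is the very first preliminary: extracting from axioms (a)--(c) the identification of $\lambda$ with a (universal) covering of $Y$, on which every other step depends. Once this identification is secured, the implication $(1)\Rightarrow(3)$ is a direct application of the link criterion, using only that the vertex links of a rank $2$ Euclidean building have girth exactly $2\pi$, and the implication $\neg(1)\Rightarrow \neg(2)$ reduces to routine algebraic topology via the cofiber sequence above.
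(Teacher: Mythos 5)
Your proposal is correct and follows essentially the same route as the paper: identify $\lambda$ as the universal covering of the subcomplex $X'_\bullet=\lambda(X)$, prove $(1)\Rightarrow(3)$ via the girth-$\geq 2\pi$ link criterion plus Cartan--Hadamard, and prove $\neg(1)\Rightarrow\neg(2)$ by noting that in dimension $\geq 3$ the $2$-skeleton is untouched so $X\cong X'_\bullet$, which is not contractible. Your cofiber-sequence computation just makes explicit the paper's one-line remark that the boundary spheres of the missing chambers are not null-homotopic (only the conclusion $H_{n-1}(Y)\neq 0$ is needed; the stronger claim $Y\simeq\bigvee S^{n-1}$ is an unnecessary desuspension).
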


\begin{proof}
As already mentioned in the one dimensional case  $X$ is a tree and in particular a Euclidean building. Therefore if $(X,\G)$ has at least one  missing chamber, then $\dim X\geq 2$.

We let $X_\bullet'\subset X'$ be the image of $\lambda$, namely,
\[
X_\bullet'=X\backslash  \left(\bigcup_{i=1}^n \G'  \stackrel{\circ}{\C_i}\right)
\]
where $C_1,\ldots C_n$ are the missing chambers. Consider the complexes $V=X/\G$ and $V'=X'/\G'$.  By condition c) of Definition \ref{bmiss}, $\lambda$ factorizes to an injective quotient map $\underline \lambda : V\to  V'$.    Fix a vertex $*$ in $V$ with respect to which fundamental groups are computed, and let 
\[
p_\lambda : \G\to \G'
\]
be the morphism between the respective fundamental groups $\G=\pi_1(V,*)$ of $V$ and $\G'=\pi_1(V',\underline\lambda(*))$ of $V'$ induced by $\underline \lambda$.   Note that the map 
\[
\lambda : X\surj X_\bullet'
\] 
is a covering map. The space $X$, being  simply connected,  is  the universal cover of $X_\bullet'$.

Let us show that (2) implies (1). Assume that $\dim X=\dim X'\geq 3$. Then $X_\bullet'$ is also simply connected, since $X'$ is contractible and the fundamental group of a space depends only upon its 2-skeleton.  
Therefore we have an homeomorphism
\[
X\simeq  X_\bullet'.
\] 
Since $X_\bullet'$ is not contractible (the boundary of a missing chamber is not homotopic to 0), this proves that (2) implies (1).

Let us now assume (1), i.e.\ $\dim X=2$, and show (3). 
Since the missing chambers of the extension $\lambda : (X,\G)\leadsto (X',\G')$ are of dimension $2$, the morphism $p_\lambda : \G\to \G'$ induced by $\lambda$ is surjective. More precisely, the missing  chambers $C_1,\ldots,C_n$ projects to topological disks $D_1,\ldots,D_n$ of $V'$, and if we choose a loop in $V$ from the base point $*$ around each of these (missing) disks, we get a generating set for a subgroup $N$ of $\pi_1(V,*)$, whose normal closure $\langle N\rangle$ satisfies 
\[
\pi_1(V',\underline\lambda(*))=\pi_1(V,*)/\langle N\rangle
\] 
i.e.\ we have an exact sequence

\everyentry={\vphantom{\langle N\rangle}}
\centerline{\xymatrix{1\ar[r]&\langle N\rangle\ar[r]& \G\ar[r]_{p_\lambda}&\G'\ar[r]& 1}}

\noindent which corresponds to the diagram:

\everyentry={\vphantom{X_\bullet'}}
\centerline{\xymatrix{X=\tilde V\ar[dd]_\G\ar[rrd]^{\lambda} \ar[rd]_{\langle N\rangle}&&\\ & {X_\bullet'\mbox{ }}\ar@{->}[r]\ar[ld]^{\G'} & X'=\tilde V'\ar[ld]^{\G'} \\
 {V\mbox{ }}\ar@{->}[r]_{\underline\lambda}&V' }}

\noindent where the horizontal arrows are factorizations of  $\lambda$. 
The metric $d$ on $X$ is given as usual by endowing each face of $X$ with the piecewise linear metric associated to the extension $\lambda$. Since the quotient by $\langle N\rangle$ is a covering map, the links of $X$ are isometric to those of $X_\bullet'$. Here both links are endowed with the angular metric associated to the Euclidean metric on faces. Therefore links of $X$ have girth at least $2\pi$. It follows that $X$ is locally CAT(0) for its natural length structure, and by the Hadamard-Cartan theorem, that $X$ a CAT(0) space. Thus (1) implies (3). 

That (3) implies (2)  is obvious, and that $\G$ acts by isometry follows from the piecewise linear definition of the metric in $X$.
\end{proof}

Unless otherwise specified we assume Euclidean buildings with chambers missing to be of dimension 2.

\begin{remark}\label{rem:nondegen} \begin{enumerate}
\item In the proof above, the assumption that $X'$ is a building was only used  to pull back the CAT(0) metric, and therefore the result generalizes to more general extensions into CAT(0) spaces (namely, to the case where the building $X'$ in Definition \ref{bmiss} is replaced by a finite dimensional CAT(0) space without boundary with underlying simplicial structure; for example, by a space of rank \sq\ in the sense of \cite{rd}). In many cases this gives, using the models of random groups presented in \cite{random}, a \emph{randomization of the corresponding class of groups} (resulting in random extensions of these groups), similar to the randomization of groups acting on buildings described in \cite{random}.

\item It follows  immediately from \ref{dim2}  that there is a canonical \emph{quasi-isometry class} of CAT(0) metrics on a building with chambers missing, which is given their extensions into Euclidean buildings (and therefore that notions such as hyperbolicity have an intrinsic meaning for a Euclidean building with missing chamber).
\end{enumerate}  
\end{remark}

\begin{definition}
We say that an extension $\lambda : (X,\G) \leadsto (X',\G')$ of a building with chambers missing $(X,\G)$  is \emph{non-degenerate} if there exists an apartment of $X'$ which contains no missing chamber.  
\end{definition}

By Remark \ref{rem:nondegen}, either all the extension of $(X,\G)$ are non degenerate or all are degenerate. We now show that  the Coxeter types actually coincide in the former case. 

\begin{lemma}\label{lem12} Let $(X_0,\G_0)$ and $(X_1,\G_1)$ a Euclidean buildings with chambers missing, and assume that we have two non-degenerate extensions  $\lambda_0 : (X_0,\G_0) \leadsto (X_0',\G_0')$ and $\lambda_1 : (X_1,\G_1) \leadsto (X_1',\G_1')$. If $X_1$ and $X_2$ are simplicially isomorphic, then $X_1'$  and $X_2'$ have the same Coxeter diagram.  
\end{lemma}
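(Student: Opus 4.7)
The plan is to extract the Coxeter type of $X_i'$ from a purely simplicial invariant of $X_i$, then invoke the hypothesis that $X_0 \cong X_1$ simplicially.

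First, using non-degeneracy, I would pick for each $i$ an apartment $A_i \subset X_i'$ containing no missing chamber. Being simply connected and contained in $\lambda_i(X_i)$, the apartment lifts through the covering map $\lambda_i\colon X_i \to \lambda_i(X_i)$ to a simplicially embedded subcomplex $\hat A_i \subset X_i$ that is simplicially isomorphic to the rank-$2$ affine Coxeter complex of $X_i'$.

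Next, I would observe that the three possible rank-$2$ affine Coxeter complexes $\tilde A_2, \tilde C_2, \tilde G_2$ are combinatorially distinguishable. For example, the multiset of vertex degrees appearing in the planar tessellation is $\{6\}$ for $\tilde A_2$, $\{4,8\}$ for $\tilde C_2$, and $\{4,6,12\}$ for $\tilde G_2$. Equivalently, these are the girths of the spherical vertex-links of $X_i'$, determined by the labels of the Dynkin diagram.

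The key step is to promote this to a simplicial invariant of $X_i$ alone. For every vertex $v \in \hat A_i$, the link $\mathrm{Lk}_{X_i}(v)$ contains the apartment cycle at $\lambda_i(v)$, whose length equals the girth of the spherical link $\mathrm{Lk}_{X_i'}(\lambda_i(v))$; at the same time $\mathrm{Lk}_{X_i}(v)$ sits as a subgraph of that same spherical building, so its girth is at least the spherical girth. These two bounds coincide and pin the girth of $\mathrm{Lk}_{X_i}(v)$ to exactly the spherical girth at $\lambda_i(v)$. Hence for each vertex type of $X_i'$, the complex $X_i$ admits a vertex whose link has girth equal to the corresponding spherical girth, and the collection of such girths is a simplicial invariant transported verbatim by the isomorphism $X_0 \cong X_1$.

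The main obstacle is to verify that this collection of \emph{forced} girths actually pins down the Coxeter type uniquely: a priori, removing chambers elsewhere in $X_i$ could produce vertex links of larger, accidental girth, and one must rule out that two Coxeter types give rise to the same simplicial invariant. This is handled first by observing that the minimum vertex-link girth in $X_i$ equals the minimum spherical girth of $X_i'$, so that the value $6$ singles out $\tilde A_2$ against the value $4$ shared by $\tilde C_2$ and $\tilde G_2$; and then, for the $\tilde C_2$ versus $\tilde G_2$ separation, by using finer combinatorial data of $\hat A_i$ itself --- specifically the presence of simplicial disks of $12$ triangles attached at a single vertex, which is forced by the $12$-valent apartment vertices of $\tilde G_2$ but ruled out in $\tilde C_2$ where apartment flowers have at most $8$ triangles, and this property passes through the simplicial isomorphism.
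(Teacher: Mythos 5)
There is a genuine gap in your final step, the separation of $\tilde C_2$ from $\tilde G_2$. Your proposed invariant is ``a simplicial disk of $12$ triangles attached at a single vertex'', which combinatorially is nothing more than a $12$-cycle in the link of that vertex. You claim this is ruled out in the $\tilde C_2$ case because apartment flowers there have at most $8$ triangles, but that only bounds the cycles lying inside a \emph{single} apartment. In a \emph{thick} building of type $\tilde C_2$ the link of a non-right-angle vertex is a thick generalized quadrangle, and such incidence graphs contain cycles of every even length from $8$ up to well beyond $12$ (already in the smallest case $GQ(2,2)$ one writes down a $12$-cycle explicitly on six duads and six synthemes). Each such $12$-cycle in the link is exactly a disk of $12$ triangles around the vertex, and removing a few equivariant orbits of chambers cannot be assumed to destroy all of them. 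So the obstruction you rely on does not exist, and the two types are not separated by your invariant. More generally this illustrates why the ``extract a combinatorial invariant of $X_i$ alone'' strategy is delicate here: removing chambers only \emph{increases} link girths, so the girth spectra of the three types can overlap after surgery (your minimum-girth argument does correctly isolate $\tilde A_2$, since $6$ versus $4$ survives, but nothing analogous is offered that actually works for $8$ versus $\{6,12\}$).

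For comparison, the paper's proof shares your first step (lift a chamber-free apartment $A_0$ of $X_0'$ to a subcomplex $\tilde A_0$ of $X_0$ via the covering $\lambda_0$), but then avoids invariants altogether: it pushes $\rho(\tilde A_0)$ forward along the covering $X_1\to\lambda_1(X_1)\subset X_1'$, obtaining a subcomplex of $X_1'$ simplicially isomorphic to a two-dimensional Euclidean Coxeter complex, and invokes the standard building-theoretic fact that such a subcomplex is an apartment of $X_1'$ (for the complete apartment system). The Coxeter diagrams then agree because the two buildings share an isomorphic apartment. If you want to complete your argument, the cleanest repair is exactly this: transport the apartment itself rather than a numerical shadow of it.
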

\begin{proof} 
Let $\rho : X_1\to X_2$ be a simplicial isomorphism between $X_1$ and $X_2$. By assumption some apartment $A_i$ of $X_i'$ (where we endow the buildings $X_i'$ with their maximal family of apartments) is included in the image $(X_i')_\bullet$ of $\lambda_i$.  Since apartments are contractible, we can choose a pull-back $\tilde A_1$ of $A_1$ under the covering map $X_1\to (X_1')_\bullet$ which is simplicially isomorphic to $A_1$. The image of $\rho(\tilde A_1)$ under the projection $X_2\to (X_2')_\bullet$ provides us with a simplicial complex $A_1'$ of $(X_2')_\bullet$ and therefore of $X_2'$. The complex $A_1'$ is simplicially isomorphic to a Euclidean Coxeter complex of dimension 2. It follows that $A_1'$ is an apartment of the Euclidean buildings $X_2$, and hence that $X_1'$ and $X_2'$ have the same Coxeter diagram. 
\end{proof}

\begin{definition}
We say that a Euclidean building with chambers missing $X$  is \emph{non-degenerate} if it admits a non-degenerate extension into a Tits buildings.
To such a complex is attached a unique Coxeter diagram (by Lemma \ref{lem12}), called the Coxeter diagram of $X$.   
\end{definition}

The following corollary shows that non-degenerate Euclidean buildings with chambers missing have a \emph{canonical CAT(0) metric} (as opposed to a quasi-isometry class of metric), which is induced by any extension into a Euclidean building. All buildings with chambers missing are endowed with this metric.

\begin{corollary}
Let $(X,\G)$ be a non-degenerate Euclidean buildings with chambers missing of dimension 2. Then there is one and only one piecewise Euclidean metric on $X$ such that any extension $(X,\G) \leadsto (X',\G')$ into a Euclidean building induces an isometry on faces.
\end{corollary}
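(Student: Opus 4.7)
The plan is to establish existence and uniqueness separately, with the bulk of the work concentrated in showing that the pullback metric does not depend on the choice of extension.

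Uniqueness is immediate: if $d$ is any piecewise Euclidean metric on $X$ satisfying the stated property, then fixing any extension $\lambda\colon(X,\G)\leadsto(X',\G')$ (which exists by non-degeneracy), the restriction of $d$ to each face $F$ of $X$ is forced to equal the pullback by $\lambda|_F$ of the canonical Euclidean metric on the chamber $\lambda(F)\subset X'$. This pins $d$ down.

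For existence, I would define $d$ by transporting the canonical Euclidean structure from a fixed extension $\lambda$ face by face, as in the proof of Theorem \ref{dim2}; the resulting $d$ is piecewise Euclidean and the $\G$-action is isometric because $\lambda$ is $\G$-equivariant through $p_\lambda$. The substantive content is that $d$ is independent of the extension. Given two extensions $\lambda_0$ and $\lambda_1$, Lemma \ref{lem12} applies by non-degeneracy and yields a common Coxeter diagram $\Delta$ for $X_0'$ and $X_1'$, so every chamber in either building is isometric to the model Coxeter chamber $C_\Delta$. For each face $F$ of $X$, the simplicial bijection $\phi_F:=\lambda_1|_F\circ(\lambda_0|_F)^{-1}$ identifies two copies of $C_\Delta$, and it remains to verify that $\phi_F$ is an isometry.

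This last verification is the main obstacle. I would attack it by comparing the type functions on $X$ obtained by pulling back, via $\lambda_0$ and $\lambda_1$, the canonical vertex-type functions on $X_0'$ and $X_1'$; these two type functions on $X$ must be related by an automorphism of the Coxeter diagram $\Delta$, and such a diagram automorphism induces a Euclidean isometry of $C_\Delta$. In the rank-$2$ setting considered here the verification can also be done by direct inspection, since the only possible diagrams are $\tilde A_2$, $\tilde C_2$, $\tilde G_2$, and in each case the group of simplicial automorphisms of the Coxeter chamber coincides with the isometry group of the underlying equilateral, right isosceles, or $30$-$60$-$90$ triangle. Granted this, the metrics induced by $\lambda_0$ and $\lambda_1$ agree on every face, the common metric $d$ is well-defined, and the corollary follows.
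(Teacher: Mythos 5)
Your proposal is correct and follows essentially the same route as the paper: the paper's proof is exactly ``Lemma \ref{lem12} plus the fact that a simplicial isomorphism of Coxeter complexes is an isometry for the corresponding CAT(0) metric,'' and your type-function/diagram-automorphism argument is just a careful justification of that second fact. One small point: your case list omits the reducible diagram $\tilde A_1\times\tilde A_1$ (square chambers), which the paper includes, though your general argument covers it anyway.
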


\begin{proof}
This is a direct application of Lemma \ref{lem12}, and the fact that a simplicial isomorphism of Coxeter complexes is an isometry for their corresponding CAT(0) metric.  Concretely, the faces of $X$ have the following shapes of the Euclidean plane: a square in the $\tilde A_1\times \tilde A_1$ case, an equilateral triangle in the $\tilde A_2$ case, a $(2,4,4)$ right-angled triangle in the $\tilde B_2$ case,
 a $(2,3,6)$ right-angled triangle in the $\tilde G_2$ case.
\end{proof}

\begin{remark}
We will see later (cf. Section \ref{sec2}) that in the degenerate case,  there exist buildings with chambers missing for which two natural length structures metrics coexist, namely, one where all faces are equilateral triangle, and the other one where all faces are $(2,4,4)$ triangles. In our examples, however, only one of this metric will be CAT(0) (although both are hyperbolic).   
\end{remark}

Finally we note that, in the Euclidean case, degeneracy is synonymous to hyperbolicity:

\begin{corollary}
A Euclidean building with chambers missing of dimension 2 is degenerate if and only if it is hyperbolic.  
\end{corollary}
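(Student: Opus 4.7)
The plan is to prove the equivalence using the flat plane theorem together with the fact that flat planes in a Euclidean building lie in apartments.

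For the easy direction (non-degenerate implies not hyperbolic), I would start from a non-degenerate extension $\lambda : (X,\G) \leadsto (X',\G')$, which by definition provides an apartment $A\subset X'$ containing no missing chamber. Thus $A\subset \lambda(X)=(X')_\bullet$. Since $A$ is simply connected (isometric to $\RI^2$) and $\lambda : X \surj (X')_\bullet$ is a covering map by Theorem \ref{dim2}, the apartment $A$ lifts through $\lambda$ to a subcomplex of $X$ isometric to $\RI^2$. The presence of this 2-flat, in a CAT(0) space admitting a cocompact action, prevents $X$ from being Gromov hyperbolic.

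For the converse (not hyperbolic implies non-degenerate) I would apply the flat plane theorem: since $\G$ acts properly and cocompactly by isometries on the CAT(0) space $X$, the failure of hyperbolicity yields an isometrically embedded $F\cong \RI^2$ inside $X$. Next I would argue that $\lambda|_F : F\to X'$ is a global isometric embedding. It is a local isometry, because $\lambda$ restricts to an isometry on faces by construction of the pulled-back metric; and since $X'$ is CAT(0), any local geodesic is a geodesic, so the image of a geodesic of $F$ under $\lambda$ is a geodesic of $X'$ of the same length. Hence $\lambda(F)$ is an isometric copy of $\RI^2$ sitting in the Euclidean building $X'$. A classical fact about Euclidean buildings says every flat plane is contained in some apartment, so $\lambda(F)\subset A$ for some apartment $A$ of $X'$; as $\lambda(F)$ and $A$ are both isometric to $\RI^2$, the inclusion is an equality. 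Then $A=\lambda(F)\subset (X')_\bullet$, so $A$ contains no missing chamber, and the extension $\lambda$ is non-degenerate.

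The main obstacle is the passage from a flat plane in $X$ to a full apartment of $X'$ containing no missing chamber, and it splits into two points: upgrading $\lambda|_F$ from a local to a global isometric embedding, and upgrading the inclusion $\lambda(F)\subset A$ to an equality. The first point uses the CAT(0) uniqueness of geodesics in $X'$, while the second uses the characterization of apartments as maximal 2-flats in a two-dimensional Euclidean building. Once these are in hand, the two implications close up cleanly using the flat plane theorem.
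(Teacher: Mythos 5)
Your argument is correct and is essentially the paper's own proof, which simply invokes the no-flat criterion together with the fact that apartments of the maximal system coincide with the two-dimensional flats of $X'$. The only point worth tightening is the claim that $\lambda|_F$ is a local isometry into $X'$: being an isometry on faces is not enough at points of the $1$-skeleton, where one should check that the $2\pi$-circle in the link determined by $F$ remains isometrically embedded in the larger link of $X'$ after the missing chambers are restored (a standard girth-$\geq 2\pi$ argument), after which the CAT(0) local-to-global step closes the proof as you describe.
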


\begin{proof}
This is straightforward by the no flat criterion \cite{BH} and the fact that apartments (of a maximal system of apartments) coincide with flat subspaces of dimension 2.
\end{proof}

\section{The spherical case}\label{spher}

We now turn to the spherical case.   
Our objective is the classification certain building with chambers missing of type $A_2$, namely buildings with \emph{three chambers missing} (Proposition \ref{fact6prime}). 
In the spherical case, our standing freeness assumption implies that the acting group $\G$ is trivial, so we abbreviate the notation $(X,\G)$ to $G$. 

\begin{definition}\label{bmissph} A  finite simplicial complex $G$ is said to be a \emph{spherical building with chambers missing} if there is an extension $G\leadsto G'$, in the sense of Definition \ref{bmiss}, where the Tits building $G'$ is spherical. Unless otherwise mentioned we assume that $G$ has no boundary.
 \end{definition}

\begin{remark}
Again, more general situations involving finite groups acting on finite complexes, or even infinite groups acting on general spherical Tits buildings (boundaries),  could be considered. What we study in this paper are vertex links obtained from chambers surgery in Euclidean buildings.
\end{remark}

If $(X,\G)$ is a Euclidean building with chambers missing, then the vertex links of $X$ are a spherical building with (possibly no) chambers missing (and possibly with boundary if $(X,\G)$ is so). Contrary to the case of buildings, the converse is not true; for a counterexample, see Section \ref{locglob}. In some circumstances however, one can give concrete conditions under which it holds (see Theorem \ref{th:loccrit}).

 As in the Euclidean case, if a building with missing chamber  $G$ is non-degenerate, then it has a Coxeter diagram, according to the following fact: If $G$ admits non-degenerate extensions into Tits buildings  $G\leadsto G_i'$, $i=0,1$, then $G_0'$  and $G_1'$ have the same Coxeter diagram.  The proof is similar to its Euclidean analog.

\begin{remark} 
\begin{enumerate}
\item  Degenerate extensions of buildings with chambers missing may have different Coxeter diagram. For instance, remove a set of chambers in a finite building of dimension 1, say of type $A_2$, and let $G$ be the corresponding graph. This is spherical building with chambers missing. Furthermore, being bipartite, this graph embeds into the complete bipartite graph. This embedding is degenerate as soon as $G$ is non-degenerate.      
\item Let $(X,\G)$ be a Euclidean building with chambers missing. Assume that $\G$ is transitive on vertices and let $L$ be the link of $X$. Then $L$ is a building with at least 3 chambers missing.
\end{enumerate}
\end{remark}

Recall that there is a unique building of type $A_2$ and order 2, namely, the incidence graph $H$ of the Fano plane. We recall that every building of type $A_2$ is associated to a projective plane (possibly exotic), whose order is called the \emph{order} of the building.  For a building with missing chamber $G$ of type $A_2$, we call order of $G$ the order of an extension into a building of type $A_2$ (this doesn't depend on the choice of the extension, compare Section \ref{locglob}). 
We now classify the buildings of type $A_2$ and order 2 with few chambers missing.

\begin{proposition}\label{fact6prime} Up to isomorphism there exist exactly:
\begin{enumerate}
\item one spherical building of type $A_2$, order 2, and a single chamber missing;
\item two spherical buildings of type $A_2$, order 2, and two chambers missing;
\item six spherical buildings of type $A_2$, order 2, and three chambers missing.
\end{enumerate}
\end{proposition}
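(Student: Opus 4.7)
The plan is to identify the unique spherical building of type $A_2$ and order $2$ with the Heawood graph $H$, i.e.\ the incidence graph of the Fano plane $\mathrm{PG}(2,\F_2)$. Its full automorphism group $\Aut(H)\cong \PGL_2(7)$ has order $336$: the index-$2$ subgroup $\PGL_3(\F_2)$ acts by collineations, and the other coset is realized by the duality swapping points and lines. A chamber of $H$ is an edge, i.e.\ a flag $(P,L)$ with $P\in L$, and $H$ has $21$ chambers in a single $\Aut(H)$-orbit. By the isomorphism convention for extensions given in Definition \ref{bmiss}, a spherical building of type $A_2$ and order $2$ with $k$ missing chambers and no boundary corresponds, up to isomorphism, to an $\Aut(H)$-orbit of a $k$-element edge subset $S\subset E(H)$ for which $H\setminus S$ has all vertices of degree $\geq 2$. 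Since $H$ is $3$-regular, this forces $S$ to be a matching, so the problem reduces to enumerating $\Aut(H)$-orbits of $k$-matchings in $H$ for $k=1,2,3$.

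The case $k=1$ is immediate from edge-transitivity of $\Aut(H)$. For $k=2$, a pair $\{(P_1,L_1),(P_2,L_2)\}$ of disjoint flags is classified by the incidence count $n=\#\{(i,j):i\neq j,\ P_i\in L_j\}$; the value $n=2$ is excluded since $P_1\in L_2$ and $P_2\in L_1$ would together force $L_1=L_2$. The values $n=0$ and $n=1$ give two manifestly distinct $\Aut(H)$-orbits (``non-incident'' and ``cross-incident''), each a single orbit by transitivity of $\PGL_3(\F_2)$.

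For $k=3$, identify a $3$-matching with a triple of flags $(P_i,L_i)_{i=1,2,3}$ with distinct points and distinct lines, and again let $n=\#\{(i,j):i\neq j,\ P_i\in L_j\}$. The impossibility of $P_i\in L_j$ and $P_j\in L_i$ holding together means that the off-diagonal $3\times 3$ pattern $(m_{ij})_{i\neq j}$, with $m_{ij}=1\iff P_i\in L_j$, encodes a loopless directed graph on $\{1,2,3\}$ with no $2$-cycles. Combined with Fano-plane geometry, the feasible configurations split as follows. For $n=0$ the matching is characterized by the two booleans ``$\{P_i\}$ collinear'' and ``$\{L_j\}$ concurrent'', and of the four combinations the ``non-collinear $+$ non-concurrent'' one is \emph{empty}: in $\F_2^3$ the unique line through $[v_i]$ missing $[v_j]$ and $[v_k]$ is $\langle v_i, v_j+v_k\rangle$, and the three such lines always meet at $[v_1+v_2+v_3]$ (the Fano analogue of Ceva's theorem). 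For $n=1$ the only feasible type requires non-collinear points. For $n=2$ there are two combinatorial types: either the two cross-incidences share a target line (``converging'', forcing the three points collinear on it) or dually a source point (``diverging'', forcing the three lines concurrent at it); or they form a path (``traversing'', requiring non-collinear points and non-concurrent lines). For $n=3$ the only feasible type is the cyclic tournament, realized by the three sides of a non-collinear triangle; the other combinatorial option, the acyclic tournament, is excluded because it would force two of the three lines to share two of the three points, and hence to coincide. Each feasible type yields a single $\PGL_3(\F_2)$-orbit by transitivity on lines, on collinear triples of points, and on non-collinear triples of points. Finally, the duality in $\PGL_2(7)$ swaps ``collinear'' with ``concurrent''; thus in $n=0$ it fuses the ``coll.$+$non-conc.'' and ``non-coll.$+$conc.'' orbits (the ``coll.$+$conc.'' orbit being self-dual), and in $n=2$ it fuses ``converging'' and ``diverging''. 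This yields $2+1+2+1=6$ orbits in total.

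The main technical obstacle lies in the $k=3$ case: the enumeration of possible cross-incidence patterns and the case-by-case feasibility analysis, with the Fano-cevian concurrency being the crucial geometric input that brings the naive count of $7$ sub-orbits down to $6$. As a sanity check, the orbit sizes sum to $28+56+168+168+168+56=644$, which matches the direct count of $3$-matchings in the Heawood graph.
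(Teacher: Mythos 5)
Your enumeration of $\Aut(H)$-orbits of $3$-matchings is correct and, at the level of the sub-case analysis, genuinely different from the paper's. The top-level decomposition is in fact the same in disguise: your cross-incidence count $n$ equals $6$ minus the sum of the pairwise distances between the three edges, so your cases $n=3,2,1,0$ are exactly the paper's cases $(1,1,1)$, $(1,1,2)$, $(1,2,2)$, $(2,2,2)$. But where the paper works inside apartments of $H$ and invokes ``easy symmetries,'' you work in $\PG(2,\F_2)$ directly, and this buys two real things: the Fano--Ceva concurrency argument cleanly explains \emph{why} the $(2,2,2)$ case yields only two orbits rather than three (the paper just asserts that two of three subcases are swapped by a symmetry flipping $A$), and the orbit-size bookkeeping $28+56+168+168+168+56=644=m_3(H)$ is a genuine certificate that no configuration type has been missed or double-counted. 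The one place where your justification is thinner than it should be is the claim that each feasible type is a \emph{single} $\PGL_3(\F_2)$-orbit: for the rigid types this follows from transitivity on (non-)collinear triples, but for ``collinear, non-concurrent'' and for ``converging'' there is a residual choice of lines, and one must check that the stabilizer of the line $M$ (acting as $S_4$ on the four points off $M$) is transitive on the four non-concurrent choices. This is true and easy, but it is not covered by ``transitivity on lines and on triples of points.''

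There is, however, one genuine gap. What you have counted are isomorphism classes of \emph{extensions} $G\leadsto H$, i.e.\ $\Aut(H)$-orbits of matchings; the proposition counts isomorphism classes of the complexes $G$ themselves (Definition \ref{bmissph}). Your argument therefore only gives ``at most six'': a priori two non-conjugate $3$-matchings could have isomorphic complements via a graph isomorphism that does not extend to $H$. To get ``exactly six'' you must either prove that the six complements are pairwise non-isomorphic, or prove that every isomorphism between complements extends to an automorphism of $H$ (a uniqueness-of-extension statement in the spirit of Lemma \ref{l54}). The paper does the former by computing the length spectra of $G_1,\dots,G_6$, and this step is not vacuous: $G_5$ and $G_6$ have the same length spectrum and require a further argument to be distinguished. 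Your proof needs this final verification appended; the same remark applies, more mildly, to the two-chambers case.
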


The graphs $G_1,\ldots, G_6$ corresponding to the third assertion are shown on Fig.\ \ref{Fano-3} (in the usual occidental direction of reading). The graph $G_5$ is the inverse pyramid.

\begin{figure}[htbp]
\centerline{\includegraphics[width=3.2cm]{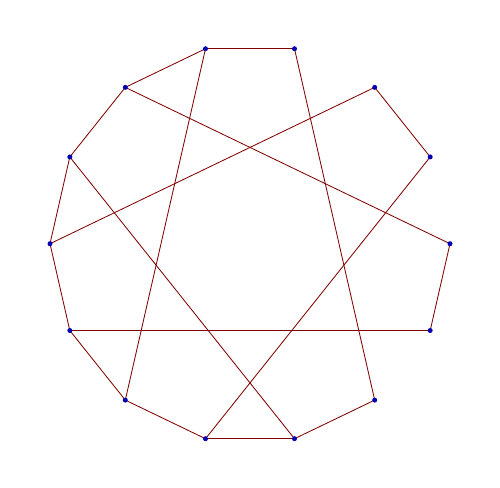}\includegraphics[width=3.2cm]{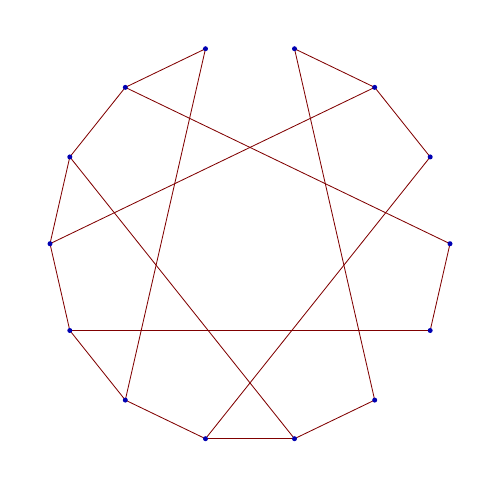}\includegraphics[width=3.2cm]{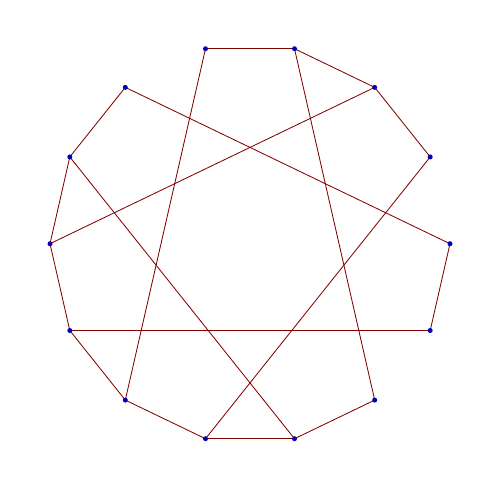}}\centerline{\includegraphics[width=3.2cm]{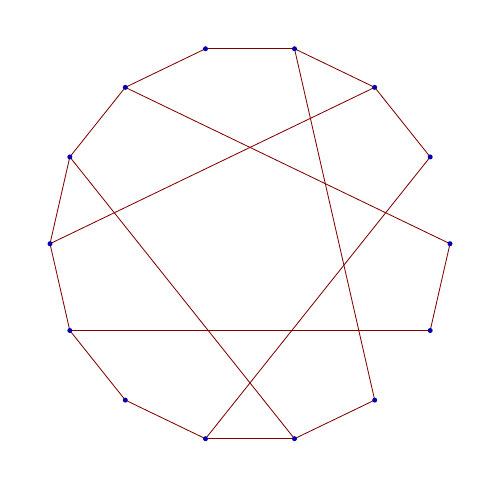}\includegraphics[width=3.2cm]{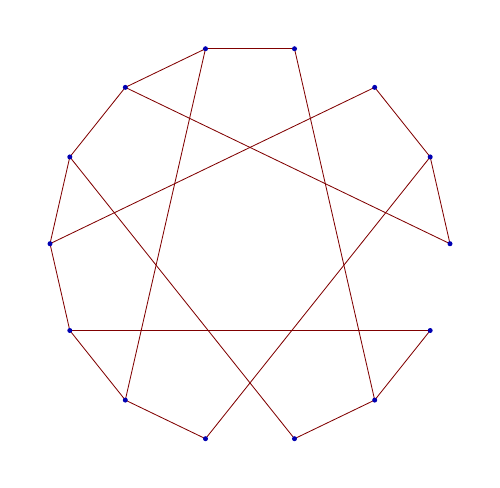}\includegraphics[width=3.2cm]{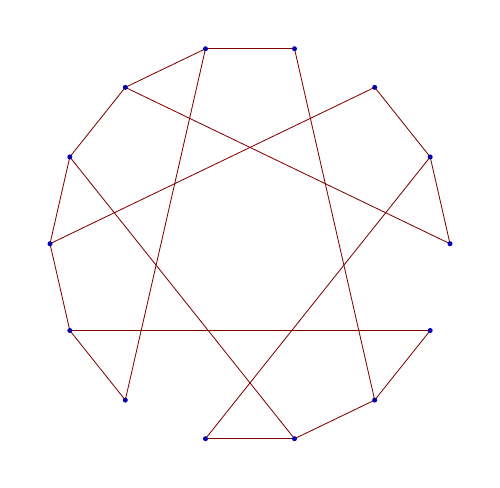}}
\caption{Spherical buildings of type $A_2$  with 3 chambers missing ($q=2$)} \label{Fano-3}
\end{figure}

\begin{proof}
The first assertion is obvious from the fact that $\Aut(H)=\PGL(2,7)$ is edge transitive. 

The second assertion follows from  $H$ being a  building. Indeed, let $e$ and $f$ be two edges of $H$. Then there exists an apartment $A$ of $H$ (i.e.\ a 6-cycle) containing both $e$ and $f$. Since by definition buildings with chambers missing have no terminal edge, $e$ and $f$ are disjoint in $A$. The two types of buildings with two chambers missing depend on whether $e$ and $f$ are opposite  or not in $A$, as is readily checked.

Let us prove the third assertion.  
We claim that, up to  $\Aut(H)$,  $H$ contains exactly 6 subsets of 3 pairwise disjoint edges. 
Let $E=\{e, f,g\}$ be a subset of 3 pairwise disjoint edges of $H$. We distinguish four cases.

Case $(1,1,1)$: there is an apartment $A$ containing $E$. Since $H$ is a building, $\Aut(H)$ is transitive on flags chambers $\subset$ apartments. Therefore there is exactly one solution in this case. We call $G_1$ the resulting building with chambers missing. 

Case $(1,1,2)$: exactly two edges of $E$, say $e$ and $f$, are at distance 2. In that case $g$ is at distance 1 from both $e$ and $f$. As in the second assertion above, we may assume that $e$ and $f$  are opposite in some apartment $A$.  Once $A$ is fixed, there are 4 possible choices for the edge $g$. However only 2 of these choices remains using easy symmetries of $H$, and therefore we get two buildings in that case, that we call $G_2$ and $G_3$. 

Case $(1,2,2)$: exactly two edges of $E$, say $e$ and $f$, are at distance 1. In that case $g$ is at distance 2 from both $e$ and $f$. One easily check that there are only two possible choices for $g$ once $e$ and $f$ are given. Since there is a symmetry interchanging the two situations, and since $\Aut(H)$ 
is transitive on paths of length 3, this case provides us with only one building with chambers missing. We call it $G_4$.

Case $(2,2,2)$: the pairwise distance between $e$, $f$ and $g$ is at least (and hence is exactly) equal to 2. As in $(1,1,2)$ there exists an apartment $A$ containing both $e$ and $f$ as opposite edges.  It is then easily seen we have 3 possible edges at distance 2 from both $e$ and $f$. Two of these 3 subcases gives isomorphic graphs (using a symmetry which flips $A$ stabilizing $e$ and $f$).  This gives us two solutions $G_5$ and $G_6$.

The graphs $G_1,\ldots,G_6$ are pairwise non isomorphic. To see this, we note that the family of lengths of maximal non-branching paths counted with multiplicity (the \emph{length spectrum} of $G_i$)  distinguishes them from each other. Indeed we have respectively for $G_1,G_2,G_3$:
\[
\{({\pi\over 3}, 9),(\pi,3)\}, \ \{ ({\pi\over 3}, 8),({2\pi\over 3},2),({\pi},2)\},\  \{ ({\pi\over 3}, 8),({2\pi\over 3},3),({4\pi\over 3},1)\}
\]
and for $G_4,G_5,G_6$:
\[
\{ ({\pi\over 3}, 7),({2\pi\over 3},4),(\pi,1)\},\  \{ ({\pi\over 3}, 6),({2\pi\over 3},6)\},\ \{ ({\pi\over 3}, 6),({2\pi\over 3},6)\}
\]
as length spectra. To distinguish between $G_5$ and $G_6$ we note that for example that $G_5$ has an apartment made of paths of length $\pi\over 3$ while $G_6$ does not. 
This concludes the proof of (3).
\end{proof}

\section{The case where one chamber only is missing} \label{sec2}

Take a (thick) Euclidean building, which has rank 2, remove only one chamber from this building, equivariantly, in the sense of Definition \ref{bmiss}, taking a universal cover. Then it may happen that the resulting space is hyperbolic. This section is centered around this fact.

It is convenient in concrete situations to work with polyhedral complexes (with some finite set of shapes of the Euclidean plane) rather than simplicial complexes. One gets back to the former by producing a (usually canonical) triangulation.

\begin{theorem}\label{th2} There exist two Euclidean buildings with exactly one chamber missing $(X_0,\G_0)$  and $(X_1,\G_1)$ of dimension 2  such that: 
\begin{enumerate}
\item both complexes $X_i$ are hyperbolic, $\G_i$ acts freely on $X_i$ transitively on vertices;
\item all links in $X_0$ and $X_1$ are isomorphic to a same graph $G_6$,  which is a spherical building with 3 chambers missing;
 \item there is a unique extension $(X_i,\G_i)\leadsto (X_i',\G_i')$ 
of $(X_i,\G_i)$ into a Euclidean building of type $\tilde A_2$, in the sense of Definition \ref{bmiss}, where the building $X_i'$ is endowed with a free action of a countable group $\G_i'$ which is transitive on vertices;
 \item the buildings  $X_0'$ and $X_1'$ are not isometric.
\end{enumerate} 
\end{theorem}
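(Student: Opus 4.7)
My plan is to construct $V_0$ and $V_1$ explicitly as one-vertex polyhedral $2$-complexes with link at the unique vertex isomorphic to $G_6$, and then read off the four properties from the general machinery of the preceding sections. Since $G_6$ has $14$ vertices and $18$ edges, each such $V_i$ must consist of $7$ oriented loop edges at the unique vertex together with $6$ equilateral triangular faces attached along length-$3$ words in these loops. Equivalently, $\G_i := \pi_1(V_i,*)$ admits a presentation with $7$ generators and $6$ relators of length $3$, and $X_i := \tilde V_i$ carries a free, vertex-transitive action of $\G_i$ by deck transformations. A finite combinatorial enumeration picks out those presentations whose induced link at $*$ is precisely $G_6$ (rather than one of $G_1,\ldots,G_5$ or not a spherical building with missing chambers at all).

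Given such a $V_i$, properties (2) and (3) then follow formally. The link of $X_i$ at every vertex is $G_6$, which by Proposition \ref{fact6prime} is a spherical building of type $A_2$ and order $2$ with three chambers missing, the extension being the Fano incidence graph $H$. Hence $(X_i,\G_i)$ is a Euclidean building with chambers missing of type $\tilde A_2$; vertex-transitivity together with three missing local chambers per link forces the global missing set in $X_i'$ to be a single $\G_i'$-orbit, i.e.\ exactly one chamber. Requiring moreover in the enumeration that the removed triangle of $V_i$ share no edge with any of its translates (a generic condition that can be read directly off the presentation) ensures the distance between missing chambers in $X_i$ is $\geq 2$; Theorem \ref{th:loccrit-intr} then supplies a unique extension $(X_i,\G_i)\leadsto (X_i',\G_i')$, with $\G_i'$ vertex-transitive on $X_i'$ because $V_i' = X_i'/\G_i'$ inherits a single vertex from $V_i$.

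The hyperbolicity of $X_i$ (the outstanding half of (1)) follows from the corollary at the end of Section \ref{defs}: a two-dimensional Euclidean building with chambers missing is hyperbolic iff the extension is degenerate, i.e.\ iff no apartment of $X_i'$ lies entirely in $X_i$. An apartment of $X_i'$ meets each vertex link in an apartment of $H$, namely a $6$-cycle of edges of length $\pi/3$. But the case analysis at the end of the proof of Proposition \ref{fact6prime} shows that $G_6$, in contrast to $G_5$, contains no such $6$-cycle: every $6$-cycle of $G_6$ passes through at least one of the six degree-$2$ vertices produced by edge removal. Hence every apartment of $X_i'$ meets a missing chamber, the extension is degenerate, and $X_i$ is hyperbolic.

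The crux is (4), the non-isomorphism of the extensions $X_0'$ and $X_1'$. Here my approach is to select two presentations from the enumeration whose associated $\tilde A_2$-buildings are distinct, e.g.\ by starting from two known non-isomorphic vertex-transitive $\tilde A_2$-buildings of order $2$ (drawn from the catalogue of vertex-transitive lattices on $\tilde A_2$-buildings in the spirit of Cartwright--Mantero--Steger--Zappa and Ronan), explicitly writing down one-vertex quotients, and removing from each a single triangle chosen so that the resulting link is exactly $G_6$ and the distance between translates of the removed chamber is $\geq 2$. Since by Theorem \ref{th:loccrit-intr} the building $(X_i',\G_i')$ is recovered from $(X_i,\G_i)$, it suffices to certify that the two chosen ambient buildings are non-isomorphic, which one distinguishes through a combinatorial invariant of the one-vertex quotients $V_i'$ (e.g.\ a Hecke-type generator invariant of $\G_i'$). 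I expect the main obstacle to be this last step: verifying in the enumeration that the link produced after chamber removal is genuinely $G_6$ and not one of the other five isomorphism classes of Proposition \ref{fact6prime}, and then confirming that the two presentations in hand produce non-isomorphic universal covers $X_i'$ rather than the same cover with two different one-vertex quotients. Both tasks, however, reduce to finite combinatorial computations on the presentations.
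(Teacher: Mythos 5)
Your outline diverges from the paper's proof in a way that matters, and the key step --- hyperbolicity --- contains a genuine error. You argue that an apartment of $X_i'$ avoiding all missing chambers would induce, in each vertex link, a $6$-cycle of $G_6$, and that no such cycle exists because ``every $6$-cycle of $G_6$ passes through a degree-$2$ vertex.'' But passing through a degree-$2$ vertex is not an obstruction: $G_6$ \emph{does} contain closed geodesics of length $2\pi$ (in the notation of Lemma \ref{l51}, the paths of length $2\pi/3$ in $G_6$ assemble into a circle $C$ of length exactly $2\pi$), and a flat plane in $X_i$ only needs its link at each vertex to be \emph{some} $2\pi$-cycle of $G_6$; there is no requirement that the cycle avoid degree-$2$ vertices. (The remark you are leaning on from the proof of Proposition \ref{fact6prime} --- that $G_6$, unlike $G_5$, has no apartment made of paths of length $\pi/3$ --- says only that no $2\pi$-cycle consists entirely of branching vertices, which is a different and weaker statement.) Consequently flats are not locally obstructed, and degeneracy cannot be read off the link alone. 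The paper's proof is necessarily global: it fixes the explicit edge-labelled complexes $V_6^0,V_6^1$, observes that every $2\pi$-cycle of $G_6$ through the center of $T_\mathrm{u}$ must pass through the center of $T_\mathrm{d}$, and then tracks the family of geodesics labelled $1$ in $X_6^i$ to show that every flat strip has width at most $\sqrt 3$, whence hyperbolicity by the no-flat criterion. Nothing in your proposal substitutes for this step.

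A second, structural gap: the existence of the two complexes is never established. You reduce everything to ``a finite combinatorial enumeration'' of $7$-generator, $6$-relator presentations with link $G_6$, and for item (4) to selecting two of them over non-isomorphic ambient buildings; but that this enumeration is non-empty, that it yields vertex-transitive examples with a single missing chamber orbit at pairwise distance $\geq 2$, and that it realizes \emph{both} isomorphism classes of $\tilde A_2$-buildings of order $2$, is precisely the content of the theorem. The paper resolves this by exhibiting $V_6^0$ and $V_6^1$ explicitly (three lozenges each), verifying by face chasing that the link is $G_6$, invoking the local criterion of Theorem \ref{th:loccrit} and Theorem \ref{cor:unicity} for existence and uniqueness of the extension, and distinguishing the two ambient buildings via Tits's classification of spheres of radius $2$ (they correspond to $\GL_3(\QI_2)$ and $\GL_3(\FI_2((y)))$ respectively). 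Your plan for (4) --- starting from two known non-isomorphic buildings and deleting a chamber --- is a reasonable alternative route, but you would still need to verify that a chamber orbit can be removed from each so that the resulting link is exactly $G_6$, and you would still face the hyperbolicity problem above.
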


Let us consider the following two CW complexes $V_0$ and $V_1$, obtained by gluing together three lozenges according to their boundary, respecting orientations, as follows.\\
\begin{figure}[h]
\centering
\begin{tikzpicture}[y=.8cm]
\begin{scope}[xshift=-3cm]
\draw[-latex] (1,0) -- node[anchor=west]{\tiny{1}} (0.5,0.866);
\draw[-latex] (0.5,0.866) -- node[anchor=east]{\tiny{2}} (0,0);
\draw[-latex] (1,0) -- node[anchor=west]{\tiny{2}} (0.5,-0.866);
\draw[-latex] (0.5,-0.866) -- node[anchor=east]{\tiny{3}} (0,0);
\shade[ball color=black] (0,0) circle (0.3ex);
\shade[ball color=black] (1,0) circle (0.3ex);
\shade[ball color=black] (0.5,0.866) circle (0.3ex);
\shade[ball color=black] (0.5,-0.866) circle (0.3ex);
\draw[-latex] (2.6,0) -- node[anchor=west]{\tiny{1}} (2.1,0.866);
\draw[-latex] (2.1,0.866) -- node[anchor=east]{\tiny{4}} (1.6,0);
\draw[-latex] (2.6,0) -- node[anchor=west]{\tiny{3}} (2.1,-0.866);
\draw[-latex] (2.1,-0.866) -- node[anchor=east]{\tiny{2}} (1.6,0);
\node at (2.1,-1.6) {$V_6^0$};
\shade[ball color=black] (1.6,0) circle (0.3ex);
\shade[ball color=black] (2.6,0) circle (0.3ex);
\shade[ball color=black] (2.1,0.866) circle (0.3ex);
\shade[ball color=black] (2.1,-0.866) circle (0.3ex);
\draw[-latex] (4.2,0) -- node[anchor=west]{\tiny{1}} (3.7,0.866);
\draw[-latex] (3.7,0.866) -- node[anchor=east]{\tiny{3}} (3.2,0);
\draw[-latex] (4.2,0) -- node[anchor=west]{\tiny{4}} (3.7,-0.866);
\draw[-latex] (3.7,-0.866) -- node[anchor=east]{\tiny{4}} (3.2,0);
\shade[ball color=black] (3.2,0) circle (0.3ex);
\shade[ball color=black] (4.2,0) circle (0.3ex);
\shade[ball color=black] (3.7,0.866) circle (0.3ex);
\shade[ball color=black] (3.7,-0.866) circle (0.3ex);
\end{scope}

\begin{scope}[xshift=3cm]
\draw[-latex] (1,0) -- node[anchor=west]{\tiny{1}} (0.5,0.866);
\draw[-latex] (0.5,0.866) -- node[anchor=east]{\tiny{2}} (0,0);
\draw[-latex] (1,0) -- node[anchor=west]{\tiny{3}} (0.5,-0.866);
\draw[-latex] (0.5,-0.866) -- node[anchor=east]{\tiny{4}} (0,0);
\shade[ball color=black] (0,0) circle (0.3ex);
\shade[ball color=black] (1,0) circle (0.3ex);
\shade[ball color=black] (0.5,0.866) circle (0.3ex);
\shade[ball color=black] (0.5,-0.866) circle (0.3ex);
\draw[-latex] (2.6,0) -- node[anchor=west]{\tiny{1}} (2.1,0.866);
\draw[-latex] (2.1,0.866) -- node[anchor=east]{\tiny{3}} (1.6,0);
\draw[-latex] (2.6,0) -- node[anchor=west]{\tiny{4}} (2.1,-0.866);
\draw[-latex] (2.1,-0.866) -- node[anchor=east]{\tiny{2}} (1.6,0);
\shade[ball color=black] (1.6,0) circle (0.3ex);
\shade[ball color=black] (2.6,0) circle (0.3ex);
\shade[ball color=black] (2.1,0.866) circle (0.3ex);
\shade[ball color=black] (2.1,-0.866) circle (0.3ex);
\node at (2.1,-1.6) {$V_6^1$};
\draw[-latex] (4.2,0) -- node[anchor=west]{\tiny{1}} (3.7,0.866);
\draw[-latex] (3.7,0.866) -- node[anchor=east]{\tiny{4}} (3.2,0);
\draw[-latex] (4.2,0) -- node[anchor=west]{\tiny{2}} (3.7,-0.866);
\draw[-latex] (3.7,-0.866) -- node[anchor=east]{\tiny{3}} (3.2,0);
\shade[ball color=black] (3.2,0) circle (0.3ex);
\shade[ball color=black] (4.2,0) circle (0.3ex);
\shade[ball color=black] (3.7,0.866) circle (0.3ex);
\shade[ball color=black] (3.7,-0.866) circle (0.3ex);
\end{scope}
\end{tikzpicture}
\caption{}
\end{figure}

The fundamental groups of these complexes have the following presentations:
\[
\G_6^0=\langle u,v\mid v  u^2  v^{-1}  u  v^{-1}  u^{-1}  v  =uv^2u^{-1} \rangle
\]
and 
\[
\G_6^1=\langle u,v,w\mid u^2 w    = vwv,\,
    w   =v^{-1}uw^{-1}uv\rangle
\]
One readily sees that\\
\centerline{$H_1(V_6^0,\ZI)=\ZI\times\ZI/2\ZI$ and $H_1(V_6^1,\ZI)=\ZI\times\ZI/2\ZI\times\ZI/2\ZI$;}\\ 
in particular $\G_6^0$ and $\G_6^1$ are not isomorphic.

\begin{lemma}\label{l51}
The complexes $V_6^0$ and $V_6^1$ have a single vertex whose link is a building of type $A_2$ with 3 chambers missing (namely, $G_6$). 
\end{lemma}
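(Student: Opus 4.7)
The plan is to carry out, for each of $V_6^0$ and $V_6^1$, three steps: (i) verify that the twelve corners of the three lozenges collapse to a single vertex; (ii) compute the link of this vertex; and (iii) identify the resulting graph with $G_6$ from Proposition~\ref{fact6prime}.

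\emph{Step (i).} Two oriented edges with the same label are identified, and so are their heads and their tails. For $V_6^0$ the label $2$ already appears twice inside the first lozenge (on $c\to a$ and on $b\to d$), which forces $c\sim b$ and $a\sim d$ within that single lozenge. The label $1$ then appears once in each lozenge (as $b\to c$), and a short chain of identifications involving the remaining labels $3$ and $4$ collapses all twelve corner vertices to one equivalence class. The case of $V_6^1$ is handled similarly, starting from the three occurrences of the label $1$.

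\emph{Steps (ii) and (iii).} After collapse, each of the four labeled edges of $V_6^i$ becomes a loop at the unique vertex, so the link has $2\times 4=8$ vertices, one for each oriented end $i^-$ (tail) and $i^+$ (head) of the loop labeled $i\in\{1,2,3,4\}$. Each lozenge has angles $2\pi/3$ at its left and right vertices and $\pi/3$ at its top and bottom vertices, so each lozenge contributes four link edges: two of length $2\pi/3$ and two of length $\pi/3$, each joining the pair of oriented ends that meet at the corresponding corner. Tabulating these yields a $3$-regular graph on $8$ vertices with $12$ edges and length spectrum $\{(\pi/3,6),(2\pi/3,6)\}$, which by Proposition~\ref{fact6prime} rules out $G_1,\ldots,G_4$. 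To rule out $G_5$ I invoke the criterion recorded at the end of the proof of that proposition: $G_5$ admits an apartment consisting of six edges of length $\pi/3$, whereas $G_6$ does not. Inspection of the six short edges in the list shows that one link vertex is isolated in the subgraph spanned by short edges while the other seven form a tree, so there is no $6$-cycle of short edges, and the link is $G_6$. The same argument applies to $V_6^1$.

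The only delicate point is the orientation bookkeeping in step (ii): at each corner one must correctly identify which pair of oriented ends $i^\pm$ among the $8$ link vertices the corner joins. Once the $12$ edges have been correctly tabulated, the inferences in (i) and (iii) are immediate.
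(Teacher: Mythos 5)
Your steps (i) and (ii) are sound and run parallel to the paper's ``face chasing'': you obtain the single-vertex claim by tracing the corner identifications directly (the paper instead observes that the union of all the links is connected), and your tabulation of $8$ link vertices and $12$ link edges, six of angle $\pi/3$ and six of angle $2\pi/3$, is exactly the data the paper extracts. The genuine gap is in step (iii). Ruling out $G_1,\dots,G_4$ by the length spectrum and $G_5$ by the absence of a short $6$-cycle identifies the link as $G_6$ only \emph{if you already know that the link is one of the six graphs of Proposition~\ref{fact6prime}}, i.e.\ that it is the incidence graph $H$ of the Fano plane with three pairwise disjoint edges deleted --- but that is precisely part of what the lemma asserts. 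The properties you actually verify ($3$-regular on $8$ vertices, six short and six long edges, short edges spanning an isolated vertex plus a tree, no short $6$-cycle) do not determine the graph: take, for instance, short edges forming a path on seven vertices, an eighth vertex isolated, and six long edges completing this to a $3$-regular graph; it has the same length spectrum and the same qualitative short-edge subgraph, yet is not $G_6$ (its short-edge tree is not a subdivided tripod, and it need not even have girth $\geq 2\pi$). In particular, you never examine the long-edge subgraph at all.

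The paper closes exactly this gap by characterizing $G_6$ among \emph{all} graphs with length spectrum $\{(\pi/3,6),(2\pi/3,6)\}$: the long edges must form the disjoint union of a circle of length $2\pi$ and a tripod $T_{\mathrm{u}}$, and the short edges must form a tripod $T_{\mathrm{d}}$ isometric to $T_{\mathrm{u}}$ attached to that circle; it then checks both properties on the tabulated link. To repair your argument, either verify this full structure (the shape of the long-edge subgraph, and that the short-edge tree is a tripod with subdivided legs rather than an arbitrary tree on seven vertices), or simply exhibit an explicit isomorphism from your $12$-edge table to $G_6$ as drawn in Fig.~\ref{Fano-3}. Either fix is routine given that you already hold the complete edge list.
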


 Since the length spectrum of $G_6$ is $\{{\pi\over 3}, {2\pi\over 3}\}$ with multiplicity 6, it is natural to look for complexes whose links are isomorphic to $G_6$ among polyhedra are built out of three lozenges (compare Section \ref{Sectinvert}).

\begin{proof}
It is obvious from the above description of $V_6^i$ ($i=0,1$) that the length spectrum of the reunion $L_i$ of all links in $V_6^i$ is that of $G_6$. Further, one easily sees by a direct inspection (which we call \emph{face chasing}) that $L_i$ is connected in both cases.  It follows that $V_6^i$  has a single vertex. Now $G_6$ is characterized among graph with length spectrum  $\{ ({\pi\over 3}, 6),({2\pi\over 3},6)\}$ by the following properties:
\begin{itemize}
\item paths of length $2\pi\over 3$ are organized in a disjoint union of a circle $C$ of length $2\pi$ and a tripod $T_\mathrm{u}$;
\item paths of length $\pi\over 3$ form a tripod $T_\mathrm{d}$ isometric to $T_\mathrm{u}$ attached to the circle $C$. 
\end{itemize}
These two properties can be checked on $L_i$, again, by face chasing.
\end{proof}

\begin{lemma}
The universal covers $X_6^0=\tilde V_6^0$  and $X_6^1=\tilde V_6^1$  are hyperbolic CAT(0) spaces. 
\end{lemma}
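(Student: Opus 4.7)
The plan is to establish the CAT(0) property and hyperbolicity of $X_6^i$ separately.

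\textbf{CAT(0) property.} This will follow from Gromov's link condition. By Lemma~\ref{l51}, the (unique) vertex link of $V_6^i$ is $G_6$, the Fano incidence graph $H$ with three chambers removed. Since $H$ is $3$-regular bipartite of girth $6$ -- equivalently girth $2\pi$ in the angular metric where each edge has length $\pi/3$ -- and removing edges cannot decrease girth, $G_6$ has girth at least $2\pi$. The faces of $V_6^i$ being equilateral triangles, Gromov's link condition implies that $V_6^i$ is locally CAT(0), and the Cartan--Hadamard theorem then gives that $X_6^i$ is (globally) CAT(0).

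\textbf{Hyperbolicity.} I invoke the no-flat criterion of \cite{BH}: a proper cocompact $2$-dimensional CAT(0) polyhedral complex is Gromov hyperbolic iff it contains no isometrically embedded copy of $\mathbb{E}^2$. Assume for contradiction that a flat plane $F \cong \mathbb{E}^2$ sits isometrically in $X_6^i$. Then $F$ is a subcomplex built of equilateral triangles, and at every vertex $v \in F$ the link of $v$ in $F$ is a round circle of length $2\pi$ isometrically embedded in the link of $v$ in $X_6^i$, which is a copy of $G_6$.

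By the structural description of $G_6$ in the proof of Lemma~\ref{l51}, the six maximal non-branching paths of length $2\pi/3$ in $G_6$ decompose as a disjoint union of a cycle $C$ of length $2\pi$ and a tripod $T_u$. Any isometrically embedded circle of length $2\pi$ in $G_6$ passing through a degree-$2$ vertex must use the full $2\pi/3$-path through that vertex, and a circle avoiding all degree-$2$ vertices would lie inside the $\pi/3$-part (the tripod $T_d$), which contains no cycle; hence any such circle must coincide with $C$. Therefore, at every vertex $v \in F$, the six triangles of $F$ around $v$ appear along $C$, rigidly prescribing the cyclic sequence of oriented labels from $\{1,2,3,4\}$ on the six edges of $F$ at $v$. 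Propagating this constraint across a single edge of $F$, the cyclic sequence required at the neighbouring vertex must again be a copy of $C$; a finite face-chasing check, performed separately in $V_6^0$ and $V_6^1$, shows that these two constraints are incompatible. This contradiction rules out the existence of $F$, so $X_6^i$ is Gromov hyperbolic.

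The main obstacle is the final face-chasing step: one must inspect in each of $V_6^0$ and $V_6^1$ how the unique apartment $C$ is realized by the labels on the six triangles around a vertex, and verify that extending across any edge forces an inconsistent labelling at the next vertex. The crucial input is the uniqueness of $C$ in $G_6$, extracted from the length-spectrum analysis in the proof of Proposition~\ref{fact6prime}; this uniqueness provides enough local rigidity to preclude the consistent tiling required by a flat.
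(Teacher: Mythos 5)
Your CAT(0) argument is fine and agrees with the paper's (girth of $G_6$ is at least $2\pi$ because edges were removed from a spherical building, then Cartan--Hadamard). The hyperbolicity argument, however, rests on a false claim: that the only isometrically embedded circle of length $2\pi$ in $G_6$ is the circle $C$ formed by the $2\pi/3$-paths. This is not true. Your dichotomy only shows that a $2\pi$-circle through a degree-$2$ vertex must use the whole $2\pi/3$-path through that vertex; it does not show that such a circle lies entirely in the union $C\sqcup T_{\mathrm{u}}$ of the $2\pi/3$-paths. In fact $G_6$ contains several other $2\pi$-circles mixing the two kinds of paths: for instance, two legs of $T_{\mathrm{u}}$ joined through the centre of $T_{\mathrm{d}}$ by two $\pi/3$-edges form a $6$-edge cycle, as does one arc of $C$ of length $2\pi/3$ closed up by four $\pi/3$-edges of $T_{\mathrm{d}}$ through its centre. (This is consistent with the paper's own key observation --- ``every cycle of length $2\pi$ which contains the center of $T_{\mathrm{u}}$ must contain the center of $T_{\mathrm{d}}$'' --- which would be vacuous if $C$ were the unique such cycle, and with the table in Proposition \ref{rank3} recording that $G_6$ has roots of rank $2$.) Consequently the ``rigid prescription'' of the six labels around each vertex of a putative flat does not follow, and the concluding face-chasing step (which you in any case only assert, without carrying out) has nothing to propagate.

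The paper's route is different and avoids this: it uses the weaker but correct link property above, observes that every face of $X_6^i$ carries exactly one edge labelled $1$ and that this edge always sits at the centre of $T_{\mathrm{u}}$ or $T_{\mathrm{d}}$ in the relevant links, and then analyses the flat strips bordering the family of parallel label-$1$ geodesics. Reading off the periodic labels on the opposite boundaries of these strips (labels $3$ and $24$ for $i=0$, label $432$ for $i=1$) shows that every flat strip has width at most $\sqrt{3}$, so no flat plane exists and the no-flat criterion applies. If you want to salvage your approach, you would need to classify \emph{all} seven $2\pi$-circles of $G_6$ and show that no consistent labelling of a flat can realize one of them at every vertex; the strip-width argument is the cleaner way to organize exactly that check.
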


\begin{proof}
That $X_6^i$ is CAT(0) spaces follows from $G_6$ having no cycle of length smaller that $2\pi$ (since we removed edges from a spherical building). 
Let us show that $X_6^i$ is hyperbolic. 

We first observe a property of $G_6$, in the notations of the proof of \ref{l51}: every cycle of length $2\pi$ which contains the center of  $T_\mathrm{u}$ must contain the center of $T_\mathrm{d}$. This can be shown easily.

By definition, each face of $X_6^i$ contains an oriented edge with label 1. Furthermore, one easily check that this edge corresponds either to the center of  $T_\mathrm{u}$ or $T_\mathrm{d}$ in links of $X_6^i$. 

The set of edges labelled 1 form a family of parallel geodesic of $X_6^i$ each of which is bounded by 3 flat strips of width $\sqrt3\over 2$. The opposite boundary of each of these three flats strips belongs to either: 
\begin{itemize}
\item[$i=0$:] two families of parallel geodesics with respective periodic labels 3 and $24$;
\item[$i=1$:] the family of parallel geodesics with periodic labels $432$.
\end{itemize}
In both cases, it follows that every flat strip in $X_6^i$ has width at most $\sqrt 3$. By the no flat criterion, this shows that $X_6^i$ is hyperbolic.
\end{proof}

To see that $(X_6^i,\G_6^i)$ are building with exactly one missing chambers, one has to glue exactly one face to the the complex $V_i$ and recognize a Euclidean building (observe that this face must be glued on the diagonals of lozenges). 

In fact we have: 

\begin{lemma}
Both $(X_6^0,\Gamma_6^0)$ and $(X_6^1,\Gamma_6^1)$ are buildings with (exactly) one missing chamber. Furthermore, in each case there is a \emph{unique} extension $(X_6^0,\Gamma_6^0)\leadsto ({X_6^0}',{\Gamma_6^0}')$ into a Euclidean building of type $\tilde A_2$.
\end{lemma}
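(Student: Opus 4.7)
The plan is to begin by subdividing each lozenge of $V_6^i$ along its short diagonal, so that $V_6^i$ becomes a simplicial complex built from six equilateral triangles meeting at a single vertex. Its vertex link remains $G_6$, which differs from the Heawood graph $H$ precisely in the three missing edges forming the $(2,2,2)$ configuration described in the proof of Proposition~\ref{fact6prime}.

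For existence, I would add one equilateral $2$-cell $T_i$ whose three boundary edges are identified with edges already present in $V_6^i$, in such a way that the three new edges introduced in the link of the unique vertex are exactly the three edges missing from $G_6$. The edge labels and orientations in the diagrams of $V_6^0$ and $V_6^1$ force a specific boundary word for $T_i$ in each case. A face-chasing argument, parallel to that of Lemma~\ref{l51}, then confirms that the link of the unique vertex of the resulting complex $V_6^{i\prime}$ is the Heawood graph. By Tits' local-to-global theorem for buildings (invoked in Section~\ref{locglob}), the universal cover $X_6^{i\prime}:=\widetilde{V_6^{i\prime}}$ is a Euclidean building of type $\tilde A_2$, and the natural covering $X_6^i\to X_6^{i\prime}$ extends $(X_6^i,\Gamma_6^i)$ into this building with exactly one orbit of missing chambers.

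For uniqueness, I would apply Theorem~\ref{th:loccrit-intr}. Since there is only one orbit of missing chambers in the extension produced above, the hypothesis reduces to checking that no two distinct $\Gamma_6^{i\prime}$-translates of $T_i$ share an edge. This is verified by direct inspection in $V_6^{i\prime}$: each of the three boundary edges of $T_i$ lies on exactly two $2$-cells, namely $T_i$ itself and one of the original six triangles of $V_6^i$. Hence any two distinct missing chambers of the extension are separated by at least one chamber, and Theorem~\ref{th:loccrit-intr} forces uniqueness. The main obstacle is the concrete combinatorial step: identifying the correct boundary word for $T_i$ and carrying out the face-chase certifying that the completed link is $H$. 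This must be done separately for $i=0$ and $i=1$, and the fact that the two resulting boundary words differ is what ultimately produces non-isometric buildings ${X_6^0}'$ and ${X_6^1}'$ as required in Theorem~\ref{th2}(4).
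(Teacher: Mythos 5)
Your existence argument is essentially the direct route the paper sketches immediately before the lemma: subdivide the lozenges, glue a single new face along the three short diagonals, verify by face chasing that the completed link is the incidence graph $H$ of the Fano plane, and invoke Tits' local characterization (Theorem \ref{thm:titslocal}). The paper chooses instead to defer to the general local criterion of Theorem \ref{th:loccrit}, but explicitly offers the direct gluing as an alternative, so this part is sound.

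The uniqueness step, however, contains a genuine error. First, in ${V_6^i}'$ each diagonal edge lies on \emph{three} $2$-cells, not two: the two halves of its lozenge together with the new face $T_i$. (This is forced: in a building of type $\tilde A_2$ and order $2$ every panel lies on exactly $q+1=3$ chambers.) Second, and more seriously, distinct $\G'$-translates of the missing chamber are \emph{not} separated by a chamber: at every vertex of ${X_6^i}'$ the link is $H$ with the three edges of $H\setminus G_6$ removed, each removed edge is the corner of a missing chamber, and no $2$-simplex of a building has two corners at the same vertex, so three \emph{distinct} translates of $T_i$ meet at every vertex. Your claimed separation therefore fails, and the distance hypothesis of Theorem \ref{th:loccrit-intr}, if read as a condition on individual missing chambers, is simply not satisfied here. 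That hypothesis concerns the distance between distinct \emph{equivariant} classes of missing chambers (see Theorem \ref{cor:unicity2}) and is vacuous when there is a single orbit; the statement that actually does the work in the one-orbit case is Theorem \ref{cor:unicity}, which carries no distance hypothesis and is proved by a case analysis on the extension invariant. Alternatively, the paper supplies a direct local argument (Lemma \ref{l54}): the extension $G_6\leadsto H$ is unique up to $\Aut(H)$, because the girth-$6$ condition forces a unique pairing of the valency-$2$ vertices of $G_6$. Either of these should replace your verification.
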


That the extension exist can be proved directly, by understanding the gluing procedure indicated above, but we will simply refer to a more general criterion proved in Section \ref{locglob} (see Theorem \ref{th:loccrit}). The same applies to unicity (compare Corollary \ref{cor:unicity}), for which a direct argument follows using the fact that $G_6$ admits has unique extension into building of type $A_2$ (we recall that $H$ denotes the incidence graph of the Fano plane):

\begin{lemma}[Unicity of extensions for $G_6$]\label{l54}
Let $\lambda_1,\lambda_2: G_6\leadsto H$  be two extensions of $G_6$ into $H$. There exists an automorphism $\theta\in \Aut(H)$ such that $\theta\circ \lambda_1=\lambda_2$.
\end{lemma}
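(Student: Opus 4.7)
The plan is to factor the question through Proposition \ref{fact6prime}. Given two extensions $\lambda_1,\lambda_2 : G_6 \leadsto H$, let $E_i\subset H$ denote the set of the three chambers of $H$ missing from $\lambda_i(G_6)$. By the assumption that each $\lambda_i$ realizes $G_6$ (and not $G_5$) as a building with three chambers missing, $E_1$ and $E_2$ are both $(2,2,2)$-configurations of pairwise disjoint edges of $H$ whose removal yields $G_6$. Proposition \ref{fact6prime} asserts that $\Aut(H)$ acts transitively on such configurations, so there exists $\phi\in\Aut(H)$ with $\phi(E_1)=E_2$, hence $\phi(\lambda_1(G_6))=\lambda_2(G_6)$, and the composition $\alpha := \lambda_2^{-1}\circ\phi\circ \lambda_1$ is an automorphism of $G_6$.

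The lemma thereby reduces to showing that the natural restriction map
$$
\rho : \stab_{\Aut(H)}(E_2)\longrightarrow \Aut(\lambda_2(G_6))\cong \Aut(G_6)
$$
is surjective: if $\alpha^{-1}$ lifts to some $\psi\in\stab_{\Aut(H)}(E_2)$, then $\theta:=\psi\circ\phi\in\Aut(H)$ satisfies $\theta\circ\lambda_1=\lambda_2$, as required.

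To establish surjectivity of $\rho$, I would argue as follows. Any $\sigma\in\Aut(G_6)$ permutes the six degree-$2$ vertices of $\lambda_2(G_6)$ (which are precisely the endpoints of the missing edges, since $H$ is $3$-regular and $E_2$ is a matching) and the remaining eight vertices of degree $3$ separately. The only thing to check is that the induced permutation of the six degree-$2$ vertices respects the matching $\{a,b\}$ coming from the three edges of $E_2$; once this is granted, $\sigma$ extends uniquely to the three missing edges of $E_2$, producing the desired lift $\psi\in\stab_{\Aut(H)}(E_2)$. To see that the matching is preserved by $\sigma$, the strategy is to show that it is \emph{intrinsic} to $G_6$: using the description of $G_6$ recalled in the proof of Proposition \ref{fact6prime} (its length spectrum $\{({\pi\over3},6),({2\pi\over3},6)\}$ and the fact that, unlike $G_5$, $G_6$ contains an apartment made of length-${\pi\over 3}$ paths), one characterizes the pairs $\{a,b\}\in E_2$ combinatorially inside $G_6$, for instance as the pairs of degree-$2$ vertices interchanged by the local symmetry of $G_6$ at the apex of a suitable short configuration.

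The main obstacle is this last combinatorial verification: one must rule out ``ghost'' automorphisms of $G_6$ that scramble the matching of degree-$2$ vertices. Should the direct characterization above prove delicate, a convenient alternative is an orbit-counting verification: using $|\Aut(H)|=|\PGL(2,7)|=336$ together with the transitivity from Proposition \ref{fact6prime} and an independent computation of $|\Aut(G_6)|$ from Figure \ref{Fano-3}, one checks that $|\mathrm{image}(\rho)|=|\Aut(G_6)|$, which forces $\rho$ to be surjective and thereby proves the lemma.
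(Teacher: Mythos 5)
Your reduction is sound as far as it goes: granting that the proof of Proposition \ref{fact6prime} establishes that the $(2,2,2)$-configurations yielding $G_6$ form a single $\Aut(H)$-orbit, the lemma does reduce to surjectivity of the restriction map $\rho:\stab_{\Aut(H)}(E_2)\to\Aut(G_6)$ (and $\rho$ is automatically injective, since $\lambda_2(G_6)$ contains every vertex of $H$). But the surjectivity of $\rho$ --- equivalently, the fact that every automorphism of $G_6$ preserves the perfect matching on the six degree-$2$ vertices induced by the missing edges --- is the entire content of the lemma, and you do not prove it. Your first strategy (``pairs of degree-$2$ vertices interchanged by the local symmetry at the apex of a suitable short configuration'') is not a characterization that you state precisely or verify; your second strategy (orbit counting) requires counting the $(2,2,2)$-triples of edges in $H$ and computing $|\Aut(G_5)|$ and $|\Aut(G_6)|$ independently, none of which you carry out. (The count can be made to work: there are $84$ such triples, the injection $\stab(E)\hookrightarrow\Aut(G)$ gives orbit sizes at least $336/12=28$ and $336/6=56$, and $28+56=84$ forces equality throughout --- but this is a genuine computation, not a one-line remark.)

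The paper's proof supplies exactly the missing intrinsic characterization, and in a way that makes your transitivity step unnecessary. In the notation of Lemma \ref{l51}, the $2\pi/3$-paths of $G_6$ decompose into a circle $C$ of length $2\pi$ and a tripod $T_{\mathrm u}$; this decomposition is metric, hence preserved by any isomorphism $\theta_0:\lambda_1(G_6)\to\lambda_2(G_6)$. Each missing edge of $H$ joins a vertex of $T_{\mathrm u}^k$ to a vertex of $C^k$, and since $H$ has no cycle of length $<2\pi$, there is a \emph{unique} admissible pairing between these two triples. This uniqueness is precisely what rules out your ``ghost'' automorphisms: $\theta_0$ carries the pairing of $\lambda_1$ to an admissible pairing for $\lambda_2$, which must coincide with the pairing of $\lambda_2$, so $\theta_0$ extends to an automorphism of $H$. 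Applied to $\theta_0=\lambda_2\circ\lambda_1^{-1}$ this gives the lemma directly. You should either import this girth argument to finish your Step 3, or complete the orbit count.
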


\begin{proof}
Let $\theta_0: \lambda_1(G_6)\to \lambda_2(G_6)$ be an isomorphism between the images (note that $G_6$ has non trivial automorphisms). It is easily seen that (in the notations of \ref{l51})  $\theta_0$  restricted to automorphisms $\theta_0':  T_\mathrm{u}^1 \to T_\mathrm{u}^2$ and $\theta_0'':  C^1 \to C^2$ between tripods and circles (which we see as embedded in $H$). Since $H$ has no cycle smaller than $2\pi$, an edge of $H$ not in $i_k(G_6)$ has an extremity in $T_\mathrm{u}^k$ and the other $C^k$. Furthermore, there is a unique way to realize the pairing. The pairing associated with $i_1(G_6)$ induces a pairing for $i_2(G_6)$ using  $\theta_0'$ and $\theta_0''$. By unicity  this pairing coincides with that associated with $i_2(G_6)$. This shows that $\theta_0$ extends to an automorphism of $H$.
\end{proof}

\begin{lemma}
The buildings  ${X_6^0}'$ and ${X_6^1}'$ associated to $X_6^0$ and $X_6^1$ are not isomorphic.
\end{lemma}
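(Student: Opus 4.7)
My strategy is to extract from each extension a finite invariant that can be computed directly from its combinatorics, and then argue that this invariant must differ for isometric buildings. Since both extensions come equipped with free, simply-transitive vertex-actions of $\G_6^{i\prime}$, it is natural to look at the homology of the quotient polyhedron $V_6^{i\prime} = X_6^{i\prime}/\G_6^{i\prime}$.

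I would first write down an explicit presentation of $\G_6^{i\prime}$. Since the extension attaches a single equilateral triangular chamber glued along the three diagonals of the lozenges of $V_6^i$, one obtains $\G_6^{i\prime}$ from $\G_6^i$ by introducing three new generators $d_1,d_2,d_3$ (one per diagonal, each expressible as a word in the old generators via the two triangular halves of a single lozenge) and adding a single length-three relator $d_1d_2d_3$ coming from the new chamber. I would then compute the abelianization $H_1(V_6^{i\prime},\ZI) = \G_6^{i\prime,\ab}$; the calculation parallels the one already carried out for $H_1(V_6^i,\ZI)$, and I expect the two resulting abelian groups to differ in their $2$-torsion, in a manner mirroring the distinction $\ZI\times\ZI/2\ZI$ versus $\ZI\times(\ZI/2\ZI)^2$ observed before the extension.

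The core difficulty is to promote this inequality of group invariants to non-isometry of the buildings themselves: a priori, two non-isomorphic groups can act simply transitively on the same $\tilde A_2$-building. To handle this, I would combine the local uniqueness Lemma \ref{l54} with a finite classification, in the spirit of Proposition \ref{fact6prime}, of all one-vertex polyhedra with $7$ edges and $7$ triangles whose universal cover is an $\tilde A_2$-building of order $2$. If this list is sufficiently short, any hypothetical isometry $\phi\colon X_6^{0\prime}\to X_6^{1\prime}$ would conjugate $\G_6^{0\prime}$ to a vertex-transitive subgroup of $\Aut(X_6^{1\prime})$ with quotient lying in this list, and the homological invariant from the previous step would contradict the existence of $\phi$.

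The main obstacle is this last enumeration, which reduces to the combinatorial problem of assembling $7$ equilateral triangles around a single vertex so that the resulting link is the Fano incidence graph $H$. I would approach it by a face-chasing argument of the type used in Lemma \ref{l51}, then check that the homology $H_1$ separates $V_6^{0\prime}$ and $V_6^{1\prime}$ within the resulting short list.
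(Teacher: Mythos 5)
There is a genuine gap. Your invariant $H_1(V_6^{i\prime},\ZI)$ is an invariant of the \emph{pair} (building, chosen vertex-transitive lattice), not of the building ${X_6^i}'$ itself, and the enumeration you propose does not close this gap: even with a complete list of one-vertex polyhedra built from $7$ triangles whose links are the Fano graph, you would still need to decide which of the two order-$2$ buildings each item of the list covers — which is exactly the statement to be proved, and is not detected by $H_1$ of the quotient. The paper itself exhibits the failure mode concretely: the complex $V_6^3$ introduced right after this lemma has $H_1(V_6^3,\ZI)\simeq \ZI\times\ZI/7\ZI$, different from $H_1(V_6^1,\ZI)\simeq\ZI\times\ZI/2\ZI\times\ZI/2\ZI$, and yet the associated building ${X_6^3}'$ \emph{is} isometric to ${X_6^1}'$. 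So two combinatorially distinct one-vertex quotients with non-isomorphic fundamental groups can extend to the same building, and an abelianization mismatch between ${V_6^0}'$ and ${V_6^1}'$ cannot by itself rule out an isometry $\phi\colon {X_6^0}'\to{X_6^1}'$; the conjugate $\phi\,{\G_6^0}'\,\phi^{-1}$ would simply be a second, non-conjugate vertex-transitive lattice in $\Aut({X_6^1}')$, which is a perfectly possible configuration for $\tilde A_2$ buildings of order $2$.

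What is needed is an isometry invariant of the building itself, and this is what the paper uses: by a theorem of Tits, the spheres of radius $2$ in $\tilde A_2$ buildings of order $2$ fall into exactly two isomorphism classes, corresponding to the buildings of $\GL_3(\QI_2)$ and $\GL_3(\FI_2((y)))$, and one checks (e.g.\ via the tables of \cite{toulouse}) that the radius-$2$ spheres of ${X_6^0}'$ and ${X_6^1}'$ lie in different classes. This is a purely local computation on the extended complexes and bypasses the lattice entirely. Your presentation of ${\G_6^i}'$ (subdivide each lozenge by its diagonal, then attach one triangle along the three diagonals, giving $7$ generators and $7$ relators) is correct and useful for other purposes, but to salvage your route you would have to replace $H_1$ of the quotient by some invariant you can prove is preserved by isometries of the universal cover — and the radius-$2$ sphere is precisely such an invariant.
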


\begin{proof}
By a theorem of Tits \cite{tits-sphere},  the spheres of radius 2 in buildings of type $\tilde A_2$ and order 2 fall into two isomorphism classes, corresponding respectively to the building of $\GL_3(\QI_2)$ and $\GL_3(\FI_2((y)))$. It turns out (using for example  \cite[p. 580]{toulouse}) that  the sphere of radius 2 of ${X_6^0}'$ and ${X_6^1}'$ correspond to different isomorphism classes. In particular ${X_6^0}'$ and ${X_6^1}'$ are not isomorphic.  
\end{proof}

Combining the above lemmas, we have now proved Theorem \ref{th2} (modulo Theorem \ref{th:loccrit}).

\begin{remarks}\begin{enumerate}
\item Let ${V_6^1}'={X_6^1}'/{\G_6^1}'$ be the complex corresponding to the above unique extension of $(X_6^1,\G_6^1)$. One can show that the automorphism group of ${V_6^1}'$ is \emph{transitive on faces}.  
In particular, any two buildings with one chamber missing which admit an extension to $({X_6^1}', {\G_6^1}')$ are isomorphic.  
\item We saw in the spherical case, that degenerate buildings with chambers missing were not attached to a well-defined Coxeter diagram. Concerning degeneracy (=hyperbolicity) in the Euclidean case,  one can deform the metric on $V_0$ or $V_1$ to obtain a complex made of $(2,4,4)$ triangles. The resulting universal is also hyperbolic, in fact, it is equivariantly quasi-isometric to $X_0$ or $X_1$. But this is not a CAT(0) space as is easily checked (the link contains exactly one circle of length $3\pi\over 2$).  
\end{enumerate}
\end{remarks}

 As mentioned in the introduction, it would be interesting to classify Euclidean buildings with a single chamber missing that are hyperbolic. We observe that there is another remarkable complex that falls in this family and is associated to $G_6$:
 \begin{figure}[h]
\begin{tikzpicture}[y=.8cm,scale=.9]
\node at (-.9,0) {$V_6^3=$};
\draw[-latex] (1,0) -- node[anchor=west]{\tiny{1}} (0.5,0.866);
\draw[-latex] (0.5,0.866) -- node[anchor=east]{\tiny{2}} (0,0);
\draw[-latex] (1,0) -- node[anchor=west]{\tiny{3}} (0.5,-0.866);
\draw[-latex] (0.5,-0.866) -- node[anchor=east]{\tiny{3}} (0,0);
\shade[ball color=black] (0,0) circle (0.3ex);
\shade[ball color=black] (1,0) circle (0.3ex);
\shade[ball color=black] (0.5,0.866) circle (0.3ex);
\shade[ball color=black] (0.5,-0.866) circle (0.3ex);
\draw[-latex] (2.6,0) -- node[anchor=west]{\tiny{1}} (2.1,0.866);
\draw[-latex] (2.1,0.866) -- node[anchor=east]{\tiny{3}} (1.6,0);
\draw[-latex] (2.6,0) -- node[anchor=west]{\tiny{4}} (2.1,-0.866);
\draw[-latex] (2.1,-0.866) -- node[anchor=east]{\tiny{4}} (1.6,0);
\shade[ball color=black] (1.6,0) circle (0.3ex);
\shade[ball color=black] (2.6,0) circle (0.3ex);
\shade[ball color=black] (2.1,0.866) circle (0.3ex);
\shade[ball color=black] (2.1,-0.866) circle (0.3ex);
\draw[-latex] (4.2,0) -- node[anchor=west]{\tiny{1}} (3.7,0.866);
\draw[-latex] (3.7,0.866) -- node[anchor=east]{\tiny{4}} (3.2,0);
\draw[-latex] (4.2,0) -- node[anchor=west]{\tiny{2}} (3.7,-0.866);
\draw[-latex] (3.7,-0.866) -- node[anchor=east]{\tiny{2}} (3.2,0);
\shade[ball color=black] (3.2,0) circle (0.3ex);
\shade[ball color=black] (4.2,0) circle (0.3ex);
\shade[ball color=black] (3.7,0.866) circle (0.3ex);
\shade[ball color=black] (3.7,-0.866) circle (0.3ex);
\end{tikzpicture}
\end{figure}

\noindent It gives rise a third hyperbolic building with one chamber missing $(X_6^3,\G_6^3)$ that satisfy all assertions of Theorem \ref{th2}. One can show that in fact the corresponding building ${X_6^3}'$ is  isometric to ${X_6^1}'$. The abelianization group is $H_1(V_6^3,\ZI)\simeq \ZI\times \ZI/7\ZI$.

We will now study complexes associated with the other links, in particular $G_1$, $G_3$ and $G_4$, which also provide a few additional hyperbolic examples. For each of these 3 graphs, we give one example of a simplicial complex which is a CAT(0) space that is transitive on vertices and has the given link.   
The strategy to discover these complexes is to start from the link and guess what are the possible shapes that could be used to obtain the length spectrum of the link.  Then it remains to label the edges in order to have the correct link.

We start with $G_1$. The length spectrum is $\{({\pi\over 3}, 9),(\pi,3)\}$. Consider the following complex. 
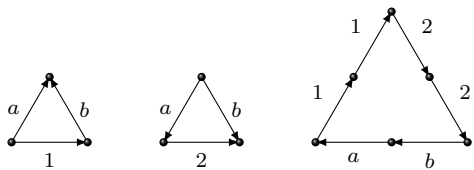
\begin{figure}[h]
\centering
\begin{tikzpicture}

\begin{scope}[xshift=-2cm,yshift=0cm]
\draw[-latex] (1,0) -- node[anchor=west]{\tiny{$b$}} (0.5,0.866);
\draw[latex-] (0.5,0.866) -- node[anchor=east]{\tiny{$a$}} (0,0);
\draw[-latex] (0,0) -- node[anchor=north]{\tiny{1}} (1,0);
\shade[ball color=black] (0,0) circle (0.3ex);
\shade[ball color=black] (1,0) circle (0.3ex);
\shade[ball color=black] (0.5,0.866) circle (0.3ex);
\end{scope}

\begin{scope}[xshift=0cm,yshift=0cm]
\draw[latex-] (1,0) -- node[anchor=west]{\tiny{$b$}} (0.5,0.866);
\draw[-latex] (0.5,0.866) -- node[anchor=east]{\tiny{$a$}} (0,0);
\draw[-latex] (0,0) -- node[anchor=north]{\tiny{2}} (1,0);
\shade[ball color=black] (0,0) circle (0.3ex);
\shade[ball color=black] (1,0) circle (0.3ex);
\shade[ball color=black] (0.5,0.866) circle (0.3ex);
\end{scope}

\begin{scope}[xshift=3cm]
\path (0:1cm) coordinate (P0);
\path (1*60:1cm) coordinate (P1);
\path (2*60:1cm) coordinate (P2);
\path (3*60:1cm) coordinate (P3);
\path (0,0) coordinate (P4);
\path (0,2*0.866) coordinate (P5);
\draw [latex-] (P0) -- node[anchor=south west]{\tiny{2}}  (P1);
\draw [-latex] (P5) -- node[anchor=south west]{\tiny{2}}  (P1);
\draw [latex-] (P5) -- node[anchor=south east]{\tiny{1}}  (P2);
\draw [-latex] (P3) -- node[anchor=south east]{\tiny{1}}  (P2);
\draw [-latex] (P4) -- node[anchor=north]{\tiny{$a$}}  (P3);
\draw [-latex] (P0) -- node[anchor=north]{\tiny{$b$}}  (P4);
\foreach \i in {0,...,5} \shade[ball color=black] (P\i) circle (0.3ex);
\end{scope}
\end{tikzpicture}
\caption{The complex $V_1$ with link $G_1$}\label{fig7}
\end{figure}

\noindent
It is built out of three equilateral triangles, one of them being scaled up (mind the opposite orientation of edges on the two smaller triangles). It can be shown that:

\begin{proposition}\label{prop- v1 hyperbolic} The universal cover $X_1=\tilde V_1$ of $V_1$ is hyperbolic. 
\end{proposition}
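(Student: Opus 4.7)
The plan is to verify that $X_1$ is CAT(0) and then apply the no-flat-plane criterion \cite{BH}. The CAT(0) property is immediate: since $G_1$ is a subgraph of the Fano incidence graph $H$ (which has girth $2\pi$), the girth of $G_1$ is also $\geq 2\pi$, so $V_1$ is locally CAT(0) and Cartan--Hadamard gives global CAT(0) on $X_1$.

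For hyperbolicity, suppose for contradiction that there is an isometrically embedded flat plane $F \subset X_1$; by cocompactness of the $\G$-action one may take $F$ periodic. First I classify the apartments of $G_1$, i.e.\ the cycles of length $2\pi$. In the link metric, $G_1$ has nine short edges of length $\pi/3$ (from the equilateral corners of the three triangles of $V_1$) and three long edges of length $\pi$ (from the flat corners at the midpoints of the three sides of the scaled-up triangle). Solving $k \cdot \pi/3 + \ell \cdot \pi = 2\pi$ with $k+\ell \geq 3$ forces $(k,\ell) \in \{(6,0),(3,1)\}$. Thus at every vertex $v \in F$, either six $\pi/3$-corners meet in the standard hexagonal pattern, or one side of the big triangle passes straight through $v$ and three $\pi/3$-corners fill the opposite half-disk.

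From here I would face-chase in the spirit of the argument just used for the complexes $V_6^i$, tracking the edge labels $1, 2, a, b$ along candidate flat strips in $F$. Each side of the big triangle carries a specific pair of labels, either $(1,1)$, $(2,2)$, or $(a,b)$; the only faces of $V_1$ other than the big triangle containing the edge $1$ (resp.\ $2$) are the two small triangles, and their $a,b$ orientations are reversed. Following a family of parallel geodesics along the edges labelled $1$ (or $2$), one checks by a finite case analysis that every flat strip in $X_1$ has width bounded by a universal constant (a small multiple of $\sqrt{3}$), contradicting the existence of $F$.

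The main obstacle will be the case analysis itself: one has to enumerate the possible local patterns around a side of the big triangle and verify in each case that the flat strip on the opposite side cannot be widened indefinitely, using the label constraints together with the allowed link cycles of type $(6,0)$ and $(3,1)$. Once this finite check is completed, the no-flat-plane criterion yields hyperbolicity of $X_1$.
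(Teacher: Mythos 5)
The paper does not actually write out a proof of this proposition --- after the analogous statement for $V_4$ it says both proofs are ``left to the reader'' --- so the only internal comparison is with the proof of the hyperbolicity lemma for $X_6^0$ and $X_6^1$, and your plan does follow that template: establish CAT(0) from the girth of the link, then rule out flat planes by bounding the width of flat strips via a label-chasing case analysis. Your preliminary steps are sound: the CAT(0) claim is correct, and the classification of the $2\pi$-cycles of $G_1$ into types $(6,0)$ and $(3,1)$ is right (the case $(0,2)$ is excluded not because ``a cycle needs at least three edges'' but because the three long $\pi$-paths of $G_1$ join three pairwise distinct pairs of branch vertices, so no bigon of two $\pi$-paths exists; this deserves a sentence). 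The reduction to a \emph{periodic} flat is both unnecessary --- the no-flat criterion of \cite{BH} needs only the existence of some flat plane --- and not an immediate consequence of cocompactness, so it is better dropped.

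The genuine gap is that the decisive step is announced rather than carried out. Everything up to ``From here I would face-chase'' is routine; the entire content of the proposition lives in the finite verification you defer (``Once this finite check is completed\dots''), namely that the flat strips along the label-$1$ (or label-$2$) geodesics cannot be widened indefinitely. Two points that this case analysis must confront, and that your sketch gets slightly wrong or glosses over: (i) edge $1$ lies on the \emph{first} small triangle only and edge $2$ on the \emph{second} only, not on ``the two small triangles'' each; (ii) more importantly, edge $1$ (resp.\ $2$) is incident to the big triangle \emph{with multiplicity two}, as the two halves of one subdivided side, so the three faces around edge $1$ are small-$1$, big, big. This self-incidence is exactly what makes the local picture around the midpoint vertices (the $(3,1)$ links) delicate, and it is precisely where the strip-width argument either closes or fails. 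As it stands, the proposal is a correct and well-chosen plan, consistent with the paper's method for the $G_6$ complexes, but it is not yet a proof.
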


The group $\G_1$ admits the presentation
\[
\G_1=\langle s, t\mid ts^{-1}t^{-2}s=s^2  t^2\rangle
\]
and has $H_1(\G_1,\ZI)=\ZI$. The group $\G_1$ doesn't contain a copy of $\ZI^2$.

Consider now the graph $G_3$, which has length spectrum $\{ ({\pi\over 3}, 8),({2\pi\over 3},3),({4\pi\over 3},1)$. The following complex $V_3$ has link $G_3$ (note that the orientation is reversed on the first rhombus which leads to having ${4\pi\over 3}$ belonging to the spectrum).

\begin{figure}[h]
\centering
\begin{tikzpicture}

\begin{scope}[xshift=1cm,yshift=-.5cm]
\draw[-latex] (1,0) -- node[anchor=west]{\tiny{4}} (0.5,0.866);
\draw[-latex] (0.5,0.866) -- node[anchor=east]{\tiny{5}} (0,0);
\draw[-latex] (0,0) -- node[anchor=north]{\tiny{3}} (1,0);
\shade[ball color=black] (0,0) circle (0.3ex);
\shade[ball color=black] (1,0) circle (0.3ex);
\shade[ball color=black] (0.5,0.866) circle (0.3ex);
\end{scope}

\begin{scope}[xshift=-1cm,yshift=-.5cm]
\draw[-latex] (1,0) -- node[anchor=west]{\tiny{2}} (0.5,0.866);
\draw[-latex] (0.5,0.866) -- node[anchor=east]{\tiny{2}} (0,0);
\draw[-latex] (0,0) -- node[anchor=north]{\tiny{5}} (1,0);
\shade[ball color=black] (0,0) circle (0.3ex);
\shade[ball color=black] (1,0) circle (0.3ex);
\shade[ball color=black] (0.5,0.866) circle (0.3ex);
\end{scope}

\begin{scope}[xshift=-3cm]
\draw[-latex] (1,0) -- node[anchor=west]{\tiny{2}} (0.5,0.866);
\draw[-latex] (0.5,0.866) -- node[anchor=east]{\tiny{1}} (0,0);
\draw[latex-] (0.5,-0.866) -- node[anchor=east]{\tiny{3}} (0,0);
\draw[latex-] (1,0) -- node[anchor=west]{\tiny{1}} (0.5,-0.866);
\shade[ball color=black] (0,0) circle (0.3ex);
\shade[ball color=black] (1,0) circle (0.3ex);
\shade[ball color=black] (0.5,0.866) circle (0.3ex);
\shade[ball color=black] (0.5,-0.866) circle (0.3ex);
\end{scope}

\begin{scope}[xshift=3cm]
\draw[-latex] (1,0) -- node[anchor=west]{\tiny{5}} (0.5,0.866);
\draw[-latex] (0.5,0.866) -- node[anchor=east]{\tiny{4}} (0,0);
\draw[-latex] (0.5,-0.866) -- node[anchor=east]{\tiny{3}} (0,0);
\draw[-latex] (1,0) -- node[anchor=west]{\tiny{4}} (0.5,-0.866);
\shade[ball color=black] (0,0) circle (0.3ex);
\shade[ball color=black] (1,0) circle (0.3ex);
\shade[ball color=black] (0.5,0.866) circle (0.3ex);
\shade[ball color=black] (0.5,-0.866) circle (0.3ex);
\end{scope}

\end{tikzpicture}
\caption{The complex $V_3$ with link $G_3$}\label{fig4}
\end{figure}
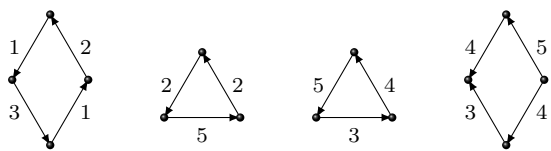

The group $\G_3$ is also a 1-relator group with presentation
\[
\G_3=\langle s, t\mid  s  t  s  t^2  s  t  s  =t^2stst^{2}\rangle
\]
and has $H_1(\G_3,\ZI)=\ZI$. However, contrary to the case of $V_1$, we have

\begin{proposition}
The group $\G_3$ contains a copy of $\ZI^2$.
\end{proposition}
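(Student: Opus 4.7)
The aim is to exhibit a $\G_3$-periodic flat plane in the universal cover $X_3 = \tilde V_3$, which by the flat torus theorem yields $\ZI^2 \hookrightarrow \G_3$.

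First, I would verify that $X_3$ is CAT(0). A face-chasing argument in the style of Lemma~\ref{l51} shows that $V_3$ has a single vertex whose link is $G_3$, a spherical building with three chambers missing and so of girth $\geq 2\pi$; Gromov's link condition together with the Hadamard--Cartan theorem then makes $X_3$ CAT(0). In this CAT(0) setting it is enough to produce an immersed locally convex flat torus $T \to V_3$ (allowing a pass to a finite cover if needed), since local convexity in a CAT(0) complex forces $\pi_1$-injectivity, so that $\pi_1(T) = \ZI^2$ embeds into $\G_3$.

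Second, I would locate the flat torus by exploiting the length spectrum of $G_3$, namely $\{(\pi/3,8),(2\pi/3,3),(4\pi/3,1)\}$. The long arc of length $4\pi/3$ together with the arcs of length $2\pi/3$ is rich enough for an embedded circle of length exactly $2\pi$ to survive inside $G_3$; this is precisely an apartment of the Fano incidence graph which is not destroyed by the removal of the three chambers. Such an apartment corresponds to a genuine flat tangent plane at the unique vertex of $V_3$. Following, in both directions of this apartment, the triangles of $V_3$ whose corners realize the chosen directions, I would assemble a connected union $F_0$ of (some of the) six triangles of $V_3$ tiling a Euclidean parallelogram, whose boundary identifications close $F_0$ into an immersed flat torus $T$ in $V_3$. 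The two generators of $\pi_1(T)$, read as words in $s$ and $t$ along two transverse sides of $F_0$, then give two commuting elements of $\G_3$ of infinite order with independent translation axes, and these generate the required $\ZI^2$.

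The main obstacle is the explicit combinatorial search for $F_0$: one must identify which circle of length $2\pi$ inside $G_3$ extends globally to a flat torus in $V_3$, and then verify that the labels and orientations on opposite sides of $F_0$ match. Because $V_3$ only contains six triangles and five edge-labels, this is a finite case analysis, but it is where the bulk of the argument lies; an algebraic shortcut would be to exhibit two explicit commuting words in $\G_3$ and verify the commutation modulo the relation $stst^2sts = t^2stst^2$, at the cost of hiding the geometric picture behind the $\ZI^2$.
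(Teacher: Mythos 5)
Your overall strategy (an immersed locally convex flat torus in $V_3$, hence a $\pi_1$-injective $\ZI^2$ in $\G_3$) is sound in principle, but the proposal has a genuine gap exactly where you place ``the bulk of the argument'': the existence of the closed-up flat torus is never established, and the justification you offer in its place is insufficient. You argue that a circle of length exactly $2\pi$ (a surviving apartment of the Fano incidence graph) exists in $G_3$ and that this gives a genuine flat tangent plane. But all of the links $G_1,\dots,G_6$ have girth exactly $2\pi$ and contain surviving apartments --- including $G_1$, $G_4$ and $G_6$, whose associated complexes $X_1$, $X_4$ and $X_6^i$ are shown elsewhere in the paper to be \emph{hyperbolic}, hence to contain no $\ZI^2$ at all. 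So the presence of a $2\pi$-cycle in the link is local data that does not distinguish $V_3$ from the hyperbolic examples; what must be checked is that the flat germ propagates through the particular edge-labelings and orientations of the six triangles of $V_3$ and closes up into a torus, and that finite case analysis is precisely the whole content of the proposition. Until it is carried out (or until two explicit commuting words are exhibited), no proof has been given.

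For comparison, the paper bypasses the geometry entirely and takes exactly the ``algebraic shortcut'' you mention in your last sentence: setting $x=sts$, the relator $stst^2sts=t^2stst^2$ becomes $xt^2x=t^2xt^2$, and then
$x(t^2x)^3=(xt^2x)(t^2xt^2)x=(t^2xt^2)(xt^2x)x=(t^2x)^3x$,
so $sts$ and $(t^2x)^3=t^2stst^2stst^2sts$ commute and generate the desired copy of $\ZI^2$. If you want to keep the geometric picture, note that the axes of these two explicit elements span the periodic flat you were searching for, so the algebra also certifies which apartment of $G_3$ is the one that extends globally.
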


\begin{proof}
Let $x=sts$ so that the defining relation of $\G_3$ can be rewritten as $xt^2x=t^2xt^2$. We have
\begin{align*}
x(t^2x)^3&=xt^2xt^2xt^2x\\
&=(xt^2x)(t^2xt^2)x\\
&=(t^2xt^2)(xt^2x)x\\
&=(t^2x)^3x.
\end{align*}
It follows that the subgroup of $\G_3$ generated by $sts$ and $t^2stst^2stst^2sts$ is isomorphic to $\ZI^2$.
\end{proof}

Our last complex is associated with $G_4$ which has length spectrum $\{ ({\pi\over 3}, 7),({2\pi\over 3},4),(\pi,1)\}$.

\begin{figure}[h]
\centering
\begin{tikzpicture}

\begin{scope}[xshift=0cm,yshift=0cm]
\draw[-latex] (1,0) -- node[anchor=west]{\tiny{4}} (0.5,0.866);
\draw[-latex] (0.5,0.866) -- node[anchor=east]{\tiny{3}} (0,0);
\draw[-latex] (0,0) -- node[anchor=north]{\tiny{2}} (1,0);
\shade[ball color=black] (0,0) circle (0.3ex);
\shade[ball color=black] (1,0) circle (0.3ex);
\shade[ball color=black] (0.5,0.866) circle (0.3ex);
\end{scope}

\begin{scope}[xshift=2cm,yshift=.5cm]
\draw[-latex] (1,0) -- node[anchor=west]{\tiny{4}} (0.5,0.866);
\draw[-latex] (0.5,0.866) -- node[anchor=east]{\tiny{2}} (0,0);
\draw[-latex] (0.5,-0.866) -- node[anchor=east]{\tiny{4}} (0,0);
\draw[-latex] (1,0) -- node[anchor=west]{\tiny{3}} (0.5,-0.866);
\shade[ball color=black] (0,0) circle (0.3ex);
\shade[ball color=black] (1,0) circle (0.3ex);
\shade[ball color=black] (0.5,0.866) circle (0.3ex);
\shade[ball color=black] (0.5,-0.866) circle (0.3ex);
\end{scope}

\begin{scope}[xshift=-2cm,yshift=0cm]
\path (0:1cm) coordinate (P0);
\path (1*60:1cm) coordinate (P1);
\path (2*60:1cm) coordinate (P2);
\path (3*60:1cm) coordinate (P3);
\path (0,0) coordinate (P4);
\draw [-latex] (P1) -- node[anchor=west]{\tiny{3}}  (P0);
\draw [-latex] (P1) -- node[anchor=south]{\tiny{1}}  (P2);
\draw [-latex] (P3) -- node[anchor=east]{\tiny{2}}  (P2);
\draw [-latex] (P4) -- node[anchor=north]{\tiny{1}}  (P3);
\draw [-latex] (P0) -- node[anchor=north]{\tiny{1}}  (P4);
\foreach \i in {0,...,4} \shade[ball color=black] (P\i) circle (0.3ex);
\end{scope}
\end{tikzpicture}
\caption{The complex $V_4$ with link $G_4$}\label{fig5}
\end{figure}
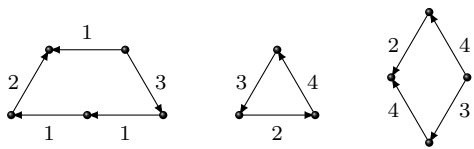

The group $\G_4$ is also a 1-relator group with presentation
\[
\G_4=\langle s, t\mid  s^{-1} t  s^2  t  s^{-2}  t^{-1}  s  t  s^{-1}  t  s^2 t^{-1}= e\rangle
\]
and has $H_1(\G_4,\ZI)=\ZI$

\begin{proposition} The universal cover $X_4=\tilde V_4$ of $V_4$ is hyperbolic. 
\end{proposition}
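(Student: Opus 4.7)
The plan is to apply the flat plane criterion for CAT(0) $2$-complexes (see \cite{BH}): since $\G_4$ acts cocompactly on $X_4$, it suffices to show that every flat strip in $X_4$ has uniformly bounded width, which is equivalent to $X_4$ containing no isometrically embedded copy of $\RI^2$, and hence to hyperbolicity.

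First I would verify that $X_4$ is CAT(0). The link $G_4$ is obtained from the incidence graph $H$ of the Fano plane by removing three edges, so its girth is at least $2\pi$ (the girth of $H$). By the link condition at the unique vertex of $V_4$, the universal cover $X_4$ is locally CAT(0), hence CAT(0) by Cartan--Hadamard since $X_4$ is simply connected.

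Next I would enumerate the embedded circles of length $2\pi$ in $G_4$. Since $G_4$ has length spectrum $\{(\pi/3, 7), (2\pi/3, 4),(\pi, 1)\}$ and since the non-branching path of length $\pi$ is unique in $G_4$, every such circle either uses this long arc or avoids it, which yields a short explicit list. At each vertex of a putative flat the trace of the flat in the link must be one of these $2\pi$-cycles, so this list controls the local combinatorics of any flat in $X_4$.

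The core step is to bound the width of flat strips. Following the strategy used in the proof of hyperbolicity of $X_6^0$ and $X_6^1$, I would distinguish the geodesic lines of $X_4$ whose edges carry a specific label, for instance label $1$, which appears only on the boundary of the trapezoidal face $A$ (three times, forming two parallel horizontal sides). The corresponding family of parallel geodesics is bounded by a small family of flat strips whose combinatorics is dictated by the gluings of $A$ to itself and to the triangular face $B$ and the lozenge face $C$ along shared edges. Tracking the periodic label patterns on the opposite boundary of each such strip, and using the enumeration of $2\pi$-cycles from the previous step to check which patterns can be extended, I would obtain an explicit upper bound on the width of any flat strip containing a $1$-geodesic. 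An analogous analysis for geodesics formed by chaining edges of the other labels (in particular label $4$, which appears twice on the boundary of the lozenge) handles the remaining directions, and the flat plane theorem then concludes.

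The hard part will be the case-by-case combinatorial analysis in the last step: unlike the complexes $V_6^i$, which are built from three identical rhombic faces, $V_4$ mixes three distinct face shapes, including the trapezoid with a subdivided base whose interior corner angle at the midpoint is $\pi$. This makes the enumeration of admissible face configurations along a candidate flat strip more delicate. Nevertheless, the uniqueness of the $\pi$-arc in $G_4$ together with the asymmetric label distribution across faces $A$, $B$, $C$ should provide enough rigidity to force a uniform upper bound on strip widths, and hyperbolicity of $X_4$ then follows.
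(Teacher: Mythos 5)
The paper does not actually prove this proposition: it states it and then writes ``We leave the proof of this proposition as well as that of Proposition \ref{prop- v1 hyperbolic} to the reader,'' with the only worked model being the hyperbolicity proof for $X_6^0$ and $X_6^1$ (no-flat criterion plus an analysis of the flat strips bordering a distinguished family of labelled geodesics). Your outline follows exactly that model, and the preliminary steps (CAT(0) via the girth of $G_4$, the role of the $2\pi$-cycles of the link in constraining flats) are correct.

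However, as written your argument has a genuine gap at its core step, and it is precisely the step you defer. In the $V_6^i$ case the argument works because \emph{every} face carries an edge labelled $1$, so every putative flat plane must meet the family of $1$-geodesics, and bounding the width of the strips along those geodesics kills all flats at once. In $V_4$ the label $1$ occurs only on the trapezoidal face; the triangle carries $\{2,3,4\}$ and the lozenge $\{2,3,4,4\}$. So bounding strips around $1$-geodesics says nothing about a hypothetical flat tiled entirely by triangles and lozenges, and this case must be excluded separately --- either by showing (via the enumeration of $2\pi$-cycles in $G_4$ you propose) that every $2\pi$-cycle necessarily uses the unique $\pi$-arc, which would force every flat to pass through trapezoid corners and feed back into the label-$1$ analysis, or by an independent strip-width bound for geodesics carrying the other labels. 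Your phrase ``an analogous analysis for the other labels \dots\ handles the remaining directions'' and the concluding ``should provide enough rigidity'' are exactly where the proof lives; until that case analysis is actually carried out and shown to terminate with a uniform width bound, the proposition is not established. The plan is the right one, but it is a plan, not yet a proof.
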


We leave the proof of this proposition as well as that of Proposition \ref{prop- v1 hyperbolic} to the reader.

Remain the two graphs $G_2$ and $G_5$. The case of $G_2$ is similar and will be treated in a forthcoming section. It is tempting to try the same approach on the last graph $G_5$ (which has the same length spectrum as $G_6$), but this doesn't work. We refer to Section \ref{Sectinvert} for a precise description of the situation in this case.

\section{The local rank functional on metric graphs}\label{CATrank}

We introduce a notion of ``CAT(0) rank" for an arbitrary metric graph of girth at least $2\pi$.  This generalizes and subsumes the qualitative definitions of a similar concept of intermediate local rank introduced in \cite{rd}.

Let $L$ be a locally finite metric graph, denote by $S$ the set of singular vertices of $L$ (where we call \emph{singular} a vertex of valency at least 3) and let 
\[
\Phi_L=\{\alpha : [0,\pi] \inj L,~\alpha(0)\in S\} 
\]
be the set of paths of length $\pi$ in $L$ starting from a singular vertex, where two paths are considered distinct if they have distinct images in $L$.
We assume that the girth of $L$ is at least $2\pi$.  
We also assume that $\Phi_L$ is nonempty.  Observe that if $\Phi_L$ is empty, then $L$ is ``degenerate":  it is either a union of circles or segments (if $S$ is empty), or trees of radius $<\pi$ around some point of $S$ (if $S$ is nonempty). We sometimes (improperly) refer to $\Phi_L$ as the \emph{root system} for $L$, and to its elements as \emph{roots}.

For a vertex $v\in S$ denote by $q_v$ its \emph{order} in $L$, that is, $q_v=\val_L(v)-1$ where $\val_L$ is the valency. For $\alpha\in \Phi_L$ we write $q_\alpha=q_{\alpha(0)}$.

\begin{definition}
We call \emph{rank} of an element $\alpha\in \Phi_L$  the number 
\[
\rk(\alpha)=1+{N(\alpha)\over q_\alpha}
\] 
where $N(\alpha)$ is the number of path of length $\pi$ in $L$ distinct from $\alpha$ whose extremity coincide with that of $\alpha$. Formally:
\[
N(\alpha)=|\{\beta\in \Phi_L\mid \alpha\neq \beta, \alpha(0)=\beta(0), \alpha(\pi)=\beta(\pi)\}|. 
\] 
\end{definition}

Note that:

\begin{fact}
For every $\alpha\in \Phi_L$ we have $N(\alpha)\leq q_\alpha$. 
\end{fact}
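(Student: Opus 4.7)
The plan is to exploit the girth hypothesis $\geq 2\pi$ to bound the number of initial edges used at $v=\alpha(0)$ by the family consisting of $\alpha$ together with its $N(\alpha)$ competitors. More precisely, I would show that the $N(\alpha)+1$ paths in $\Phi_L$ sharing endpoints $v=\alpha(0)$ and $w=\alpha(\pi)$ must all leave $v$ along \emph{pairwise distinct} edges. Since $v$ has exactly $\val_L(v)=q_\alpha+1$ incident edges, this immediately yields $N(\alpha)+1\leq q_\alpha+1$.

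To establish the pairwise distinctness of initial edges, I would argue by contradiction: suppose two such paths $\gamma\neq \gamma'$ (each either $\alpha$ or one of the $\beta$'s) shared their initial edge at $v$. Let
\[
s=\sup\{t\in[0,\pi]\mid \gamma|_{[0,t]}=\gamma'|_{[0,t]}\}.
\]
By the shared-edge assumption $s>0$, and by $\gamma\neq\gamma'$ together with the common endpoint condition $\gamma(\pi)=w=\gamma'(\pi)$ one has $s<\pi$. Setting $p=\gamma(s)=\gamma'(s)$, the restrictions $\gamma|_{[s,\pi]}$ and $\gamma'|_{[s,\pi]}$ are two paths from $p$ to $w$ of common length $\pi-s$, diverging immediately past $p$ by maximality of $s$. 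Their concatenation $\gamma|_{[s,\pi]}\cdot \overline{\gamma'|_{[s,\pi]}}$ is therefore a nontrivial closed walk based at $p$ of total length $2(\pi-s)<2\pi$.

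The final step is the standard graph-theoretic fact that any nontrivial closed walk in a graph contains an embedded cycle of no greater length; applied here, it produces a simple cycle in $L$ of length strictly less than $2\pi$, contradicting the standing hypothesis that $L$ has girth at least $2\pi$. Hence the $N(\alpha)+1$ paths leave $v$ along pairwise distinct edges, and the inequality follows. The only point that requires a little care is this last ``embedded-cycle'' extraction, which must handle possible re-meetings of $\gamma|_{[s,\pi]}$ and $\gamma'|_{[s,\pi]}$ beyond $p$; but such re-meetings can only produce \emph{shorter} simple cycles within the same closed walk, so the girth contradiction persists.
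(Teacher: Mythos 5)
Your proposal is correct. Note that the paper itself offers no proof of this Fact — it is stated as an unproved observation — so there is no argument to compare against; yours is the natural one. The key point, that the $N(\alpha)+1$ injective paths of length $\pi$ from $v=\alpha(0)$ to $w=\alpha(\pi)$ must use pairwise distinct initial edges at $v$, immediately gives $N(\alpha)+1\leq \val_L(v)=q_\alpha+1$, and your girth argument for distinctness of initial edges is sound: if $\gamma,\gamma'$ agreed on an initial segment of positive length $s<\pi$, the divergence point $p=\gamma(s)$ is forced to be a vertex (an injective path cannot reverse inside an edge), and the two tails are distinct arcs from $p$ to $w$ of length $\pi-s$ each. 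The one step you rightly flag as needing care — extracting an embedded cycle from the closed walk of length $2(\pi-s)<2\pi$ — is cleanest if phrased as follows: the union of the two tails is a compact connected graph containing two distinct arcs with the same endpoints, hence is not a tree, hence contains an embedded cycle, and any embedded cycle inside that union has length at most the total length $2(\pi-s)<2\pi$, contradicting the girth hypothesis. (Equivalently, take the first parameter $t^*>s$ at which $\gamma$ re-meets the image of $\gamma'$; the two subarcs up to that point bound an embedded cycle.) With that phrasing the argument is complete.
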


\begin{definition}[CAT(0) rank of a metric graph]\label{Def - local rank} Let $L$ be a finite metric graph of girth at least $2\pi$ and assume that $\Phi_L$ is a nonempty finite set.  We call \emph{CAT(0) rank of $L$}, and denote  $\rk(L)$, the mean value
\[
\rk(L)={1\over |\Phi_L|}\sum_{\alpha\in \Phi_L} \rk(\alpha)
\] 
so that $\rk(L)\in [1,2]$, and  $L$ has rank 1 (resp. 2) if and only if  every root $\alpha\in \Phi_L$ has rank 1 (resp. 2). 
\end{definition}

We have the following fact.

\begin{proposition}
Let $L$ be a connected finite metric graph without leaf of girth at least $2\pi$. Then $L$ has rank 2 if and only if it is a spherical building. 
\end{proposition}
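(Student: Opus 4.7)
The plan is to prove each direction separately.

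For the easy direction $(\Leftarrow)$, suppose $L$ is a spherical building. Then $L$ is a generalized $m$-gon with edges of length $\pi/m$ for some $m\geq 2$. Given a root $\alpha\in\Phi_L$ with $u:=\alpha(0)$ and $v:=\alpha(\pi)$, the vertices $u$ and $v$ are opposite, at the maximal graph distance $m$. The standard projection property then yields, for each of the $q_\alpha+1$ edges at $u$, a unique length-$\pi$ geodesic from $u$ to $v$ beginning with that edge: bipartiteness combined with the diameter bound forces every neighbor $u'$ of $u$ to satisfy $d(u',v)=m-1$, and the geodesic from $u'$ to $v$ is unique by the girth condition. Counting gives $N(\alpha)=q_\alpha$ and $\rk(\alpha)=2$.

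For the hard direction $(\Rightarrow)$, assume $\rk(\alpha)=2$ for every $\alpha\in\Phi_L$. I would first establish the following lemma, immediate from the girth hypothesis: two distinct roots $\alpha,\beta\in\Phi_L$ with common endpoints $u$ and $v$ are internally disjoint and use distinct edges at both endpoints, together forming an embedded cycle of length exactly $2\pi$. Indeed any nontrivial common initial or terminal subsegment, terminated at a vertex $w\neq u,v$, would allow one to extract a closed locally-geodesic loop of length $2(\pi-d(u,w))<2\pi$, contradicting the girth bound. Combined with $N(\alpha)=q_\alpha$, this forces the $q_\alpha+1$ length-$\pi$ paths from $u$ to $v$ to occupy all edges at $u$, and by symmetry (applying the rank-$2$ condition to the reversed root $\alpha^{-1}$) all edges at $v$, so $v$ is also singular and the valencies at $u$ and $v$ agree.

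I would then show that $L$ is a generalized $m$-gon with standard metric. Girth equals $2\pi$ exactly, since at least one $2\pi$-cycle passes through each root. The main obstacle is the diameter bound: if some geodesic $\gamma$ from a singular vertex $u$ had length $>\pi$, I would use the rank-$2$ condition at the root $\alpha:=\gamma|_{[0,\pi]}$ together with a pigeonhole on the last edges at $v=\alpha(\pi)$ to produce an alternative length-$\pi$ path $\beta$ from $u$ to $v$ whose final edge differs from the continuation $\gamma|_{[\pi,\length(\gamma)]}$; the concatenation $\beta\cdot\gamma|_{[\pi,\length(\gamma)]}$ would then be a locally-geodesic path from $u$ to $\gamma(\length(\gamma))$ with a different first edge than $\gamma$, and comparing this path with $\gamma$ via the cycle-extraction argument of the lemma would yield a cycle of length $<2\pi$, contradicting the girth hypothesis. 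Once the diameter equals $\pi$, the standard combinatorial characterization of bipartite graphs of girth $2m$ and diameter $m$ identifies $L$ with a generalized $m$-gon, with all edges forced to have a common length $\pi/m$ by tracing the $2\pi$-cycles around singular vertices. The delicate point in the diameter argument is ensuring that the shortcut obtained is genuinely an embedded cycle rather than merely a locally-geodesic walk, which requires carefully tracking how the re-routed path interacts with $\gamma$ beyond~$v$.
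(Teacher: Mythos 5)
The paper states this proposition without proof, so there is nothing to compare against on that side; I am judging your argument on its own. Your $(\Leftarrow)$ direction is correct, and your key lemma is exactly the right structural fact: by the girth bound, the union of two distinct embedded length-$\pi$ paths with the same endpoints has total length at most $2\pi$ and contains a cycle of length at least $2\pi$, hence \emph{is} an embedded $2\pi$-cycle with the two paths as its arcs. From this and $N(\alpha)=q_\alpha$ you correctly get that the $q_\alpha+1$ paths from $u=\alpha(0)$ to $v=\alpha(\pi)$ are pairwise internally disjoint, exhaust the edges at $u$, force $v$ to be singular, and (via the reversed root) exhaust the edges at $v$ with $q_v=q_u$. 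As a bonus you should note this already shows every root is a geodesic: a geodesic from $u$ to $v$ shorter than $\pi$ would start along the same edge as one of the $q_u+1$ paths and produce a cycle of length $<2\pi$.

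The genuine gap is the diameter step. Your plan is to pick a companion $\beta$ of $\alpha$ whose final edge \emph{differs} from the edge $e$ along which $\gamma$ continues past $v$, concatenate, and extract a cycle of length $<2\pi$. This cannot work: $\beta$ and $\alpha$ are internally disjoint, so the two paths $\gamma$ and $\beta\cdot\gamma|_{[\pi,\length(\gamma)]}$ diverge at $u$ and reconverge no earlier than $v$, and the extracted cycle is $\alpha\cup\beta$, of length exactly $2\pi$ --- no contradiction with the girth; if instead $\beta$ meets $\gamma((\pi,\length(\gamma)])$ at a point with $\gamma$-parameter $s>\pi$ and $\beta$-parameter $t$, the cycle you get has length at most $t+s$, which is only bounded by $\pi+\length(\gamma)>2\pi$. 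The correct move is the opposite one, and it uses precisely what you have already proved: since the $q_v+1$ companion paths arrive at $v$ along \emph{all} $q_v+1$ edges of $v$, one of them, say $\beta_0$, arrives along the very edge $e$ on which $\gamma$ continues. Then for small $\e>0$ the point $y=\gamma(\pi+\e)$ lies on the terminal segment of $\beta_0$, so $d(u,y)\le \pi-\e<\pi+\e$, contradicting that $\gamma$ is a geodesic. In other words $v$ is a strict local maximum of $d(u,\cdot)$, so no geodesic from a singular vertex exceeds length $\pi$. Finally, your last sentence (``the standard combinatorial characterization \dots identifies $L$ with a generalized $m$-gon, with all edges forced to have a common length $\pi/m$'') is not yet an argument: $L$ is a metric graph with a priori unequal edge lengths and possible valency-$2$ points, so you must still show that every point of $L$ lies on a root (no hanging trees --- this follows because every configuration exhausts the edges at each of its singular vertices and a branch toward a leaf cannot carry a root), that the interior singular points occur at the same parameters on all arcs of a configuration (propagate singularity from $\alpha(t)$ to $\beta(\pi-t)$ through the opposite pole), and that these parameters form the arithmetic progression $\{k\pi/m\}$, before invoking the girth-$2m$/diameter-$m$ characterization of generalized polygons.
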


Let now $X$ denote a polyhedral complex of dimension 2 without boundary (i.e.\ every point of the 1-skeleton belongs to at least two 2-cells). Assume that $X$ is piecewise linear, i.e., that every 2-cell is endowed with a fixed (pairwise compatible) Euclidean metric with linear boundary. For each $x\in X$, the link $L_x$ of $X$ is the sphere of sufficiently small radius around $x$, endowed with the angular metric (the resulting graph is independent of the choice of the radius, provided it is small enough).  We recall that $X$ is a CAT(0) space if and only if for every $x\in X$, the link $L_x$ has girth at least $2\pi$ (see \cite[Chapter II.5]{BH}).  

A point $x$ of $X$ is said to be regular if its link is isometric to a circle of length $2\pi$. It is said to be singular otherwise. The set of singular points is included in the vertex set of $X$. We will be distinguishing between several types of singular vertices. A \emph{smooth singularity} in $X$ is a vertex whose link is a circle of length $>2\pi$. An \emph{open book singularity} is a singularity whose link is isometric to the direct product of a finite set with a two elements set, where all edges have length $\pi/4$.   Otherwise, we say that $X$ has an \emph{essential singularity} at this vertex.   
The complex $X$ itself is said to be \emph{singular} if it contains an essential singularity.  We also say that a vertex $x$ of $X$ is a \emph{removable singularity} of $X$ if it is a regular point of $X$  or an open book singularity. 

We call  \emph{link} of a singular complex $X$ the union $L_X$ taken over all non removable singularity $x$ of $X$ of the links $L_x$ at $x$. If $x$ is smooth, then the root system $\Phi_x$ at $x$ is defined to be the set of all path of length $\pi$ in $L_x$ issued from vertices. If $x$ is essential, then we define $\Phi_x$ to be the root system of $L_x$ is $x$.  The \emph{root system} $\Phi_X$ of $X$ is the union of the sets $\Phi_x$ taken over all non removable singularities $x\in X$. This is a metric  graph, usually disconnected, of girth at least $2\pi$. Elements in $\Phi_x$ are called the roots at $x$. Every root at a smooth singularity has rank 1.

Let $\G$ be a group of isometries of $X$. Then $\G$ acts by isometries on $L_X$ and  on the root system $\Phi_X$ of $X$. Furthermore the rank functional $\rk$ on $\Phi_X$ is invariant under this action.

\begin{definition}[Local rank of $(X,\G)$]\label{dfnlocrk} Let $X$ be a singular CAT(0) complex of dimension 2 and let $\G$ be a group of isometries of $X$ with $\Phi_X/\G$ finite.  The \emph{local rank} of $(X,\G)$ is defined to be the rank of the link of $(X,\G)$, namely:  
\[
\rk(X,\G)={1\over |\Phi_X/\G|} \sum_{\alpha\in \Phi_X/\G} \rk(\alpha).
\]
\end{definition}

\begin{remark}
The idea in rank interpolation as studied in this paper is to look for complexes (once some class of ``ambient" singular CAT(0) polyhedral complexes has been fixed, for example the triangle complexes, which are built out of equilateral triangles) whose local rank as close to 2 as one wishes, without being equal to 2 itself. This forces the proportion of vertices isomorphic to buildings in $X$ (relative to $\G$) to converge to 1;   examples (of a random nature) of such triangle complexes are given in \cite{random}. By a result of Tits, the assumption that $\rk(X,\G)=2$ (for $X$ a singular triangle complex) implies that $X$ is a thick building of type $\tilde A_2$ (in which case the group $\G$ satisfies the property T of Kazhdan).
\end{remark}

It is also possible to define $\ell^p$ versions of the local rank of a metric graph $L$, in the usual way, say $\rk_p(L)$ for $p\in [1,\infty]$, where in particular,
$\rk_\infty(L)=\sup_{\alpha\in \Phi_L} \rk(\alpha)$ and $\rk_\infty(X)=\sup_{x\in X^{(0)}} \rk_\infty(L_x)$ for any metric polyhedral complex $X$. We also have the following related notion   (compare Lemma \ref{lem:isfl}), which is a local version of the isolated flat property.

\begin{definition}\label{rem:isfl}
We say that $L$ has  rank $\rk_\infty(L)\leq 1^+$ if for every $\alpha\in \Phi_L$ we have $N(\alpha)\leq 1$.
\end{definition}

We note that if $L$ has constant $q_\alpha=q$, then $\rk_\infty(L)\leq 1^+$ if and only if $\rk_\infty(L)\leq 1+{1\over q}$ in the sense above. We extend  $\rk_\infty(X)=\sup_{x\in X^{(0)}} \rk_\infty(L_x)$ to the extra symbol $1^+$ and say that $X$ has local rank $1^+$ if $\rk_\infty(X)\leq 1^+$.

Let us illustrate the above notions by  computing precise values of the rank in instances of interest to the present paper.

\begin{proposition}
Let $G$ be a spherical  building of type $A_2$ and  of order $q$ with one missing chamber. Then 
\[
\rk(G)= 2- {2\over (q+1)(q^2+q+1)-3}
\]
for $q\geq 3$, while 
\[
\rk(G)= 2-{1\over 8}= 1.875
\]
if $q=2$.
\end{proposition}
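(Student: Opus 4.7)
The plan is to realize $G$ as $H\setminus e$, where $H$ is the incidence graph of a projective plane of order $q$ and $e=\{p_0,L_0\}$ is an edge. I will use that $H$ is bipartite of valency $q+1$ and girth $6$, and that any two opposite vertices (those at distance $3$) are joined by exactly $q+1$ geodesic paths of length $\pi$. In particular $N_H(\alpha)=q$ for every $\alpha\in\Phi_H$, recovering $\rk(H)=2$.

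The first step is to show that, for any pair of opposite vertices $u,w$ of $H$, at most one of the $q+1$ geodesics from $u$ to $w$ passes through $e$. This is a short case analysis according to the position of $e$ among the three edges of the geodesic: each position forces a specific incidence pattern on the pair $(u,w)$ (for instance $u\in\{p_0,L_0\}$ when $e$ is the first edge, $u\in L_0$ and $w\ni p_0$ when $e$ is the middle edge), and these three patterns are pairwise incompatible. Writing $U(\alpha)\in\{0,1\}$ for the number of $H$-geodesics between $\alpha(0)$ and $\alpha(\pi)$ that use $e$, we obtain $N_G(\alpha)=q-U(\alpha)$ for every $\alpha\in\Phi_G$.

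The second step is a direct enumeration of the geodesics of $H$ through $e$: by position of $e$ one finds $2q^2$ paths in each of the three positions, hence $6q^2$ in total; of these, $2q^2$ start at an endpoint of $e$ (namely, when $e$ is the first edge) and $4q^2$ start elsewhere. Dualising, $\Phi_G$ itself partitions (for $q\geq 3$) into: (i) $2q^3$ \emph{special} paths starting at $p_0$ or $L_0$, all with $U=1$ and $q_\alpha=q-1$ (since $\val_G(p_0)=\val_G(L_0)=q$); (ii) $4q^3$ \emph{regular} paths with $U=1$ and $q_\alpha=q$ (each geodesic of $H$ through $e$ starting regularly gives $q$ companions in $G$ sharing its endpoints); (iii) the remaining regular paths, which have $U=0$ and $q_\alpha=q$. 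Plugging the three cases into $\rk(\alpha)=1+N_G(\alpha)/q_\alpha$, a short calculation collapses the sum to
\[
\sum_{\alpha\in\Phi_G}\frac{N_G(\alpha)}{q_\alpha}\;=\;|\Phi_G|-4q^2,
\]
from which $\rk(G)=2-4q^2/|\Phi_G|$.

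For $q\geq 3$ every vertex of $G$ is singular, so $|\Phi_G|$ equals the total number of paths of length $\pi$ in $H$ (namely $2(q^2{+}q{+}1)(q{+}1)q^2$) minus $6q^2$, i.e.\ $|\Phi_G|=2q^2\bigl((q+1)(q^2+q+1)-3\bigr)$, giving the announced formula. For $q=2$ the endpoints $p_0,L_0$ of $e$ drop to valency $2$ and leave the singular set, so the $2q^3=16$ special paths must be excluded from $\Phi_G$. The collapse $\sum N_G/q_\alpha=|\Phi_G|-4q^2$ still holds (no special contribution), and $|\Phi_G|=144-16=128$ yields $\rk(G)=2-16/128=15/8$. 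The main obstacle is the structural case analysis establishing $U(\alpha)\leq 1$ and the correct partitioning of $\Phi_G$ according to the three cases; the arithmetic collapse is routine once that bookkeeping is in place, and the separate formula at $q=2$ is dictated entirely by the change in the singular-vertex set at the endpoints of $e$.
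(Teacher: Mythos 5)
Your proof is correct, and it is organized rather differently from the paper's. The paper classifies roots directly by their rank value and then enumerates the rank-$2$ roots through an explicit positional case analysis (six or so subcases according to whether the missing chamber sits at distance $0$, $1$ or $2$ from $\alpha(0)$, $\alpha(\pi/3)$, $\alpha(2\pi/3)$, or is opposite to a subsegment of $\alpha$), arriving at the count $2q^2(q^3+2q^2-2)$ of rank-$2$ roots and $4q^3$ roots of rank $2-\tfrac1q$. You instead isolate the lemma that at most one of the $q+1$ geodesics of $H$ joining a given opposite pair passes through the deleted edge $e$, so that $N_G(\alpha)=q-U(\alpha)$ with $U(\alpha)\in\{0,1\}$; you then only need the much easier count of the $6q^2$ geodesics of $H$ through $e$ (split as $2q^2$ special plus $4q^2$ regular) together with the total $|\Phi_G|$, and you recover everything by complementary counting, the sum collapsing to $|\Phi_G|-4q^2$. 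The two computations agree: your classes (i), (ii), (iii) have sizes $2q^3$, $4q^3$ and $|\Phi_G|-6q^3$, matching the paper's tallies, and both treatments handle $q=2$ by dropping the $2q^3$ roots based at the now non-singular endpoints of $e$. What your route buys is the elimination of the tedious subcase enumeration, replaced by one clean incidence lemma and a subtraction; what the paper's route buys is an explicit inventory of the rank-$2$ roots by geometric position, in the same style as the root tables it later compiles for $G_1,\dots,G_6$. I verified the arithmetic at the end of your argument ($2-4q^2/|\Phi_G|$ with $|\Phi_G|=2q^2\bigl((q+1)(q^2+q+1)-3\bigr)$ for $q\ge 3$, and $|\Phi_G|=128$ for $q=2$) and it reproduces the stated formulas.
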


\begin{proof}
Let $\alpha$ be a root. Then either $\rk(\alpha)=2$ or $\rk(\alpha)=2-{1\over q}$. The latter case corresponds to $q_\alpha=q$ and $N(\alpha)=q-1$ and can be divided into two subcases: the missing chamber can be at distance 1 or 2 from $\alpha(0)$. Both contribute to $2q^3$ roots, hence $4q^3$ roots of rank $2-{1\over q}$.

Let us compute the number of roots of rank 2. For such a root $\alpha$, we have either $q_\alpha=q$ or $q_\alpha=q-1$. The latter case contributes to $2q^3$ roots if $q>2$ and to none if $q=2$. Let us now assume that $q_\alpha=q$. There are again two subcases: the missing chamber can be at distance 1 or 2 from $\alpha(0)$.  

The first subcase ramifies into 2 subcases, depending on whether the closest point of the root to the missing chamber is $\alpha(0)$ or  $\alpha(\pi/3)$. In the case of $\alpha(0)$, this contributes to $2q^3(q-1)$ roots, for $\alpha(\pi)$ does not belong to the 6-cycle containing the missing chamber and  $\alpha([0,2\pi/3]$).  In the case of $\alpha(\pi/3)$, this contributes to $2q^2(q-1)$ as is easily seen.

The second subcase ramifies into 4 subcases, depending on whether the closest point of the root to the missing chamber is $\alpha(\pi/3)$ or $\alpha(2\pi/3)$ (at distance 1 and 0 respectively), or whether the missing chamber is opposite to $\alpha([0,\pi/3])$  or to $\alpha([2\pi/3,\pi/2])$ (we recall that opposite means opposite in some apartment).  Analyzing the possibilities gives respectively  $2q^3(q-1)$, $2q^2(q-1)$, $2q^3(q-1)$, and $2q^4(q-1)$ roots. 

Thus  we find 
$4q^3$ roots of rank $2-{1\over q}$ and 
\[
2q^2(q+2(q-1)+3q(q-1)+q^2(q-1))=2q^2(q^3+2q^2-2)
\] 
roots of rank 2 if $q>2$. The rank of $G$ is
\[
\rk(G)={2q(2-{1\over q})+2(q^3+2q^2-2)\over  q^3+2q^2+2q-2}=2-{2\over q^3+2q^2+2q-2}.
\]

If $q=2$, then we have 32 roots of rank $3\over 2$ and 96 roots of rank 2 and the rank is $15/8$. 

This proves the proposition.  
\end{proof}

In particular, the rank converges to 2 as $q\to \infty$. 

\medskip

It is also instructive to examine the rank of the six spherical buildings of Proposition \ref{fact6prime}:

\begin{proposition}\label{rank3}
The rank of spherical $A_2$ buildings  of order 2 with three chambers missing is given in the following table:
\begin{table}[ht]
\centering
\begin{tabular}{c@{\hspace{.7cm}} c@{\hspace{.6cm}} c@{\hspace{.6cm}} c@{\hspace{.6cm}}c@{\hspace{.6cm}}c@{\hspace{.6cm}}c@{\hspace{.6cm}}}
& $G_1$&$G_2$&$G_3$&$G_4$&$G_5$&$G_6$\\
\\
$\rk(G_i)=$ & $18\over 11$&$13\over 8$&$105\over 64$&$49\over 31$&$3\over 2$&$3\over 2$\\
\\
$\rk(G_i)\approx$ & $1.636$&$1.625$&$1.640$&$1.58$&$1.5$&$1.5$\\
\end{tabular}
\end{table}
\end{proposition}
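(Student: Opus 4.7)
The proof is a direct combinatorial computation, carried out for each of the six graphs $G_i$ in turn. Since $G_i$ is obtained from the Fano incidence graph $H$ (in which every vertex has valency $3$) by deleting three edges, a vertex of $G_i$ is singular in the sense of Section~5 precisely when at most one of its three incident edges is removed; in that case its order is $q_v = 2$. In particular every root $\alpha \in \Phi_{G_i}$ satisfies $q_\alpha = 2$, so $\rk(\alpha) \in \{1, 3/2, 2\}$ according as $N(\alpha) \in \{0, 1, 2\}$.

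The plan is to enumerate, for each $G_i$, the length-$\pi$ paths (sequences of three consecutive edges of length $\pi/3$) and group them into pencils: a \emph{pencil} at an unordered pair $\{v, w\}$ is the set of length-$\pi$ paths in $G_i$ between $v$ and $w$. Since $H$ has girth $6$, in $H$ itself every pair of vertices at graph distance $3$ supports a pencil of exactly three paths, one per apartment through $v$ and $w$; after deletion, the size of this pencil in $G_i$ drops by the number of such apartments that pass through a removed edge. A surviving pencil of size $k$ whose endpoints comprise $s \in \{0, 1, 2\}$ singular vertices contributes $sk$ to $|\Phi_{G_i}|$ and $sk(k-1)$ to $\sum_{\alpha} N(\alpha)$. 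Assembling via
\[
\rk(G_i) \;=\; 1 + \frac{1}{2\,|\Phi_{G_i}|} \sum_{\alpha \in \Phi_{G_i}} N(\alpha)
\]
then yields the six claimed rational values.

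The main obstacle is bookkeeping: for each of the configurations of removed edges identified in Proposition~\ref{fact6prime} (cases $(1,1,1)$, $(1,1,2)$ with two subcases, $(1,2,2)$, and $(2,2,2)$ with two subcases), one must list the surviving apartments through each relevant vertex pair and sum the resulting pencil contributions, taking care not to omit or double-count pencils whose apex has only one singular endpoint. The length spectra recorded in the proof of Proposition~\ref{fact6prime} provide a useful cross-check on $|\Phi_{G_i}|$; in particular the coincidence $\rk(G_5) = \rk(G_6) = 3/2$ reflects the equality of their length spectra, since in both graphs the mean surviving pencil size turns out to equal exactly $2$.
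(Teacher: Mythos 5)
Your general framework is the same as the paper's (the paper simply records the outcome of a ``direct but tedious computation'' as a classification of roots by rank), and the pencil bookkeeping together with the identity $\rk(G_i)=1+\frac{1}{2|\Phi_{G_i}|}\sum_{\alpha}N(\alpha)$ is a sound way to organize that computation. However, there is a genuine error at the first step, in your characterization of the singular vertices. Since the three deleted edges are pairwise disjoint (as established in the proof of Proposition \ref{fact6prime}), \emph{every} vertex of $G_i$ has at most one incident edge removed, so your criterion ``at most one of its three incident edges is removed'' is vacuously satisfied and would declare all $14$ vertices singular. The correct criterion is that \emph{none} of the three incident edges is removed: a vertex that loses an edge has valency $2$, hence is not singular in the sense of Section \ref{CATrank} (which requires valency at least $3$), and its order would in any case be $q_v=1$, not $2$. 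This is not cosmetic: length-$\pi$ paths issued from the six valency-$2$ vertices are not roots, and counting them changes all six values. For $G_1$, for instance, inclusion--exclusion gives $84-(36-3)=51$ surviving unordered length-$\pi$ paths, hence $102$ rooted paths, of which only $66$ start at a valency-$3$ vertex; the correct denominator is $66$, not $102$. Once ``singular'' is corrected to ``valency $3$ in $G_i$'', your assertion that every root has $q_\alpha=2$ and your rule that a pencil of size $k$ with $s$ singular endpoints contributes $sk$ roots each with $N(\alpha)=k-1$ are both correct, and the computation reproduces the paper's table of root counts $(6,36,24)$, $(8,32,24)$, $(6,34,24)$, $(8,36,18)$, $(0,60,0)$, $(6,48,6)$ for ranks $1$, $3/2$, $2$.

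Two secondary points. First, the size of the pencil at $\{v,w\}$ drops by the number of the three $H$-paths from $v$ to $w$ that \emph{contain} a removed edge, not by the number of apartments through $v$ and $w$ meeting a removed edge: each of the three paths lies in two of the three apartments through $v$ and $w$, so the apartment count would over-subtract. Second, the equality of the length spectra of $G_5$ and $G_6$ does not by itself force $\rk(G_5)=\rk(G_6)$; the root distributions $(0,60,0)$ and $(6,48,6)$ are genuinely different and merely happen to have the same mean, so this coincidence should not be used as a cross-check.
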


\medskip

This can be proved by a direct (but tedious) computation. In fact we have the following classification of roots:\\
\begin{table}[ht]
\centering
\begin{tabular}{c@{\hspace{.7cm}} | c@{\hspace{.6cm}} c@{\hspace{.6cm}} c@{\hspace{.6cm}}c@{\hspace{.6cm}}c@{\hspace{.6cm}}c@{\hspace{.6cm}}}
Roots $\backslash$ Buildings & $G_1$&$G_2$&$G_3$&$G_4$&$G_5$&$G_6$\\
\hline
rank 1 & $6$&$8$&$6$&$8$&$0$&$6$\\
\hline
rank 3/2 & $36$&$32$&$34$&$36$&$60$&$48$\\
\hline
rank 2 & $24$&$24$&$24$&$18$&$0$&$6$\\
\hline
$|\Aut|$ & $6$&$2$&$2$&$2$&$12$&$6$\\
\hline
\end{tabular}
\end{table}

We note that the inverse pyramid $G_5$ has the distinctive feature that all of its roots are of rank $3\over 2$. The table also shows the reason why we chose $G_6$ when looking for transitive Euclidean buildings with missing chambers that are hyperbolic (compare Fact \ref{lem:isfl}). A better choice \emph{a priori} could have been $G_5$---this is studied in full details in a later section. The computation of the automorphism groups is an easy exercise (the case of $G_5$ is solved in the introduction). As a corollary of this analysis we observe that:

\begin{lemma}\label{L:emma - aut-gi-fixes tripod}
Every automorphism of $G_1,\ldots, G_6$ which fixes a vertex and its three adjacent edges is trivial.
\end{lemma}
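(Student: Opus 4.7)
The plan is to reduce the lemma to a combinatorial verification inside the Fano incidence graph $H$. First, I observe that the hypothesis is equivalent to fixing pointwise the set $B_1(v)=\{v,v_1,v_2,v_3\}$: since each of the three edges at $v$ is sent to itself and $v$ is itself fixed, the opposite endpoints are fixed, so only the pointwise stabiliser of $B_1(v)$ in $\Aut(G_i)$ has to be shown trivial.

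The key step will be to extend each $\phi\in\Aut(G_i)$ to an automorphism $\bar\phi\in\Aut(H)$. For $G_6$ this is exactly Lemma~\ref{l54}, and the same argument adapts to the other five graphs: the three missing edges of an extension $G_i\hookrightarrow H$ can be characterised intrinsically inside $G_i$ through the length spectrum listed in the proof of Proposition~\ref{fact6prime} together with the local combinatorics around the degree-$2$ vertices. Hence $\phi$ must permute the missing edges, and combining this with its action on the $18$ edges of $G_i$ defines the desired lift $\bar\phi$.

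Inside $\Aut(H)=\PGL(2,7)$, the stabiliser of $v$ is the affine group $S_4$ acting on the three lines through $v$ as $S_3$, with kernel the Klein four-group $K$; the three non-trivial involutions of $K$ fix $v,v_1,v_2,v_3$ and swap exactly two of the three pairs of vertices at distance $2$ from $v$ in $H$ (the pairs being the two other points on each line $v_j$). Thus $\bar\phi$ belongs to $K$, and it will remain to show that no non-trivial involution of $K$ preserves the three-element set of missing edges.

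The verification is finite and will be carried out by inspection in each of the six configurations described in Proposition~\ref{fact6prime}. This case analysis is the main technical obstacle; the most delicate case is the inverse pyramid $G_5$, whose automorphism group has order $12$ (the largest among the six). In that case I will use the explicit generators given in the introduction---the dihedral $S_3$-action of order $6$ and the extra involution fixing the top vertex and one of its neighbours---to check that none of the candidate nontrivial elements of $K$ stabilises a pointed flag, thereby completing the proof.
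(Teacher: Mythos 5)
Your strategy is genuinely different from the paper's. The paper obtains this lemma as an immediate corollary of the explicit computation of the six automorphism groups $\Aut(G_i)$ (of orders $6,2,2,2,12,6$), i.e.\ it never leaves the graph $G_i$; you instead lift the automorphism to $\Aut(H)=\PGL(2,7)$ and exploit the structure of the vertex stabiliser $S_4$ with Klein kernel $K$. The group-theoretic core of your plan is correct: an automorphism of $H$ fixing a vertex $v$ and its three neighbours lies in $K_v$, is an elation of the Fano plane with centre $v$, and swaps the two far vertices on exactly two of the three lines through $v$; granting the lift, the lemma is equivalent to $K_v\cap\stab_{\Aut(H)}(M)=1$ for every valency-$3$ vertex $v$, where $M$ is the set of three missing edges.

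Two steps, however, are not actually carried out, and one of them is shakier than you suggest. First, the existence of the lift $\phi\mapsto\bar\phi$ requires that every $\phi\in\Aut(G_i)$ preserve $M$, equivalently that the completion of $G_i$ to a $3$-regular bipartite graph of girth $\geq 6$ be unique; the paper proves this only for $G_6$ (Lemma \ref{l54}), and your proposed intrinsic characterisation of the missing edges via the length spectrum and distances is not uniform over the six graphs: for $G_2$ there is a pair of valency-$2$ vertices separated by at least $5$ edges which is \emph{not} a missing edge (this is precisely the extra ``diagonal'' edge of the extension invariant noted in the proof of Theorem \ref{cor:unicity}), so individual missing edges cannot be recognised by distance data alone and one must instead argue that the admissible perfect matching on the six valency-$2$ vertices is unique. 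Second, the decisive finite verification --- that for each $i$ and each valency-$3$ vertex $v$ no non-trivial element of $K_v$ preserves $M$ --- is announced rather than performed, and your description of the $G_5$ check inverts the logic: every element of $K_v$ stabilises the pointed flag by construction, and what must be excluded is that it stabilises $M$ (for $G_5$ one can instead just note that the order-$2$ generator fixes only one of the three edges at the top vertex, and likewise for the other elements of $\Aut(G_5)$). As written your proposal is a sound and more structural plan, but not yet a proof; the paper's shortcut of computing each $\Aut(G_i)$ outright avoids the lifting issue entirely at the cost of six elementary inspections.
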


\section{Local-to-global type results for buildings with chambers missing}\label{locglob}

For spaces of intermediate rank there is no general local-to-global result allowing to control the mesoscopic or the asymptotic rank in terms of the local rank.  For hyperbolic spaces or buildings, we have the following well-known results (see for example \cite{BH} and \cite{Ronan}).

\begin{theorem}[Local-to-global]\label{thm:titslocal} Let $X$ be a CAT(0) simplicial complex of dimension 2.
\begin{itemize}
\item If $X$ is locally hyperbolic (which is equivalent to $X$ having local rank 1), then $X$ is hyperbolic. 
\item If all  links of $X$ are spherical buildings (which is equivalent to $X$ having local rank 2), then $X$ is a building (the converse also holds).   
\end{itemize}
\end{theorem}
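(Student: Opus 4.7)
Both parts reduce to translating the local rank hypothesis into a geometric property of vertex links and then invoking a classical local-to-global principle for CAT(0) $2$-complexes.

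For the first assertion, I would start by observing that local rank $1$ of $X$ is equivalent to every vertex link $L_x$ having girth strictly greater than $2\pi$. Indeed, $\rk(L_x)=1$ means $N(\alpha)=0$ for every root $\alpha\in\Phi_{L_x}$, i.e.\ no two distinct paths of length $\pi$ starting at a singular vertex share the same endpoint; a cycle of length exactly $2\pi$ passing through a singular vertex $v$ decomposes into two such $\pi$-paths based at $v$, and conversely, so the absence of such cycles (together with the CAT(0) lower bound girth $\geq 2\pi$) is precisely local rank $1$. I would then apply the no-flat-plane criterion of \cite{BH}: a CAT(0) piecewise Euclidean $2$-complex with finitely many isometry types of cells is hyperbolic if and only if it admits no isometric embedding of the Euclidean plane. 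Any such embedded flat necessarily passes through some vertex $v$ of $X$, since a sufficiently large Euclidean disk in a piecewise linear $2$-complex must meet the $0$-skeleton; the intersection of a small metric sphere around $v$ with the flat is then an isometric circle of length $2\pi$ in $L_v$, contradicting girth $>2\pi$.

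For the second assertion, the hypothesis that every link $L_x$ is a spherical building is equivalent to local rank $2$ by the proposition stated just after Definition \ref{Def - local rank} (applied at each vertex orbit). This places us in the hypothesis of Tits' local characterization of Euclidean Tits buildings (see \cite{Ronan}): a simply connected CAT(0) piecewise Euclidean $2$-complex whose vertex links are spherical buildings, all of a fixed Coxeter type, is itself a Euclidean building. I would follow Tits' strategy of amalgamating local apartments into global ones. Starting from a germ of an apartment at a vertex (i.e.\ an apartment in its link), the building axioms in the links, and in particular the exchange condition, allow one to extend the germ coherently across adjacent chambers. Simple connectivity of $X$ (automatic since $X$ is CAT(0)) ensures that these extensions close up into a flat Coxeter subcomplex $\Sigma\subset X$, and the building axioms for $X$ follow chamber by chamber from the local building structure.

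The genuine difficulty is concentrated in the second part: showing that local apartment germs glue coherently into a global flat subcomplex. This is the substance of Tits' classical local-to-global theorem, whose proof combines the exchange condition in each link with an incidence-geometric argument to recover Coxeter complexes inside $X$. The first part, by contrast, is essentially automatic once the equivalence between local rank $1$ and girth $>2\pi$ is in place, modulo the standard no-flat-plane criterion.
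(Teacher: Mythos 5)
The paper offers no proof of this statement: it is recorded as a ``well-known result'' with pointers to \cite{BH} and \cite{Ronan}, which is exactly where your two bullet points land. Your sketch correctly identifies the two standard ingredients --- the flat plane criterion for the first assertion and Tits' local approach to buildings for the second --- so it is consistent with (and more detailed than) what the paper does.

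Two small caveats. First, the flat plane criterion you invoke requires finitely many isometry types of cells (or cocompactness), and the step ``local rank $1$ implies girth $>2\pi$ at every vertex'' silently ignores non-singular vertices: a link that is a circle of length exactly $2\pi$ has empty root system, contributes nothing to the local rank, and yet admits a flat point, so the equivalence in the first bullet really presupposes the thickness convention of Definition \ref{dfnlocrk} (every vertex singular). Second, your paragraph on the second assertion is a description of Tits' theorem rather than a proof of it; since the paper also defers entirely to \cite{Ronan} there, that is acceptable, but one should be aware that the actual argument goes through coverings of chamber systems rather than a direct amalgamation of apartment germs, and that for $2$-dimensional complexes the relevant hypothesis is precisely that all vertex links are generalized polygons of the type prescribed by the face shapes, with simple connectivity supplied by the CAT(0) assumption.
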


The case of isolated flats is similar to that of hyperbolic spaces. 
Recall that a CAT(0) complex of dimension 2 has  \emph{isolated flats} if for every compact set $K$ of $X$, the number of flats of $X$ intersecting $K$ is finite (see \cite{HK}). We record the following fact for further reference (our notation $\rk_\infty(X)\leq 1^+$ is explained in Definition \ref{rem:isfl}).

\begin{lemma}\label{lem:isfl}
Let  $X$ be a CAT(0) simplicial complex of dimension 2 with local rank 
 $\rk_\infty(X)\leq 1^+$. Then $X$ has isolated flats.
\end{lemma}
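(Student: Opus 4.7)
The plan is to establish the isolated flats property via the following two-step reduction: (i) every flat in $X$ is a union of closed $2$-cells of $X$, and (ii) each $2$-cell of $X$ belongs to at most one flat. Granting these, any flat meeting a compact set $K$ must contain one of the finitely many $2$-cells meeting $K$, and by (ii) distinct flats contribute distinct $2$-cells, so only finitely many flats meet $K$. The local rank hypothesis $\rk_\infty(X)\le 1^+$ enters only in (ii).

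For (i), I would argue as follows. If a flat $F$ meets the interior of a $2$-cell $T$, then at any interior point $p\in F\cap T$ a neighbourhood of $p$ in $X$ is Euclidean, and since $F$ is a totally geodesic $\mathbb{R}^2$ it contains a $2$D neighbourhood of $p$. The set $F\cap T$ is convex in $T$ (as the intersection of two convex sets in $X$), and any boundary point of $F\cap T$ in the interior of $T$ would be a boundary point of the $2$-manifold-without-boundary $F$, which is impossible. Hence $F\cap T=T$, so $T\subseteq F$. In particular every flat meeting $K$ contains a $2$-cell meeting $K$, and there are only finitely many such $2$-cells.

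For (ii), the key point, suppose $F_1\ne F_2$ are distinct flats both containing a $2$-cell $T$. Fix a vertex $v$ of $T$ and let $e_T\subset L_v$ be the edge corresponding to $T$, with endpoints $u_1,u_2\in L_v$ corresponding to the two edges of $T$ at $v$. Each flat $F_i$ determines a cycle $C_i\subset L_v$ of length $2\pi$ passing through $u_1$ and $u_2$ via $e_T$; since the flats are distinct, I claim $C_1\ne C_2$, and moreover each $u_i$ must be a singular vertex of $L_v$ (otherwise, the $2$-cell opposite to $T$ at $u_i$ is uniquely determined and both flats must contain it, so we can propagate equality along $T$). At $u_1$ each cycle $C_i$ decomposes into two halves from $u_1$ to its antipode $\bar u_1^{(i)}$ on $C_i$, and each such half is an element of $\Phi_{L_v}$, i.e.\ a root.

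In the case $\bar u_1^{(1)}=\bar u_1^{(2)}=\bar u_1$, the four half-cycles give roots from $u_1$ to $\bar u_1$; a short case analysis (based on which of these halves coincide with which, taking into account that if both halves of $C_1$ coincide with the corresponding halves of $C_2$ then $C_1=C_2$) shows that at least three of them are distinct. This forces $N(\alpha)\ge 2$ for some root $\alpha$, contradicting $\rk_\infty(X)\le 1^+$. The main obstacle is the complementary case $\bar u_1^{(1)}\ne \bar u_1^{(2)}$; here I plan to repeat the analysis at the vertex $u_2$ (or at another vertex of $T$) and iterate along the shared $2$-cells of $F_1$ and $F_2$. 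Because $T$ is a compact $2$-cell and each step further constrains which half-cycles can differ, a coincidence of antipodes must eventually occur at some vertex, producing the contradiction. This iteration step is the delicate part of the argument and is where the combinatorics of the link and the hypothesis $\rk_\infty(X)\le 1^+$ must be played against each other carefully; it would likely require either a parity/continuity consideration along the sequence of shared $2$-cells, or a direct analysis of how the cycles $C_i$ are forced to cohere as one moves between adjacent vertices of a $2$-cell.

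Granting (i) and (ii), finiteness of flats meeting $K$ is immediate, proving the lemma.
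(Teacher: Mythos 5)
There is a genuine gap in step (ii), and it sits exactly where you flag the argument as delicate. Your plan is to extract a root $\alpha$ with $N(\alpha)\geq 2$ from two distinct flats $F_1\neq F_2$ sharing a $2$-cell $T$, by comparing the two length-$2\pi$ circles $C_1,C_2\subset L_v$ at a vertex $v$ of $T$ and hoping that ``a coincidence of antipodes must eventually occur'' as you move around $T$. This cannot work as stated. When $F_1\cap F_2$ is, say, the single triangle $T$, then at every vertex $v$ of $T$ the circles $C_1$ and $C_2$ meet exactly in the arc $e_T$ (of length $\pi/3$), and the configuration in which the antipodes of $u_1$ on $C_1$ and on $C_2$ are distinct is perfectly consistent with $N(\alpha)\leq 1$ for every root of $L_v$: the four half-circles pair off into two pairs with different terminal points, and the same can happen at $u_2$ and at the other two vertices of $T$. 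There are only finitely many vertices to visit and no local mechanism forces the antipodes to coincide at any of them, so the iteration has nothing to grab onto. Your local comparison only rules out intersections $F_1\cap F_2$ whose boundary contains a vertex of interior angle exactly $\pi$ (e.g.\ a shared strip or half-plane); it says nothing about small convex intersections all of whose sides are single edges. Note also that (ii) is strictly stronger than what the conclusion requires --- isolated flats is compatible with a $2$-cell lying on several flats --- and it is not at all clear that (ii) is true under the hypothesis $\rk_\infty(X)\leq 1^+$.

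The paper's proof avoids all of this by invoking the Hruska--Kleiner/Wise characterization \cite{HK}: a CAT(0) $2$-complex (with the paper's standing cocompactness assumptions) that fails to have isolated flats contains an isometrically embedded flat triplane, i.e.\ three half-planes glued along a common line $\ell$. The decisive difference from your configuration is that the three half-planes share the \emph{entire line} $\ell$, which must lie in the $1$-skeleton and hence passes through vertices of $X$; at such a vertex the link contains three internally disjoint arcs of length $\pi$ joining the \emph{same} two points $u,\bar u$ (the two directions of $\ell$), so $u$ is singular and some root has $N(\alpha)\geq 2$ immediately, with no case analysis on antipodes. If you want to keep a self-contained route, you would need to replace (ii) by a genuinely global argument (or reprove the triplane reduction); the purely local comparison of the two circles at the vertices of a shared $2$-cell does not suffice.
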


\begin{proof}
If a space does not have isolated flats, then by Wise's criterion it contains a flat triplane (see \cite{HK}). A fortiori, some link $L$ at a vertex on the singular set of this triplane contains an $\alpha\in \Phi_L$ with $N_L(\alpha)> 1$, contradicting the assumption.
\end{proof}

In this section we study the passage from local to global for buildings with chambers missing. We give first two representative examples, and then derive a criterion (Theorem \ref{th:loccrit}) for a space to be a building with missing chamber of type $\tilde A_2$.

The first example shows that mesoscopic rank phenomena  can occur when a single chamber is missing and the acting group is transitive on vertices. 

\begin{proposition}\label{prop83} There exists a Euclidean building $(X_2,\G_2)$ with exactly one chamber missing  which is of (exponential) mesoscopic rank. The group $\G_2$ acts transitively on the vertices of $X_2$, contains $\ZI^2$, and the link of $X_2$ is isometric to $G_2$.
\end{proposition}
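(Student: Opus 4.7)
The plan is to construct an explicit finite polyhedral complex $V_2$ with a single vertex, assembled from six equilateral triangles glued along their boundary edges with carefully chosen labels and orientations, so that the link at the unique vertex of $V_2$ is isometric to $G_2$. The design is guided by the length spectrum $\{(\pi/3,8),(2\pi/3,2),(\pi,2)\}$ of $G_2$: the total angle at the vertex must equal $6\pi$, matching exactly six equilateral faces, while the two length-$\pi$ paths in the link force two pairs of consecutive triangles to align into short flat strips across the vertex. I would then set $X_2:=\tilde V_2$ and $\G_2:=\pi_1(V_2)$, so that $\G_2$ acts freely and transitively on the vertices of $X_2$. Since $G_2$ embeds in the Heawood graph $H$ its girth is at least $2\pi$, hence $X_2$ is CAT(0) by the Cartan--Hadamard theorem.

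Next I would verify that the link of $V_2$ is $G_2$ by a direct face-chasing argument in the spirit of Lemma~\ref{l51}: trace all link-edges and all paths of length up to $\pi$ through the reunion of vertex links, then match the resulting graph to $G_2$ using its length spectrum and its characteristic local structure. Given this, Theorem~\ref{th:loccrit-intr} (the $\tilde A_2$ local-to-global criterion), combined with the uniqueness of the extension $G_2\leadsto H$ inherited from the earlier classification, produces a unique extension $(X_2,\G_2)\leadsto(X_2',\G_2')$ into a Euclidean $\tilde A_2$-building. Since a complete $\tilde A_2$-building of order $2$ contains seven chambers per vertex of a vertex-transitive action while $V_2$ contains only six, exactly one chamber is missing as required.

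To produce a copy of $\ZI^2$ in $\G_2$ I would invoke one of the two length-$\pi$ paths in $G_2$: it determines at every vertex of $X_2$ a ``straight'' line direction through the vertex, and because the $\G_2$-action is cocompact, following this direction across $X_2$ yields an isometrically embedded periodic flat $F\subset X_2$ whose stabilizer in $\G_2$ acts cocompactly on $F\cong\RI^2$ and therefore contains a copy of $\ZI^2$.

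The main obstacle is the exponential mesoscopic rank, i.e.\ producing exponentially many (in $r$) concentric flat balls of radius $r$ around a fixed center that do not extend to a full flat. The approach is to model the process of growing a flat ball one layer at a time as a subshift on the combinatorial labels of admissible chambers along successive rings. At each extension step one may only continue along a length-$\pi$ path in the link, and since $G_2$ carries only two such paths, the choices are strongly constrained; however, the single missing chamber in $(X_2,\G_2)$ introduces a genuine branching, in which certain admissible local extensions lead to vertices still carrying a legal continuation while others run into the missing chamber after a bounded delay and must be discarded. A case analysis of the resulting finite-state automaton should show that this subshift has positive topological entropy and hence exponentially many legal finite words, each producing a flat ball of the required radius that fails to prolong to a full plane. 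The delicate point is to verify that the branching factor remains strictly greater than $1$ uniformly over all states, rather than eventually collapsing to a single thread which would produce only linear growth.
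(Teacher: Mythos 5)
Your overall architecture matches the paper's: the paper realizes $V_2$ as a one-vertex complex built from two equilateral triangles and a $2\times 1$ parallelogram (six triangles after triangulation, matching your $6\pi$ angle count), takes $X_2=\tilde V_2$ and $\G_2=\pi_1(V_2)=\langle u,v\mid u^{-2}vu^2v=v^2uv^{-2}\rangle$, verifies the link by face chasing, and appeals to the local criterion of Section \ref{locglob} for the (unique) extension; the mesoscopic rank proof is deferred to the methods of \cite{rd} and \cite{b4}, much as you sketch. Two remarks before the main objection. Theorem \ref{th:loccrit-intr} only gives \emph{uniqueness} of the extension for a space already known to be a building with chambers missing; \emph{existence} requires exhibiting a saturated ample family of alternating $6$-cycles in the extension invariant $O_{X_2,\G_2}$ (Theorem \ref{th:loccrit}). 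And none of this can be run until you actually write down the edge labels and orientations of your six triangles, which is where the real content of the construction lies; the paper's choice of shapes (two triangles plus one parallelogram) is precisely what encodes the two length-$\pi$ strips you identify in the spectrum of $G_2$.

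The genuine gap is your argument for $\ZI^2\leq\G_2$. A length-$\pi$ path in the link singles out a geodesic germ through a vertex, not a flat; having such a direction at every vertex of a cocompact complex does not produce an isometrically embedded periodic plane. The paper's own examples refute the implication: $G_1$ has three maximal non-branching paths of length $\pi$ and $G_4$ has one, yet $X_1=\tilde V_1$ and $X_4=\tilde V_4$ are hyperbolic and $\G_1$ contains no copy of $\ZI^2$ (Proposition \ref{prop- v1 hyperbolic} and the surrounding discussion). To extract a flat you must check, along an entire candidate plane, that every vertex link contains a circle of length exactly $2\pi$ through the relevant antipodal pairs and that these circles assemble coherently --- equivalently, exhibit a $\pi_1$-injective immersed flat torus or Klein bottle in $V_2$. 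The paper's route for the analogous group $\G_3$ is instead a short algebraic computation producing two explicit commuting elements from the relator; for $\G_2$ one should do the same with $u^{-2}vu^2v=v^2uv^{-2}$, or verify directly that the $2\times 1$ parallelogram face tiles a periodic flat in $X_2$. Your mesoscopic rank sketch points in the right direction (counting non-extendable flat discs as in \cite{rd} and \cite{b4}), but, as you concede, the uniform branching estimate is exactly the part that would need to be carried out; the paper omits this verification as well.
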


Mesoscopic rank is defined in \cite{rd}, and is meant to recognize spaces that contain balls of arbitrary large radius which are flat (isometric to balls in $\RI^2$) but \emph{cannot be embedded} into any flat $\simeq \RI^2$ of $X$. 
Other groups with this property include some groups of rank \sq\ (see \cite{rd}), or the braid group $B_4$ on 4 strings (see \cite{b4}).
 
To prove Proposition \ref{prop83}, we consider the complex $V_2$ built out of two equilateral triangles and a parallelogram of size $2\times1$ as follows:
\begin{figure}[h]
\centering
\begin{tikzpicture}

\begin{scope}[xshift=-2cm,yshift=0cm]
\draw[-latex] (1,0) -- node[anchor=west]{\tiny{1}} (0.5,0.866);
\draw[-latex] (0.5,0.866) -- node[anchor=east]{\tiny{4}} (0,0);
\draw[-latex] (0,0) -- node[anchor=north]{\tiny{2}} (1,0);
\shade[ball color=black] (0,0) circle (0.3ex);
\shade[ball color=black] (1,0) circle (0.3ex);
\shade[ball color=black] (0.5,0.866) circle (0.3ex);
\end{scope}

\begin{scope}[xshift=0cm,yshift=0cm]
\draw[-latex] (1,0) -- node[anchor=west]{\tiny{3}} (0.5,0.866);
\draw[-latex] (0.5,0.866) -- node[anchor=east]{\tiny{4}} (0,0);
\draw[-latex] (0,0) -- node[anchor=north]{\tiny{1}} (1,0);
\shade[ball color=black] (0,0) circle (0.3ex);
\shade[ball color=black] (1,0) circle (0.3ex);
\shade[ball color=black] (0.5,0.866) circle (0.3ex);
\end{scope}

\begin{scope}[xshift=3cm]
\path (0:1cm) coordinate (P0);
\path (1*60:1cm) coordinate (P1);
\path (2*60:1cm) coordinate (P2);
\path (3*60:1cm) coordinate (P3);
\path (0,0) coordinate (P4);
\path (1.5,0.866) coordinate (P5);
\draw [-latex] (P0) -- node[anchor=west]{\tiny{1}}  (P5);
\draw [-latex] (P5) -- node[anchor=south]{\tiny{2}}  (P1);
\draw [-latex] (P1) -- node[anchor=south]{\tiny{3}}  (P2);
\draw [-latex] (P3) -- node[anchor=east]{\tiny{4}}  (P2);
\draw [-latex] (P4) -- node[anchor=north]{\tiny{2}}  (P3);
\draw [-latex] (P0) -- node[anchor=north]{\tiny{3}}  (P4);
\foreach \i in {0,...,5} \shade[ball color=black] (P\i) circle (0.3ex);
\end{scope}
\end{tikzpicture}
\caption{A complex $V_2$ with link $G_2$}\label{fig7}
\end{figure}
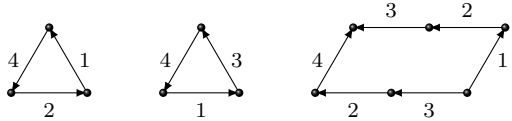

The fundamental group $\G_2=\pi_1(V_2)$ is defined by the presentation: 
\[
\G_2=\langle u,v\mid u^{-2}vu^2v=v^2uv^{-2}\rangle
\]

The proof that $X_2$ is of mesoscopic rank can be done in a similar fashion to that given in \cite{rd}  and \cite{b4}. We will omit it here since it doesn't bring any new idea. The other statement can be proved by using techniques developed in Section \ref{sec2}. It can be seen proved directly that $(X_2,\G_2)$ is a building with chamber missing.

\begin{proposition}\label{No local to global} There exists a polyhedral complex $X_3^6$ endowed with a free action of a group $\G_3^6$ which transitive on vertices, that is \emph{not} a building with missing chamber, but all of whose links are buildings with chambers missing (all links are isomorphic to $G_6$). 
\end{proposition}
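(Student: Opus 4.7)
The strategy is to exhibit an explicit finite polyhedral complex $V_3^6$ with a single vertex built out of three lozenges labelled in $\{1,2,3,4\}$; the universal cover $X_3^6 = \widetilde{V_3^6}$ will carry a free transitive action of $\G_3^6 = \pi_1(V_3^6)$ on its vertex set, and the goal is to arrange the labelling so that all links are isometric to $G_6$ while no extension into a Euclidean building exists.

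\textbf{Step 1 (Construction).} Following the pattern of the complexes $V_6^0$, $V_6^1$, $V_6^3$ assembled in Section \ref{sec2}, I would search among labellings of three lozenges for one in which (i) the combinatorial link at the unique vertex realises $G_6$, yet (ii) the triangular ``missing chambers'' prescribed by the unique local extension $G_6 \leadsto H$ of Lemma \ref{l54} fail to close up globally in the 1-skeleton of $V_3^6$. The enumeration is finite, because the possible edge labels are bounded by the four orbits of edges visible in the link and because $\Aut(G_6)$ has order $6$ by Proposition \ref{rank3}.

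\textbf{Step 2 (Link identification).} I would verify by face chasing, exactly as in the proof of Lemma \ref{l51}, that the reunion of all links at the unique vertex of $V_3^6$ is connected and has length spectrum $\{(\pi/3,6),(2\pi/3,6)\}$, and then check the two structural properties that characterise $G_6$: the paths of length $2\pi/3$ split as a disjoint union of a $2\pi$-circle $C$ and a tripod $T_\mathrm{u}$, and the paths of length $\pi/3$ form a tripod $T_\mathrm{d}$ attached to $C$. Since girth $\geq 2\pi$, the universal cover $X_3^6$ is then CAT(0) with every vertex link isometric to $G_6$.

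\textbf{Step 3 (No global extension).} Assume for contradiction that there is an extension $(X_3^6,\G_3^6)\leadsto(X',\G')$ into a Euclidean building. Because $\G_3^6$ is vertex transitive, the missing chambers around each vertex form a single local configuration, which by Lemma \ref{l54} must coincide, up to $\Aut(H)$, with the unique extension of $G_6$ into the incidence graph $H$ of the Fano plane. This determines the three triangles to be glued in on each fundamental domain, along with their edge labels and orientations. I would then track the boundary words of these three would-be triangles in the 1-skeleton of $V_3^6$ and show that the labelling of Step 1 is chosen so that at least one of these boundary words is not a closed loop in the 1-skeleton, or that gluing them all forces two distinct triangles to be identified and thereby creates a cycle of length $<2\pi$ in some resulting link, contradicting the building axioms at that vertex. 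Either outcome rules out the hypothetical extension.

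\textbf{Main obstacle.} The delicate point is Step 3: producing a labelling for which the three locally forced completions at the unique vertex, each individually realising $G_6 \leadsto H$, cannot be assembled into a globally consistent set of chambers. The verification is a finite combinatorial check controlled by $\Aut(G_6)$ and the edge orbits of $H$ under $\Aut(H)=\PGL(2,7)$; once one such labelling is pinned down, Steps 1--3 combine to give a complex $X_3^6$ whose links are all buildings with chambers missing (isomorphic to $G_6$) but which is itself not a building with chambers missing.
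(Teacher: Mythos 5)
Your strategy coincides with the paper's: exhibit a one-vertex complex glued from three labelled lozenges, verify by face chasing (as in Lemma \ref{l51}) that the link is $G_6$, and then show that the locally forced completion $G_6\leadsto H$ of Lemma \ref{l54} cannot be realized by globally consistent chambers. The paper packages your Step 3 more systematically: it introduces the extension invariant $O_{X,\G}$ of Definition \ref{def:EGI} and applies Lemma \ref{th:loccrit}, which says an extension exists iff $O_{X,\G}$ admits a saturated ample family of alternating $6$-cycles; for its example the invariant splits into three components with only two vertices each, so no alternating $6$-cycle exists and the extension is ruled out at a glance. Your ``track the boundary words of the three would-be triangles'' is a hands-on version of exactly this obstruction, and your dichotomy (a boundary word fails to close, or closing it degenerates a link) is sound given Tits's local criterion.

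The genuine gap is that the witness is never produced. The proposition is an existence statement, and everything in your plan is conditional on ``producing a labelling for which the three locally forced completions cannot be assembled'' --- which you explicitly defer. The finite search you describe is not guaranteed in advance to succeed (indeed, Section \ref{sec2} exhibits several three-lozenge complexes with link $G_6$ that \emph{are} buildings with one chamber missing, so most labellings fail your requirement (ii)), and without the explicit example Steps 1--3 prove nothing. The paper closes this gap by writing the complex down: three lozenges with boundary labels $(1,2;2,3)$, $(1,3;3,4)$, $(1,4;4,2)$, each doubling one of the labels $2,3,4$; it is this doubling that forces the extension invariant to break into three small components and kills the alternating $6$-cycles. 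To complete your argument you would need to exhibit this (or an equivalent) labelling and actually run the finite check, at which point your proof becomes essentially the paper's.
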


We give an example which may be compared to the constructions of Section \ref{sec2} (see also Remark \ref{rem:b4}), which \emph{were} leading to building with chambers missing. Consider the complex $V_6^3$ defined by:
  \begin{figure}[h]
  \centering
\begin{tikzpicture}[y=.8cm]
\node at (-.7,0) {$V_6^3=$};
\draw[-latex] (1,0) -- node[anchor=west]{\tiny{1}} (0.5,0.866);
\draw[-latex] (0.5,0.866) -- node[anchor=east]{\tiny{2}} (0,0);
\draw[-latex] (1,0) -- node[anchor=west]{\tiny{2}} (0.5,-0.866);
\draw[-latex] (0.5,-0.866) -- node[anchor=east]{\tiny{3}} (0,0);
\shade[ball color=black] (0,0) circle (0.3ex);
\shade[ball color=black] (1,0) circle (0.3ex);
\shade[ball color=black] (0.5,0.866) circle (0.3ex);
\shade[ball color=black] (0.5,-0.866) circle (0.3ex);
\draw[-latex] (2.6,0) -- node[anchor=west]{\tiny{1}} (2.1,0.866);
\draw[-latex] (2.1,0.866) -- node[anchor=east]{\tiny{3}} (1.6,0);
\draw[-latex] (2.6,0) -- node[anchor=west]{\tiny{3}} (2.1,-0.866);
\draw[-latex] (2.1,-0.866) -- node[anchor=east]{\tiny{4}} (1.6,0);
\shade[ball color=black] (1.6,0) circle (0.3ex);
\shade[ball color=black] (2.6,0) circle (0.3ex);
\shade[ball color=black] (2.1,0.866) circle (0.3ex);
\shade[ball color=black] (2.1,-0.866) circle (0.3ex);
\draw[-latex] (4.2,0) -- node[anchor=west]{\tiny{1}} (3.7,0.866);
\draw[-latex] (3.7,0.866) -- node[anchor=east]{\tiny{4}} (3.2,0);
\draw[-latex] (4.2,0) -- node[anchor=west]{\tiny{4}} (3.7,-0.866);
\draw[-latex] (3.7,-0.866) -- node[anchor=east]{\tiny{2}} (3.2,0);
\shade[ball color=black] (3.2,0) circle (0.3ex);
\shade[ball color=black] (4.2,0) circle (0.3ex);
\shade[ball color=black] (3.7,0.866) circle (0.3ex);
\shade[ball color=black] (3.7,-0.866) circle (0.3ex);
\end{tikzpicture}
\end{figure}

\noindent The fundamental group of $V_6^3$ is  (denoting $b^a=a b a^{-1}$):
\[
\G=\langle u,v\mid v  v^u  u^{v^{-1}}= v^uu^v  u \rangle
\]
with $H^1(V_6^3,\ZI)=\ZI$.
Links in $X_6^3=\tilde V_6^3$ are isometric to $G_6$, and $\G_6^3$ is transitive on vertices. That $(X_6^3,\G_6^3)$ is not a building with chambers missing can be checked using Theorem \ref{th:loccrit} below. The extension invariant of $(X_6^3,\G_6^3)$, in the sense of Definition \ref{def:EGI}, is as follows.
\begin{figure}[h]
\centering
\begin{tikzpicture}
\node at (-1,0) {$O_{X_6^3,\G_6^3}=$};
\coordinate [label=left: {}] (A1) at (0,0);
\coordinate [label=left: {}] (A2) at (1,0);
\coordinate [label=left: {}] (A3) at (1.5,0);
\coordinate [label=left: {}] (A4) at (2.5,0);
\coordinate [label=left: {}] (A5) at (3,0);
\coordinate [label=left: {}] (A6) at (4,0);

\draw[dashed] (A1) to [bend right] (A2);
\draw[-] (A1) to [bend left] (A2);
\draw[dashed] (A3) to [bend right] (A4);
\draw[-] (A3) to [bend left] (A4);
\draw[dashed] (A5) to [bend right] (A6);
\draw[-] (A5) to [bend left] (A6);

\shade[ball color=black] (A1) circle (0.3ex);
\shade[ball color=black] (A2) circle (0.3ex);
\shade[ball color=black] (A3) circle (0.3ex);
\shade[ball color=black] (A4) circle (0.3ex);
\shade[ball color=black] (A5) circle (0.3ex);
\shade[ball color=black] (A6) circle (0.3ex);

\end{tikzpicture}
\end{figure}

Having understood the above examples, we now turn to the main result of this section, which provides a local test  for deciding when the space under consideration is a building with chambers missing. Before stating our criterion, we need to define the \emph{order} of a building with chambers missing of type $\tilde A_2$. Recall that if $X$ is a Euclidean building of type $\tilde A_2$, then the order of $X$ is the number of faces incident to each edge, minus 1. One way to extend this definition to buildings with chambers missing is as follows.

Given a finite connected graph $G$, we call \emph{projective order of $G$} the number $q_*(G)$ defined as 
\[
q_{*}(G)={1\over 2} \sqrt{2 |G|_0-3}-1
\]
where $|G|_0$ is the number of vertices.

In other words,  $q_*$ satisfies $2(q_*^2+q_*+1)=|G|_0$. For the spherical building of a projective plane over the field $\F_q$, we have $q_*=q$, which is an integer. For spherical buildings with chambers missing, we have:

\begin{lemma} 
Let $G$ is a non-degenerate finite building with chambers missing of type $A_2$ and $G\leadsto H$ be an extension. Then $q_*(G)$ is an integer whose value is the order of $H$.
\end{lemma}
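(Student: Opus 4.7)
The plan is to reduce everything to a vertex count. Since $G$ is of type $A_2$, the ambient building $H$ is one-dimensional: it is the incidence graph of a (possibly exotic) projective plane, and its chambers are its edges. The crucial observation is that the extension $\lambda\colon G\leadsto H$ realizes $G$ as $H$ with certain open edges removed, and removing open edges does not remove vertices. So $|G|_0$ and $|H|_0$ should coincide exactly, and then the formula $2(q_*^2+q_*+1)=|G|_0$ should match the standard count $2(q^2+q+1)$ for the vertices of the incidence graph of a projective plane of order $q$.

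First, I would unpack the extension. Since the automorphism groups in the spherical setting are trivial, condition (c) of Definition \ref{bmiss} forces $\lambda$ to be injective, and conditions (a)-(b) together give $\lambda(G)=H\setminus\bigsqcup_{i=1}^n \stackrel{\circ}{C_i}$. Because each $C_i$ is a chamber of the one-dimensional complex $H$, its interior is an open edge; removing it kills no vertex. Therefore $\lambda$ restricts to a bijection between the vertex sets of $G$ and $H$, yielding $|G|_0=|H|_0$.

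Second, I would compute $|H|_0$. Writing $q$ for the order of $H$, the associated projective plane has $q^2+q+1$ points and the same number of lines, and the two sides of the incidence graph are precisely these; hence $|H|_0=2(q^2+q+1)$. Plugging into the defining equation of $q_*$, we obtain $2(q_*^2+q_*+1)=2(q^2+q+1)$, which rearranges as $(q_*-q)(q_*+q+1)=0$. Taking the non-negative root gives $q_*(G)=q\in\ZI$, as claimed.

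The only subtlety is the dimension of $H$, for if $H$ had chambers of dimension $\geq 2$ the vertex count could drop when chambers are removed. Here it is forced by the type $A_2$ of the extension. The non-degeneracy hypothesis is not used in the computation itself; it intervenes only to guarantee (via the spherical analogue of Lemma \ref{lem12}) that the Coxeter type (and hence the numerical value of the order) does not depend on the particular extension chosen, so that the statement ``$q_*(G)$ equals the order of $H$'' makes unambiguous sense as an invariant of $G$.
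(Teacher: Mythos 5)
Your proof is correct. The paper states this lemma without giving any proof, so there is nothing to compare it against; your argument supplies exactly the missing justification in the natural way: conditions (a)--(c) of Definition \ref{bmiss} (with trivial groups in the spherical case) identify $G$ with $H$ minus finitely many open edges, which preserves the vertex set since $H$ is one-dimensional, and then $|G|_0=|H|_0=2(q^2+q+1)$ forces $q_*(G)=q$ via the defining relation $2(q_*^2+q_*+1)=|G|_0$. Your closing remarks are also on point: the role of non-degeneracy is only to guarantee that the extension lands in a genuine $A_2$ building (rather than, say, a complete bipartite graph, as in the paper's remark on degenerate extensions), so that ``the order of $H$'' has its intended meaning and is independent of the chosen extension.
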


Let $X$ be a simplicial complex of dimension 2. We call \emph{projective order} of $X$ the number 
\[
q_*(X)=\max_{x\in X^{(0)}}  q_*(L_x),
\] 
which is finite if $X$ is uniformly locally finite. If $X$ is a Euclidean building of type $\tilde A_2$, then $q_*(X)$ coincide with the classical definition recalled above. If $X$ is a complex of rank \sq, then $q_*(X)\approx 2.19$.

We now assume until the end of this section that $X$ is simply connected and endowed with a free action of a countable group $\G$ with compact quotient, and denote by $q_*$ the projective order of $X$. 

\begin{definition}\label{def:EGI} Assume that every link of $X$ is a non-degenerate spherical building with chambers missing of type $A_2$ (in particular $q_*(X)$ is an integer). We call \emph{extension invariant of $(X,\G)$} the finite labelled graph $O_{X,\G}$ defined as follows. 
\begin{enumerate}
\item Let $E_0$ be the set of $\G$-orbit of (non-oriented) edges of $X$ which are incident to at most $q_*(X)$ chambers in $X$. (We think of elements in $E_0$ as middle thirds segments in their corresponding edge in $X$.)  We let $V$ be the  set of (equivariant) vertices corresponding to $E_0$ with natural boundary map, and associate an integer label equal to $q_*(X) - v+1\geq 1$ to edges in $E_0$, where $v$ is the chamber valency of the given edge orbit in $X$.

\item For any two distinct vertices $u,v\in V$ correspond to the same $\G$ orbit, so that $u$ and $v$ may be viewed as vertices of some link $L$ of $X$,  we associate an edge between $u$ and $v$  if and only at least 5 edges separate $u$ from $v$ in $L$. This set of edges is denotes $E_1$. 
\end{enumerate}
The extension invariant $O_{X,\G}$ is the finite  graph $(V,E_0\cup E_1)$. Edges in $E_0$ (resp. $E_1$) are said to be of type 0 (type 1). We omit the labeling of edges of $E_0$ when the label is 1, and draw a double edge when the label is 2. 
\end{definition}

\begin{example}
The extension invariant of the complex $(X,\G)$ described in Section \ref{Sectinvert}, Fig.\ \ref{fig5}, is given by
\begin{figure}[h]
\centering
\begin{tikzpicture}
\draw[-] (0,1) -- (0,0);
\draw[-] (0.5,1) -- (0.5,0);
\draw[dashed] (0,1) -- (.5,1);
\draw[dashed] (0,0) -- (0.5,0);

\draw[-] (1,1) -- (1,0);
\draw[-] (1.5,1) -- (1.5,0);
\draw[-] (2,1) -- (2.5,1);
\draw[-] (2,0) -- (2.5,0);

\draw[dashed] (1.5,1) -- (2,1);
\draw[dashed] (1.5,0) -- (2,0);

\draw[dashed] (1,1) to [bend left] (2.5,1);
\draw[dashed] (1,0) to [bend right] (2.5,0);

\shade[ball color=black] (0,0) circle (0.3ex);
\shade[ball color=black] (0.5,0) circle (0.3ex);
\shade[ball color=black] (1,0) circle (0.3ex);
\shade[ball color=black] (1.5,0) circle (0.3ex);
\shade[ball color=black] (2,0) circle (0.3ex);
\shade[ball color=black] (2.5,0) circle (0.3ex);

\shade[ball color=black] (0,1) circle (0.3ex);
\shade[ball color=black] (0.5,1) circle (0.3ex);
\shade[ball color=black] (1,1) circle (0.3ex);
\shade[ball color=black] (1.5,1) circle (0.3ex);
\shade[ball color=black] (2,1) circle (0.3ex);
\shade[ball color=black] (2.5,1) circle (0.3ex);
\end{tikzpicture}
\end{figure}

\noindent Therefore by Theorem \ref{th:loccrit} this complex is not a building with chambers missing. The two orbits of vertices in $X$ corresponds to the two horizontal sets of 6 vertices. Edges of type 1 are dashed.
\end{example}

We say that a path (possibly non injective on edges) in $O_{(X,\G)}$ is \emph{alternating} if two consecutive edges are of different type. We say that a family $\cF$ of subgraphs of $O_{X,\G}$ is  \emph{saturated} if the number of subgraphs of $\cF$ containing a given edge of type 0  is equal to the label of this edge, and that $\cF$ is \emph{ample} if for every link $L$ of $X$, the graph obtained by adding to $L$ all edges of type 1 of $\cF$ corresponding $L$ is ample (i.e.\ contains no cycle of length $\leq 5$).  

\begin{lemma}[Local criterion for $\tilde A_2$ buildings with chambers missing]\label{th:loccrit}
Let $(X,\G)$ be as above and assume that every link of $X$ is a spherical building with chambers missing of type $A_2$. Then $(X,\G)$ is a Euclidean building with chambers missing if and only if there exists a saturated ample family of alternating 6-cycles in the extension invariant  $O_{X,\G}$ of $(X,\G)$. 
\end{lemma}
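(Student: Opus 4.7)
The plan is to apply Tits' local-to-global theorem (Theorem~\ref{thm:titslocal}): a CAT(0) 2-complex is a Euclidean $\tilde A_2$ building if and only if every vertex link is a spherical $A_2$ building. The extension invariant $O_{X,\G}$ is designed so that an alternating 6-cycle encodes the boundary data of a potential missing chamber: its three type-0 edges correspond to the three 1-edges of the triangle (each to be populated by one additional chamber) and its three type-1 edges correspond to the three corner arcs that would appear in the three vertex links of the triangle. Saturation and ampleness translate precisely to the combinatorial conditions on the links needed to make them full spherical $A_2$ buildings of order $q_*$.

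For the direct implication, given an extension $\lambda : (X,\G) \leadsto (X',\G')$ with missing chamber orbits $\G'C_1,\ldots,\G'C_n$, associate to each $\G'C_i$ an alternating 6-cycle $\omega_i$ in $O_{X,\G}$ by traversing $\partial C_i$ alternately through its 1-edges (giving type-0 edges) and its corners (giving type-1 edges). The corners are genuinely type-1 edges: in the link of $X'$ at a vertex of $C_i$, the two vertices representing the adjacent 1-edges of $C_i$ are joined by an arc of length $\pi/3$ coming from $C_i$, and removing $C_i$ raises their distance in the link of $X$ to at least 5, since the full link has girth 6. The family $\cF=\{\omega_i\}_{i=1}^n$ is saturated because each edge of $X'$ carries exactly $q_*+1$ chambers, hence an under-populated edge of $X$ appears in exactly as many $\omega_i$ as its label indicates; it is ample because adding to each link $L_x$ the type-1 edges contributed by $\cF$ reconstructs the link of $X'$, which has girth $6$.

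For the converse, given a saturated ample family $\cF$, each $\omega\in\cF$ determines via its three type-0 edges a triangle in the quotient $V=X/\G$; attach a 2-cell along this triangle for every $\omega$ to obtain a complex $V''$, and set $X'=\widetilde{V''}$, $\G'=\pi_1(V'')$. At any vertex of $X'$ the link is that of the corresponding vertex in $X$, augmented by the type-1 edges of the 6-cycles of $\cF$ passing through it. Saturation makes every vertex of each augmented link have valency $q_*+1$; ampleness forces girth $\geq 6$. A bipartite $(q_*+1)$-regular graph with $2(q_*^2+q_*+1)$ vertices and girth $\geq 6$ saturates the Moore bound, and therefore coincides with the incidence graph of a projective plane of order $q_*$, i.e., a spherical $A_2$ building: indeed any two vertices on the same side of the bipartition then lie at distance exactly 2 through a unique common neighbor, which directly produces the point-line incidence of a projective plane. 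Since $X'$ is simply connected by construction and all links have girth $\geq 2\pi$, $X'$ is CAT(0) by Cartan--Hadamard, and Tits' theorem yields that $X'$ is a Euclidean $\tilde A_2$ building. The canonical lift of the composition $X\to V\hookrightarrow V''$ to universal covers provides the map $\lambda : X\to X'$, which is a covering onto $X'\setminus\bigcup\G'\stackrel{\circ}{C_i}$ (the missing chambers being the lifts of the added 2-cells) and satisfies axioms (a)--(c) of Definition~\ref{bmiss}.

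The hard part will be the two verifications just used: (i) the Moore-bipartite-graph characterization needed to identify each augmented link as a full spherical $A_2$ building (this is a combinatorial extremal-graph argument which uses that $\cF$ prescribes not just which edges are added but also that no shorter cycles are created); and (ii) the orbit-preservation axiom (c) of Definition~\ref{bmiss} for the constructed extension, which requires identifying $\G'$ with $\G/\langle N\rangle$, where $N$ is the normal closure in $\G=\pi_1(V)$ of the boundary loops of the added 2-cells, and tracking equivariantly how the covering $X\to X'\setminus\bigcup\G'\stackrel{\circ}{C_i}$ arises from lifting the inclusion $V\hookrightarrow V''$ to universal covers.
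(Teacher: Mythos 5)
Your proposal follows essentially the same route as the paper's proof: the forward direction associates to each orbit of missing chambers an alternating 6-cycle (type-0 edges from the sides, type-1 edges from the corners), and the converse attaches 2-cells to $V=X/\G$ along the 6-cycles of $\cF$, identifies each augmented link as a projective-plane incidence graph via the regularity, vertex-count and girth conditions, and concludes with Tits' local criterion. The two points you flag as remaining work (the extremal-graph characterization and the verification of axiom (c)) are exactly the steps the paper also treats as routine, so nothing essential is missing.
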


\begin{proof}
Assume first that $(X,\G)$ is a building with chambers missing and choose an extension $(X,\G)\leadsto (X',\G')$. 
If $C_1,\ldots, C_n$ denote the corresponding family of chambers missing, then to $C_i$ is associated an alternating 6-cycle of $O_{X,\G}$: each edge of $C_i$ corresponds to an edge of type 0 in $O_{X,\G}$, and each angle of $C_i$ corresponds to an edge of type 1 in $O_{X,\G}$.  It is easy to check that all conditions are satisfied. We note that the 6-cycles in $O_{X,\G}$ can be of two types: either they are injective, or they contain exactly two  edges of $O_{X,\G}$ of type 0.

We prove the converse. Let $\cF$ be a saturated family of alternating 6-cycles in $O_{X,\G}$. For each vertex $v$ in $X$, consider the graph $L'_v$ which is obtained from the link  $L_v$ of $v$ by adding all edges of type 1 of $\cF$ which correspond to $v$. 

\begin{claim} $L_v'$ is a building of type $A_2$.
\end{claim} 
\begin{proof}
By the ampleness condition, the girth of $L_v'$ at least 6. By condition (1) in \ref{def:EGI} and since $\cF$ is saturated, the valency at vertices in $L_v'$  is constant equal to $q_*(X)+1$, 
and  
\[
|L_v'|_0=|L_v|_0=2(q_*(X)^2+q_*(X)+1).
\] 
This condition is well known to characterize incidence graphs of a (possibly exotic) projective plane.
\end{proof}

Thus $L_v\leadsto L_v'$ is an extension of $L_v$ into a spherical building. Furthermore, by definition of $O_{X,\G}$, the construction of $L_v'$ is equivariant with respect to the action of $\G$. 

Let us now construct a CW complex $V'$ containing $X/\G$ as follows. The 1-skeleton of $V'$ is that of $X/\G$, and faces are either faces of $V$ or new faces corresponding to the 6-cycles given by $\cF$. Let $X'=\tilde V'$ and $\G'=\pi_1(V')$. It is straightforward to check that the link at a vertex $v\in X'$ is $L_v'$. By Tits local criterion (see Theorem \ref{thm:titslocal}), it follows that $X'$ is a Euclidean building. Thus $(X,\G)\leadsto (X',\G')$ is an extension and $(X,\G)$ is a Euclidean building with chambers missing. 
\end{proof}

The above result further provides an upper-bound on the number of admissible extension, namely, the number of saturated family of alternating 6-cycle in the extension invariant. Although this is only a rough estimate for general buildings with chambers missing, this upper-bound is useful in two cases of interest to us: for concrete examples studied in this paper where a single is chamber missing (e.g. Corollary \ref{cor:unicity}),  and for generic buildings in certain random models (e.g. Corollary \ref{cor:unicity2}).  Unicity of extensions up to isomorphism can be seen as a  rigidity property of the corresponding building with a missing chamber.

\begin{theorem}\label{cor:unicity}
Let $(X,\G)$ be a building of type $\tilde A_2$ with a single chamber missing. Then there is a unique extension $(X,\G)\leadsto (X',\G')$ into a building of type $\tilde A_2$. 
\end{theorem}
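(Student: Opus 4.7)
My plan: The strategy is to apply Theorem \ref{th:loccrit}, by which extensions of $(X,\G)$ into Euclidean buildings of type $\tilde A_2$ are enumerated (with an upper bound) by saturated ample families of alternating $6$-cycles in the extension invariant $O_{X,\G}$. Since $(X,\G)$ has by hypothesis a single missing chamber $C$, such a family must be a singleton $\cF=\{\gamma\}$; existence of $\gamma$ is given by the hypothesis, so it suffices to prove uniqueness of $\gamma$.

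First I would describe the relevant structure of $O_{X,\G}$. Each side of $C$ contributes one unit to the label of its edge-orbit in $V=X/\G$, so the labels in $E_0$ sum to $3$; depending on how the three sides of $C$ are identified in $V$, the set $E_0$ consists of $1$, $2$, or $3$ edges with label pattern $(3)$, $(1,2)$, or $(1,1,1)$, respectively. Next, at each $\G$-orbit of vertices of $C$, the link $L$ is a projective-plane incidence graph (of girth $6$ and diameter $3$) with precisely the edges corresponding to $C$-incidences at that vertex removed; any pair of vertices of $L$ at combinatorial distance $\geq 5$ is therefore necessarily the endpoint-pair of such a removed edge, i.e.\ an \emph{angle} of $C$.

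I would then deduce uniqueness of $\gamma$ as follows. The saturated condition forces $\gamma$ to use each $e\in E_0$ with multiplicity equal to its label, hence exactly $3$ type-$0$ edges with multiplicity, which are exactly the three sides of $C$. Each transition of $\gamma$ between consecutive type-$0$ edges crosses a vertex $v\in V$, and the intermediate type-$1$ edge must be a pair at distance $\geq 5$ in $L_v$ --- and by the previous paragraph necessarily the angle of $C$ at $v$ between these two sides. Hence $\gamma$ is forced to traverse $\partial C$ alternately along sides and interior angles and is completely determined by $C$.

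The main technical point lies in the degenerate cases where several sides or vertices of $C$ get identified in $V$: then $\gamma$ is non-injective, and one must check that there is no residual freedom at each step. This is resolved by observing that each incidence of $C$ at a vertex still contributes a canonical missing-edge angle in the corresponding link, so that once the walk has arrived at a vertex along a specified side of $C$, the next angle and the next side are forced. Combined with Theorem \ref{th:loccrit}, this yields the unique extension of $(X,\G)$ into a Euclidean building of type $\tilde A_2$.
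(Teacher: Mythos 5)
Your overall strategy is exactly the paper's: reduce to uniqueness of the saturated ample family of alternating $6$-cycles in $O_{X,\G}$ via Theorem \ref{th:loccrit}. However, there is a genuine gap in the key forcing step. You assert that in a link $L$ (a building of type $A_2$ and order $q$ with the angles of $C$ removed), \emph{any} pair of vertices at distance $\geq 5$ must be the endpoint-pair of a removed edge. This is false when $q=2$. Take two removed angles $e,f$ opposite in an apartment $A$ of the Heawood graph and a third removed angle $g$ lying on the unique path of length $3$ between two opposite ("diagonal") vertices $x\in e$, $y\in f$ of $A$ that avoids $A$; then all $q+1=3$ paths of length $3$ from $x$ to $y$ are destroyed, so $d_L(x,y)\geq 5$ even though $\{x,y\}$ is not an angle of $C$. (This is exactly the configuration giving the link $G_2$ in Proposition \ref{fact6prime}, and $q=2$ is the order of essentially all examples in the paper.) Consequently the extension invariant can contain a type-$1$ edge that does not come from an angle of $C$, and your claim that "the next angle and the next side are forced" breaks down at that vertex: a priori the alternating $6$-cycle could leave along the diagonal instead.

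To close the gap one needs the extra argument the paper supplies: any two saturated alternating $6$-cycles must share the same type-$0$ edges while their type-$1$ edges form two \emph{disjoint} perfect matchings of the six unsaturated vertices; since the lone extra diagonal meets endpoints of $e$ and $f$, and the remaining endpoints of $e$ and $f$ (e.g.\ the vertex $x_2$ with $e=\{x_1,x_2\}$) lie on no type-$1$ edge other than $e$ itself, no second matching exists and uniqueness survives. Note that for $q\geq 3$ your distance claim is correct (a distance-$3$ pair has $q+1\geq 4$ internally disjoint paths of length $3$, and only three edges are removed), so the missing case is precisely $q=2$. A second, more minor point: you allow a type-$0$ edge of label $3$ (all three sides of $C$ identified), whereas the paper's proof asserts the labels are $1$ or $2$; you should either rule this configuration out or handle it, since a saturated family would then have to traverse that edge three times and your "walk along $\partial C$" description needs justification there.
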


\begin{proof}
Let $O_{X,\G}$ be the extension invariant of $(X,\G)$. We assume that $O_{X,\G}$ contains at least one saturated family of alternating 6-cycle and show that this family is unique. Since a single chamber is missing, $O_{X,\G}$ contains at most 6 vertices, and edges of type 0 have label either  1 or 2. If there is an edge with label 2, then one checks that the extension invariant is of the form

\begin{figure}[h]
\centering
\begin{tikzpicture}
\coordinate [label=left: {}] (A1) at (0,1);
\coordinate [label=left: {}] (A2) at (0,0);
\coordinate [label=left: {}] (A3) at (1,0);
\coordinate [label=left: {}] (A4) at (1,1);

\draw[dashed] (A1) -- (A2);
\draw[dashed] (A2) to [bend right] (A3);
\draw[dashed] (A3) -- (A4);
\draw[double] (A2)  --  (A3);
\draw[-] (A1) --  (A4);

\shade[ball color=black] (A1) circle (0.3ex);
\shade[ball color=black] (A2) circle (0.3ex);
\shade[ball color=black] (A3) circle (0.3ex);
\shade[ball color=black] (A4) circle (0.3ex);

\end{tikzpicture}
\end{figure}

\noindent hence unicity in that case. Let us now assume that type 0 edges have label 1. If at least one of these type 0 edges  has extremities associated to different orbits of vertices of $X$, then there must be two of them and unicity is then readily checked. Thus, we now assume that all vertices of $O_{X,\G}$ correspond to a same link $L$ of $X$. The link $L$ is a spherical building with 3 missing chambers of order $q=q_*(X)$,
and all saturated family in the extension invariant consists of a single alternating cycle of length 6. We must show that there is only one such a family, i.e.\ one possible 6-cycle.

Let $C$ be such a cycle and denote by $E=\{e,f,g\}$ the three edges between (unsaturated) vertices of $L$ which corresponds to type 1 edges of $C$, and let $L\leadsto L'$ be the corresponding extension into a building of type $A_2$. 

Assume first  that $E$ is included is an apartment of $L'$. Then since $L'$ is a building the extremities of any two distinct edges of $E$ can be then joined by a path in $L$ of length at most 4. Hence $O_{X,\G}$ is reduced to $C$.

Otherwise, at least two edges of $E$, say $e$ and $f$, are at distance 2 from each other (write $|e-f|=2$ in that case). Let $A$ be an apartment of $L'$ containing $e$ and $f$.
If $|g-e|=2$ and $|g-f|=2$, then extremities of edges in $E$ are at distance at most 3 in $L$ and so $O_{X,\G}$ is also reduced to $C$. Let us now assume that (say) $|g-e|=1$. If $|g-f|=2$, then it is easily seen that $O_{X,\G}$ reduces to $C$. Otherwise
 $|g-f|=1$, and there are two possibilities: if $q=2$, then a diagonal of the apartment $A$ contains $g$. Therefore the extension invariant consists of $C$ and of exactly one more edge corresponding to this diagonal---it can be shown that this geometry for the extension invariant characterizes $G_2$ among $A_2$ buildings with 3 missing chambers. The completion is then unique in this case as well (in fact, if there exist two distinct alternating 6-cycle $C$ and $C'$ in the invariant graph of $X,\G$, then their set of edges of type 0  must coincide, while their set of edges of type 1 must be disjoint). In other cases,  $q\geq 3$, and $O_{X,\G}$ is reduced to $C$.
\end{proof}

More generally, we have: 

\begin{theorem}\label{cor:unicity2}
Let $(X,\G)$ be a building of type $\tilde A_2$ with chambers missing and $(X,\G)\leadsto (X',\G')$ be an extension. Assume that the distance between equivariant missing chambers in $X'$ is at least 2. Then $(X,\G)\leadsto (X',\G')$ is the unique extension of $(X,\G)$ into a Euclidean building of type $\tilde A_2$.
\end{theorem}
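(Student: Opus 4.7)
The plan is to invoke the local criterion (Lemma~\ref{th:loccrit}): extensions of $(X,\G)$ into Euclidean buildings of type $\tilde A_2$ are in bijection with saturated ample families of alternating $6$-cycles in the extension invariant $O_{X,\G}$. The given extension $(X,\G)\leadsto (X',\G')$ produces one such family $\cF_0$, made of one alternating $6$-cycle $C_i$ per $\G'$-orbit of missing chambers. I want to show that $\cF_0$ is the \emph{only} saturated ample family of alternating $6$-cycles in $O_{X,\G}$, which by \ref{th:loccrit} will give uniqueness of the extension.

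First I would use the distance hypothesis to establish a structural decomposition of $O_{X,\G}$. If $C_i$ and $C_j$ are missing chambers at distance $\geq 2$ in $X'$, then $C_i$ and $C_j$ share neither an edge nor a vertex. Read in $O_{X,\G}$, this says that the six vertices underlying the $6$-cycle of $\cF_0$ associated to $\G' C_i$ are disjoint from those associated to $\G' C_j$, and moreover no type-$1$ edge of $O_{X,\G}$ can connect vertices from these two disjoint subsets: a type-$1$ edge records an angle in some link $L_v$, and two link-edges coming from the boundaries of distinct missing chambers can only share the vertex $v$ if these chambers both meet $v$, contradicting the distance assumption. Consequently $O_{X,\G}$ splits, orbit by orbit, into independent pieces, and each such piece is isomorphic to the extension invariant of the \emph{auxiliary} ``single missing chamber'' configuration obtained from $X'$ by adding back every missing chamber except one representative.

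With this decomposition in place, uniqueness of the saturated ample family reduces to the single-missing-chamber analysis of Theorem~\ref{cor:unicity}, carried out piece by piece. Within each piece, the candidate alternating $6$-cycle is governed entirely by the mutual positions of the three link-edges $E=\{e,f,g\}$ bounding the corresponding missing chamber in the associated spherical link; the case split of the proof of \ref{cor:unicity} (edges contained in an apartment; two at distance $2$; one at distance $1$ and one at distance $2$; all at distance $2$, with the subtlety for $q=2$) applies verbatim and forces the $6$-cycle in that piece to coincide with the one given by $\cF_0$. Collecting the pieces gives $\cF_0$ as the unique saturated ample family in $O_{X,\G}$.

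The delicate step will be the second one, where I must rule out spurious type-$1$ edges of $O_{X,\G}$ coupling pieces associated to different missing chambers. The naive obstruction is handled by the distance assumption, but care is needed when several missing chambers belong to the \emph{same} $\G'$-orbit, since then a single link $L_v$ can meet distinct chambers of that one orbit and a priori introduce extra alternating $6$-cycles involving such coupling edges. Verifying that the equivariant distance-$\geq 2$ hypothesis precisely kills these cases, and that the ampleness requirement does not silently reintroduce cross-piece coupling when a hypothetical alternative $6$-cycle is tested, is what I expect to be the main technical obstacle; once it is settled, the remainder is a direct extension of the argument of Theorem~\ref{cor:unicity}.
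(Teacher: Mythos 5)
Your proposal is correct and follows essentially the same route as the paper: the paper likewise uses the distance hypothesis to split the extension invariant $O_{X,\G}$ into connected components, one per missing chamber, and then applies the single-chamber uniqueness result (Theorem \ref{cor:unicity}) to each component, phrased as an induction that fills in one chamber at a time rather than as your parallel analysis of the saturated ample family.
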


\begin{proof}
If the distance between the missing chambers is at least 2, then the invariant graph $O_{X,\G}$ has a decomposition into connected components associated to each missing chamber, so we can apply inductively Theorem \ref{cor:unicity} to each of them after, choosing an ordering of the connected component of $O_{X,\G}$.  This produces a sequence of extensions
\[
(X,\G)\leadsto (X_2,\G_2)\leadsto\cdots (X_{n-1},\G_{n-1})\leadsto (X',\G')
\]
where $n$ is the number of missing chambers, and at each step the extension $(X_i,\G_i)\leadsto (X_{i+1},\G_{i+1})$ is unique into a building with chambers missing. Observe in particular that a different choice for the ordering of the connected components of $O_{X,\G}$ leads to the same building $(X',\G')$.
\end{proof}

This result will be important for our treatment of random buildings with chambers missing  in \cite{random},  both in the model with few missing chambers  and in the density model when the density parameter $d$ satisfies $d<1/2$.

\begin{remark} The above results can be seen as an ``extension rigidity" property for buildings with chambers missing. This appears to be a \emph{remanent} rigidity property.      
It seems remarkable indeed that a similar statement does not hold for certain groups \emph{of rank \sq} which act \emph{transitively} on the vertex set of their associated complexes. For an example, consider the two complexes $V_0^2$ and $\check V_0^2$ from Section 4 of \cite{rd}. Recall that they have the following presentation:
\begin{align*}
V^2_0=[[1, 2, 3], [1, 4, 5], [1, 6, 7], [2, 4, 6], [2, 8, 5], [3, 6, 8], [3, 7, 5], [4, 8, 7]]\\
\check V_0^2=[[1, 2, 3], [1, 4, 5], [1, 6, 7], [2, 6, 4], [2, 8, 5], [3, 6, 8], [3, 7, 5], [4, 8, 7]].
\end{align*}
Both are extensions, in the sense considered above, of the following complex of rank \sq\ with one chamber missing:
\begin{align*}
[[1, 2, 3], [1, 4, 5], [1, 6, 7],  [2, 8, 5], [3, 6, 8], [3, 7, 5], [4, 8, 7]],
\end{align*}
yet $V_0^2$ is not isomorphic to $\check V_0^2$.
\end{remark}

\section{The graph $G_5$ (the inverse pyramid)}\label{Sectinvert}

For $G_i$, $i=1\ldots 6$, $i\neq 5$, we have shown how to construct a CAT(0) simplicial complex whose automorphism group is transitive on vertices and whose link is isometric to $G_i$.  
In this section we  prove the following result. Part b) and part c) of the theorem are stated after the proof of part  a).

\begin{theorem}\label{th101} a) There is no  simplicial complex whose link at every vertex is simplicially isomorphic to the `inverse pyramid' $G_5$.
\end{theorem}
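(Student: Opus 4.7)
I would argue by contradiction. Suppose $X$ is a 2-dimensional simplicial complex with $L_v\cong G_5$ at every vertex $v$ of $X$. The argument rests on the following combinatorial features of $G_5$: it has $8$ vertices and $12$ edges, is $3$-regular, and every vertex-neighbourhood in $G_5$ is an independent set (so $G_5$ is bipartite and triangle-free); moreover the $\Aut(G_5)$-orbits on $V(G_5)$ are the pole pair $O_1=\{T,V\}$ and the hexagon orbit $O_2=\{A,B,C,D,E,F\}$, and each vertex has a unique antipode at distance $3$. Consequently every vertex of $X$ has $8$ incident edges and $12$ incident triangles, each edge lies in exactly $3$ triangles, and the partition of the $8$ incident edges at $v$ into the $2$ poles and the $6$ hexagon vertices is intrinsic.

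Next I would type each oriented edge $(v,w)$ of $X$ by the $\Aut(G_5)$-orbit of the vertex $\overline{vw}\in L_v$, so that each vertex of $X$ has exactly $2$ outgoing $O_1$-edges and $6$ outgoing $O_2$-edges. Since in $G_5$ every edge joins either $O_1$ to $O_2$ or $O_2$ to $O_2$ (never $O_1$ to $O_1$), at each vertex of a triangle of $X$ the pair of incident edges is never of type $(O_1,O_1)$. Moreover, the independence of neighbourhoods in $G_5$ means that the three "third vertices" $u_1,u_2,u_3$ of the three triangles at any edge $e=vw$ of $X$ yield edges $vu_1,vu_2,vu_3$ forming an independent triple in $L_v$: no triangle $(v,u_i,u_j)$ is a face of $X$. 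Finally, the fact that every vertex has a canonical antipode at distance $3$ in $G_5$ gives a canonical antipodal pairing on the eight edges at every vertex of $X$.

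The plan is then to examine a small local configuration and propagate these constraints. Fix $v\in X$ with an identification $L_v\cong G_5$, label the $8$ neighbours of $v$ accordingly by $w_T,w_V,w_A,\ldots,w_F$, and follow the forced triangles around $v$ and their consequences in the links $L_{w_x}$. For example the three triangles at the $O_1$-edge $vw_T$ give edges $w_Tw_A,w_Tw_C,w_Tw_E$, and in $L_{w_T}\cong G_5$ these three edges together with $w_Tv$ realise $\overline{w_Tv}$ together with its three neighbours, which must again be an independent triple; this rules out certain triangles at $w_T$. Iterating with the $O_1$-edge $vw_V$, and then tracing the boundary $6$-cycle $w_A w_B w_C w_D w_E w_F$ of the hexagonal wheel at $v$ through the induced constraints in the links $L_{w_A},\ldots,L_{w_F}$, produces two incompatible requirements on some face or non-face in the $2$-neighbourhood of $v$.

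The main obstacle is isolating the precise closed configuration yielding the contradiction. The rigidity comes from the fact that, by Proposition~\ref{rank3}, $G_5$ is the only graph in the list $G_1,\ldots,G_6$ for which every root has rank $3/2$: the "isolated flat" count along every root is exactly $1$, leaving no combinatorial slack for the surgery used to construct $V_i$ in the other five cases. I expect the contradiction to materialise as an overdetermined system at the second neighbourhood of $v$, parallel in spirit to the local criterion of Theorem~\ref{th:loccrit} but ruling out, rather than establishing, an extension. The weakening of the gluing axiom discussed later in the section (leading to "weak" buildings with chambers missing and in particular to the complex of Fig.~\ref{fig-groupG}) is then precisely what is needed to restore consistency in the $G_5$ setting.
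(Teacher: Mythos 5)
There is a genuine gap, and it occurs at the very first step: you have misidentified the simplicial structure of $G_5$. As a spherical building of type $A_2$ and order $2$ with three chambers missing (Proposition \ref{fact6prime}), $G_5$ is the incidence graph $H$ of the Fano plane ($14$ vertices, $21$ edges) with $3$ pairwise disjoint edges deleted; it therefore has $14$ vertices and $18$ edges, of which $8$ vertices have valency $3$ and $6$ have valency $2$ --- this is exactly what the length spectrum $\{(\pi/3,6),(2\pi/3,6)\}$ records. Figure \ref{IP} suppresses the six valency-$2$ vertices, which is why you read off an $8$-vertex $3$-regular graph. The consequences you draw from $3$-regularity (every vertex of $X$ has $8$ incident edges and $12$ incident triangles, every edge of $X$ lies in exactly $3$ triangles) are false, and no argument built on them can succeed: as the paper points out immediately after the statement of Theorem \ref{th101}, when the link has \emph{constant} valency such complexes always exist by the standard constructions of \cite{BS}, so a proof proceeding from ``each edge lies in exactly $3$ triangles'' would be proving a false statement. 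Independently of this, your proposal never actually produces the contradiction; it explicitly defers the decisive step (``isolating the precise closed configuration'') and only conjectures that the second neighbourhood of $v$ is overdetermined.

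The real obstruction is precisely the non-constancy of the valency, which your reading erases. In $G_5$ the three neighbours of each apex are valency-$2$ vertices, every valency-$3$ vertex has at least one valency-$2$ neighbour, and no two valency-$2$ vertices are adjacent. The paper's proof exploits this in three moves: choose $p_1$ adjacent to $p_0$ with $\overline{p_0p_1}$ an apex of $L_{p_0}$, so that $e=[p_0,p_1]$ lies in three triangles $t_1,t_2,t_3$ whose remaining edges through $p_0$ each lie in exactly two triangles; in $L_{p_1}$ the vertex $\overline{p_1p_0}$ has valency $3$ and hence admits a valency-$2$ neighbour, corresponding to some $t_k$ with third vertex $p_2$, so $[p_1,p_2]$ also lies in exactly two triangles; then $L_{p_2}$ contains the two \emph{adjacent} valency-$2$ vertices $\overline{p_2p_0}$ and $\overline{p_2p_1}$, which is impossible in $G_5$. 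If you redo your orbit analysis on the correct $14$-vertex graph, the pole/hexagon dichotomy you identified survives, but the invariant that closes the argument is the valency pattern along a single triangle, not the antipodal pairing or the independence of neighbourhoods.
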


The mere assumption here is that every face is a triangle, with no additional further (e.g. metric) structure. This is in  sharp contrast with what happens for the other $G_i$ (in particular with $G_6$), and of course with the well-known fact that such complexes always exist as soon as  \emph{the valency of the link  is constant} (see \cite[Prop. 4.1]{BS} and references);  furthermore, the usual constructions provide CAT(0) complexes (typically infinitely many  isomorphism classes), while \ref{th101} exhibits a purely combinatorial obstruction.

\begin{proof}
Assume that $X$ is such a complex, let $p_0$ be a vertex. Let $p_1$ the vertex of $X$ corresponding to the apex of one of the  pyramids in the link at $p_0$, and denote $e=[p_0,p_1]$. The following holds:
\begin{itemize}
\item there are 3 triangles $t_1,t_2,t_3$ in $X$ with base $e$;
\item the edge of $t_k$ containing $p_0$ and distinct from $[p_0,p_1]$ is included in a unique triangle $t_k'$ of $X$ distinct from $t_k$ ($k=0,1,2$).
\end{itemize}
Let $u$ be the vertex of the link at $p_1$ corresponding to $e$. The structure of $G_5$ shows that there exists at least one neighbor $v$ of $u$ which is a vertex of valency 2. Let $k=0,1,2$ be the index for which $v$ belong to $t_k$, and let $p_2$ be the vertex of $t_k$ which does not belong to $e$. Then one sees that the link at $p_2$ cannot be simplicially isomorphic to $G_5$. 
\end{proof}

As the proof indicates, the obstruction may vanish if the construction rules are slightly modified. For example, one can allow other types of local geometry instead of confining oneself to $G_5$ (which could be used at $p_1$, in the notation of the proof), or allow more general faces than triangles (which could prevent  $p_2$ to exist). 
We will now see that these two variations of the original riddle indeed allow the construction of  complexes, and furthermore, that these complexes can be taken to be CAT(0).
As for the examples constructed in the previous sections, the main difficulty is to understand what are the explicit shapes and labelings that fit the requirements. 

\bigskip

\emph{b)  There is a simplicial complex  with links isomorphic to either $G_1$ or $G_5$ (where both link appear), and whose automorphism group has exactly two orbits of vertices.}

\bigskip

The description of this complex is more involved than the others. 
\begin{figure}[h]
\centering
\begin{tikzpicture}

\begin{scope}[xshift=-2cm,yshift=0cm]
\draw[-latex] (1,0) -- node[anchor=west]{\tiny{1}} (0.5,0.866);
\draw[-latex] (0.5,0.866) -- node[anchor=east]{\tiny{1}} (0,0);
\draw[-latex] (0,0) -- node[anchor=north]{\tiny{2}} (1,0);
\shade[ball color=black] (0,0) circle (0.3ex);
\shade[ball color=black] (1,0) circle (0.3ex);
\shade[ball color=black] (0.5,0.866) circle (0.3ex);
\end{scope}

\begin{scope}[xshift=-2cm,yshift=-1.5cm]
\draw[-latex] (1,0) -- node[anchor=west]{\tiny{9}} (0.5,0.866);
\draw[-latex] (0.5,0.866) -- node[anchor=east]{\tiny{9}} (0,0);
\draw[-latex] (0.5,-0.866) -- node[anchor=east]{\tiny{5}} (0,0);
\draw[-latex] (1,0) -- node[anchor=west]{\tiny{3}} (0.5,-0.866);
\shade[ball color=black] (0,0) circle (0.3ex);
\shade[ball color=black] (1,0) circle (0.3ex);
\shade[ball color=black] (0.5,0.866) circle (0.3ex);
\shade[ball color=black] (0.5,-0.866) circle (0.3ex);
\end{scope}

\begin{scope}[xshift=0cm,yshift=.5cm]
\draw[-latex] (1,0) -- node[anchor=west]{\tiny{9}} (0.5,0.866);
\draw[-latex] (0.5,0.866) -- node[anchor=east]{\tiny{7}} (0,0);
\draw[-latex] (0,0) -- node[anchor=north]{\tiny{4}} (1,0);
\shade[ball color=black] (0,0) circle (0.3ex);
\shade[ball color=black] (1,0) circle (0.3ex);
\shade[ball color=black] (0.5,0.866) circle (0.3ex);
\end{scope}
\begin{scope}[xshift=0cm,yshift=-1cm]
\draw[-latex] (1,0) -- node[anchor=west]{\tiny{6}} (0.5,0.866);
\draw[-latex] (0.5,0.866) -- node[anchor=east]{\tiny{6}} (0,0);
\draw[-latex] (0,0) -- node[anchor=north]{\tiny{2}} (1,0);
\shade[ball color=black] (0,0) circle (0.3ex);
\shade[ball color=black] (1,0) circle (0.3ex);
\shade[ball color=black] (0.5,0.866) circle (0.3ex);
\end{scope}
\begin{scope}[xshift=0cm,yshift=-2.5cm]
\draw[-latex] (1,0) -- node[anchor=west]{\tiny{8}} (0.5,0.866);
\draw[-latex] (0.5,0.866) -- node[anchor=east]{\tiny{8}} (0,0);
\draw[-latex] (0,0) -- node[anchor=north]{\tiny{2}} (1,0);
\shade[ball color=black] (0,0) circle (0.3ex);
\shade[ball color=black] (1,0) circle (0.3ex);
\shade[ball color=black] (0.5,0.866) circle (0.3ex);
\end{scope}

\begin{scope}[xshift=3cm]
\path (0:1cm) coordinate (P0);
\path (1*60:1cm) coordinate (P1);
\path (2*60:1cm) coordinate (P2);
\path (3*60:1cm) coordinate (P3);
\path (0,0) coordinate (P4);
\draw [-latex] (P1) -- node[anchor=west]{\tiny{7}}  (P0);
\draw [-latex] (P1) -- node[anchor=south]{\tiny{3}}  (P2);
\draw [-latex] (P3) -- node[anchor=east]{\tiny{5}}  (P2);
\draw [-latex] (P4) -- node[anchor=north]{\tiny{4}}  (P3);
\draw [-latex] (P0) -- node[anchor=north]{\tiny{8}}  (P4);
\foreach \i in {0,...,4} \shade[ball color=black] (P\i) circle (0.3ex);
\end{scope}

\begin{scope}[xshift=3cm,yshift=-2cm]
\path (0:1cm) coordinate (P0);
\path (1*60:1cm) coordinate (P1);
\path (2*60:1cm) coordinate (P2);
\path (3*60:1cm) coordinate (P3);
\path (0,0) coordinate (P4);
\draw [-latex] (P1) -- node[anchor=west]{\tiny{3}}  (P0);
\draw [-latex] (P1) -- node[anchor=south]{\tiny{5}}  (P2);
\draw [-latex] (P3) -- node[anchor=east]{\tiny{4}}  (P2);
\draw [-latex] (P4) -- node[anchor=north]{\tiny{6}}  (P3);
\draw [-latex] (P0) -- node[anchor=north]{\tiny{7}}  (P4);
\foreach \i in {0,...,4} \shade[ball color=black] (P\i) circle (0.3ex);
\end{scope}
\end{tikzpicture}
\caption{}\label{fig5}
\end{figure}
It is constructed out of three shapes: triangles, lozenges and trapezes, as shown on Fig.\ \ref{fig5}. When simplicializing this complex into equilateral triangles, we obtain a simplicial complex $X=\tilde V$ satisfying b) above. Furthermore, $(X,\G)$ is not a building with chambers missing (cf. Section \ref{locglob}), it doesn't have the isolated flats property. We leave to the reader to check these claims and observe the following interesting fact: 

\bigskip

 \emph{b') The fundamental group of the complex described on Figure \ref{fig5} is of finite index in the full automorphism group of its universal cover.}

\begin{proof}
Let $\theta$ be a automorphism of $X$ which fixes the sphere $S_1$ of radius 1 around a vertex $x\in X$. It follows from Lemma \ref{L:emma - aut-gi-fixes tripod} that $\theta$ fixes every link of every vertex of $S_1$ which has order at least 2.  In particular, $\theta$ fixes the sphere of radius 2 around $x$. A straightforward induction using Lemma \ref{L:emma - aut-gi-fixes tripod} shows that $\theta$ fixes the complex $X$ pointwise.
This shows that the group $\G$ is of finite index in the automorphism group of $X$.\end{proof}

\begin{remark}
Since Lemma \ref{L:emma - aut-gi-fixes tripod} applies to any of the graphs $G_1,\ldots, G_6$, the proof of Assertion b') shows that, if $\G$ be a group on a complex $X$ with $X/\G$ compact  and   if the link of every vertex in $X$ is isomorphic to one of the 6 graphs $G_1,\ldots, G_6$, then $\G$ is of finite index in the full automorphism group of $X$.  In fact, it is enough to have a complex $X$ whose links satisfy the conclusion of Lemma \ref{L:emma - aut-gi-fixes tripod} for the proof of Assertion b') to imply that $\G$ is of finite index in the automorphism group of $X$. We observe that the conclusion of Lemma \ref{L:emma - aut-gi-fixes tripod} fails for many spherical buildings with chambers missing. 
\end{remark}

Let us now turn to the second modified riddle, whose solution is also used in the next section. 
Our construction shows that the analog of Part a) fails if we allow faces to be triangles and hexagons.

\bigskip

\emph{c$_0$)  There is a CAT(0) polyhedral complex whose link at every vertex is isometric to $G_5$, and whose automorphism group is transitive on vertices. Faces of these complex are all isometric to equilateral triangles or regular hexagons of the Euclidean plane.}

\bigskip

A solution is given by the complex $V$ with presentation described on Fig.\ \ref{fig4}, 
\begin{figure}[h]
\centering
\begin{tikzpicture}
\begin{scope}[xshift=2.5cm]
\path (0:1cm) coordinate (P0);
\path (1*60:1cm) coordinate (P1);
\path (2*60:1cm) coordinate (P2);
\path (3*60:1cm) coordinate (P3);
\path (4*60:1cm) coordinate (P4);
\path (5*60:1cm) coordinate (P5);

\draw [-latex] (P0) -- node[anchor=west]{\tiny{1}}  (P1);
\draw [-latex] (P2) -- node[anchor=south]{\tiny{4}}  (P1);
\draw [-latex] (P2) -- node[anchor=east]{\tiny{2}}  (P3);
\draw [-latex] (P4) -- node[anchor=east]{\tiny{4}}  (P3);
\draw [-latex] (P4) -- node[anchor=north]{\tiny{3}}  (P5);
\draw [-latex] (P0) -- node[anchor=west]{\tiny{4}}  (P5);
\foreach \i in {0,...,5} \shade[ball color=black] (P\i) circle (0.3ex);
\end{scope}

\begin{scope}[xshift=-2.5cm,yshift=-.5cm]
\draw[-latex] (1,0) -- node[anchor=west]{\tiny{2}} (0.5,0.866);
\draw[-latex] (0.5,0.866) -- node[anchor=east]{\tiny{3}} (0,0);
\draw[-latex] (0,0) -- node[anchor=north]{\tiny{1}} (1,0);
\shade[ball color=black] (0,0) circle (0.3ex);
\shade[ball color=black] (1,0) circle (0.3ex);
\shade[ball color=black] (0.5,0.866) circle (0.3ex);
\end{scope}

\begin{scope}[xshift=-.5cm,yshift=-.5cm]
\draw[-latex] (1,0) -- node[anchor=west]{\tiny{3}} (0.5,0.866);
\draw[-latex] (0.5,0.866) -- node[anchor=east]{\tiny{2}} (0,0);
\draw[-latex] (0,0) -- node[anchor=north]{\tiny{1}} (1,0);
\shade[ball color=black] (0,0) circle (0.3ex);
\shade[ball color=black] (1,0) circle (0.3ex);
\shade[ball color=black] (0.5,0.866) circle (0.3ex);
\end{scope}
\end{tikzpicture}
\caption{}\label{fig4}
\end{figure}
that is, $V$ is built out of two equilateral triangles and an hexagon, which are assembled according to labels, respecting orientation.

Of course, this second complex is also a solution to the first modified riddle, where the second link is a circle of length $2\pi$.

\medskip

We need a definition before continuing.

\begin{definition}\label{weakbmiss}
We call \emph{weak building with chambers missing} a couple $(X,\G)$ satisfying all conditions of Definition \ref{bmiss}, except perhaps for \ref{bmiss}.b) which we weaken to
\begin{itemize}
\item[\ref{bmiss}.\= b)] $\lambda(X)\cup  (\G' C_1)\cup\ldots \cup (\G'C_n)= X'$.
\end{itemize}
\end{definition}

This allows, for example, to remove chambers together with some of their panels (to the extent to which the others conditions still hold). 

\medskip

Here is the last part of Theorem \ref{th101}:

\medskip

\emph{c$_1$) Let $\G$ be the fundamental group $\G=\pi_1(V)$ of the complex described on Fig.\ \ref{fig4} and $X=\tilde V$ be its universal cover. Then:
\begin{enumerate}
\item $(X,\G)$ is not a building with chambers missing;
\item $(X,\G)$ is a weak building with chambers missing of type $\tilde A_2$;
\item $X$ has the isolated flat property. 
\end{enumerate}}

\begin{proof}
(1) is straightforward. To show (2), we observe that  the complex  obtained from the exotic building constructed \cite[Section 3]{toulouse} by removing all faces which have more than two  white vertices (see Fig.\ 23 in \cite{toulouse}) is isometric to $V$. We recall this exotic buildings has two types of vertices, black or white, depending on whether the isomorphism type of the 2-sphere is that of the building of $\PGL_3(\QI_2)$ or of $\PGL_3(\FI_2((y)))$, and that its isometry group is transitive on vertices of a given type. 

 Finally, that $X$ has isolated flats readily follows from Lemma \ref{lem:isfl}: the inverse pyramid $L$ satisfies   $\rk_\infty(L)\leq 1^+$ (compare Prop. \ref{rank3}). 
\end{proof}

\begin{remark}\label{rem:b4} 1) It would be interesting to have a local criterion (similar to \ref{th:loccrit}) for weak buildings with chambers missing. 

2) Let $\G=B_4/Z$ be the quotient of the 4-string braid group by its center. It is known (see \cite[Fig.\ 5]{b4} and references therein) that $\G$ acts on a CAT(0) complex $X$ of dimension 2  whose link is a building of type $A_2$ with 5 chambers missing. The action of $\G$ is transitive on vertices and proper (but not free).  With a generalisation of Definition \ref{bmiss} to proper actions, one can show that  that $(X,\G)$ is not a weak building with missing chamber of type $\tilde A_2$. This should be compared to Prop. 20 and Rem. 21 in \cite{b4}, which can be generalized to the extension map $(X,\G)\leadsto (X',\G')$.
\end{remark}

\section{The Haagerup property for buildings with chambers missing}\label{Sect:properties}

Our main result of this section is the following theorem, which gives a new example of a group with the Haagerup property. For information on the latter, we refer to \cite{Valette-book}.

\begin{theorem}\label{th-haagerup} Let $\G$ be the fundamental group $\G=\pi_1(V)$ of the complex described on Fig.\ \ref{fig4}. Then $\G$ has the Haagerup property.
\end{theorem}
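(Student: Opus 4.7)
The plan is to apply the Haglund--Paulin criterion \cite{HP}: it suffices to equip $X$ with a $\G$-invariant space-with-walls structure whose wall pseudometric is proper and bounded below by a constant multiple of the CAT(0) distance. By \cite{HP} this produces a proper action of $\G$ by affine isometries on a Hilbert space with a proper conditionally negative definite coefficient, hence the Haagerup property.

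To build the walls I would use the standard mid-segment recipe, adapted to the triangle/hexagon face structure of $V=X/\G$ depicted in Figure~\ref{fig-groupG}. Inside each hexagonal 2-cell take the three diameters joining midpoints of opposite sides; inside each triangular 2-cell take the three arcs joining each pair of edge midpoints through the barycentre. Two local arcs lying in adjacent faces are declared to belong to the same global wall if and only if they share a common edge midpoint of $X$. The resulting connected graphs $W \subset X$ are the candidate walls, and the family $\cW$ of all such graphs is $\G$-invariant because its definition depends only on the face labelling.

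With $\cW$ constructed, two verifications remain. First, each $W$ should be an embedded bi-infinite graph that separates $X$ into exactly two half-spaces; two-sidedness and the separating property are then automatic from simple-connectivity of $X$. Second, the wall pseudometric $d_{\cW}(x,y) = \#\{W \in \cW : W \text{ separates } x \text{ from } y\}$ should satisfy $d_{\cW}(x,y) \gtrsim d(x,y)$. Inside any 2-flat $F\subset X$ (such flats exist since $X$ is not hyperbolic, cf.~\ref{th101}), the walls restrict to three families of parallel Euclidean lines, so $d_{\cW}|_F$ is bi-Lipschitz to the Euclidean metric on $F$. Outside the flats one uses the isolated flats property together with the bound $\rk_\infty(G_5) \leq 1^+$ to deduce that any CAT(0) geodesic crosses a linearly growing number of walls.

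The main obstacle lies in the first verification, at midpoints of edges of type $4$. Because the generator $t$ appears three times with exponent $-1$ in the hexagonal relation $xt^{-1}yt^{-1}zt^{-1}$, three sides of the hexagon are identified into a single edge of $X$ labelled $4$, and three hexagonal diameters are therefore locally incident to each midpoint of such an edge. A naive gluing would fuse these into a single branching wall and destroy the separating property. The fix uses the precise combinatorics of the inverse pyramid $G_5$ at the two endpoints of a type-$4$ edge: one shows that the three incident diameters pair up transversally with the mid-arcs from the triangles meeting at the other endpoints, producing three distinct non-branching walls rather than one. Once this local compatibility at type-$4$ midpoints is established, the remaining estimates and the application of \cite{HP} are routine.
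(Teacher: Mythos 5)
Your overall strategy (an equivariant wall system plus the Haglund--Paulin criterion) is the same as the paper's, but the wall system you construct does not work, and the failure is at the triangles, not at the type-$4$ edges. In a triangular face the three arcs joining pairs of edge midpoints through the barycentre pairwise overlap: any two of them share a whole leg of the barycentric tripod as well as an edge midpoint, so under your gluing rule all three lie in the same wall, and any wall meeting a triangle contains the full tripod and trisects that triangle at its barycentre. Worse, the identifications propagate: a wall containing the tripod of one triangle reaches all three of its edge midpoints, hence absorbs the arcs of every face adjacent to it, hence the tripods of the neighbouring triangles and the diameters of the neighbouring hexagons, and so on; since every edge of $X$ either bounds a triangle or is opposite, across a hexagon, to one that does, the equivalence classes collapse to a single ``wall'' equal to the union of all the arcs. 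Its complement is one open star per vertex of $X$, i.e.\ it has infinitely many components, so no two-sided partition and no wall pseudometric is obtained. This is precisely the known obstruction to the mid-segment recipe in complexes with odd polygons; the paper explicitly notes that the criterion applies directly to \emph{even} polyhedral complexes and that $X$ is not even. Your proposed repair at type-$4$ midpoints addresses a non-problem (three branches of a wall meeting an edge transversally at an interior point still cut a neighbourhood into exactly two pieces) and leaves the barycentre problem untouched.

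The paper's construction is genuinely different: the footprint of a wall on a triangle is a \emph{median from a vertex to the midpoint of the opposite edge} (one median per wall, giving three walls $U_t,V_t,W_t$ per triangle), and on a hexagon it is either a mid-segment or a vertex-to-vertex chord. The walls therefore pass through vertices of $X$, where they are propagated through the link $G_5$ by an antipodal rule (the set of points at distance exactly $\pi$ from the entering direction on a circle of length $2\pi$), and they are branching trees rather than lines. Two substantive lemmas are then required: Lemma \ref{lem105}, which shows that deleting the wall points from the link leaves exactly two trees, and the lemma that $X\setminus U_t$ has exactly two connected components (proved via Lemma \ref{HaagerupTriangle} and a homotopy argument). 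In particular your assertion that two-sidedness and separation are ``automatic from simple-connectivity of $X$'' is not correct even for an embedded tree; that verification is where the real work of the proof lies.
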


We note that since the corresponding simplicial complex $X=\tilde V$ contains flats, the building $(X,\G)$ is non degenerate (as a weak building with chambers missing).

\begin{proof}
We construct a $\G$ invariant family of geometric walls in $X$ with the following properties:
\begin{enumerate}
\item[(A)] the set of walls separating any two points is finite;
\item[(B)] the number of walls separating any two points is goes to infinity with the distance between these points.
\end{enumerate}
This is known, by results of Haglund and Paulin, to imply the Haagerup property for any group acting geometrically on $X$  (see \cite[p. 5 and Prop. 7.4.2]{Valette-book}). This criterion applies for example to CAT(0) cube complexes, and more generally to even polyhedral complexes (the complex $X$ we consider here is not even). 

Our walls in $X$ are also defined locally and will leave footprints on faces as described on the following figure:

\begin{figure}[h]
\centering
\begin{tikzpicture}
\begin{scope}[xshift=2.5cm]
\path (0:1cm) coordinate (P0);
\path (1*60:1cm) coordinate (P1);
\path (2*60:1cm) coordinate (P2);
\path (3*60:1cm) coordinate (P3);
\path (4*60:1cm) coordinate (P4);
\path (5*60:1cm) coordinate (P5);

\draw  (P0) --   (P1);
\draw  (P2) --   (P1);
\draw (P2) --   (P3);
\draw  (P4) --  (P3);
\draw  (P4) --  (P5);
\draw  (P0) --   (P5);
\draw[dotted] (P2) --  (P4);
\foreach \i in {0,...,5} \shade[ball color=black] (P\i) circle (0.3ex);
\end{scope}

\begin{scope}[xshift=6cm]
\path (0:1cm) coordinate (P0);
\path (1*60:1cm) coordinate (P1);
\path (2*60:1cm) coordinate (P2);
\path (3*60:1cm) coordinate (P3);
\path (4*60:1cm) coordinate (P4);
\path (5*60:1cm) coordinate (P5);

\draw  (P0) --   (P1);
\draw  (P2) --   (P1);
\draw (P2) --   (P3);
\draw  (P4) --  (P3);
\draw  (P4) --  (P5);
\draw  (P0) --   (P5);
\draw[dotted] (0,.866) --  (0,-.866);
\foreach \i in {0,...,5} \shade[ball color=black] (P\i) circle (0.3ex);
\end{scope}

\begin{scope}[xshift=-1cm,yshift=-.5cm]
\draw (1,0) --  (0.5,0.866);
\draw (0.5,0.866) --  (0,0);
\draw (0,0) --  (1,0);
\draw[dotted] (0.5,0.866) --  (.5,0);
\shade[ball color=black] (0,0) circle (0.3ex);
\shade[ball color=black] (1,0) circle (0.3ex);
\shade[ball color=black] (0.5,0.866) circle (0.3ex);
\end{scope}
\end{tikzpicture}
\caption{Walls in $X$}\label{fig:wall}
\end{figure}
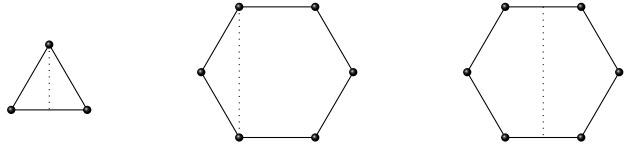

Namely, for each triangle $t\in X$ we define three geodesic trees $U_t, V_t, W_t$ associated to each of the three medians of $t$. We describe the construction for $U_t$, the two others being symmetric. Let $m$ be a fixed median of $t$.

Construct by recurrence a subtree  $U_t^n$ of $X$ with leaves $E_t^n\subset U_t^n$ such that $U_t^{n+1}=U_t^n\cup E_t^{n+1}$ (vertex set equality) in the following way.
Start with $U_t^{1}=m\subset t$  and $E_t^{1}=\del m\subset \del t$, and assume we have constructed $U_t^{n},E_t^{n}$ for some $n\geq 1$. 
For each $e\in E_n$ denote by $L_e$ the link of $e$ in $X$. Thus $L_e$ is either a union of edges of length $\pi$ glued on their boundaries, or the inverse pyramid (with all edges of length $\pi/3$). Let $r_e\in L_e$ be the point corresponding to the unique edge of $U_t^n$ containing $e$. We define $E_{n+1}'(e)$ to be the set of vertices of $L_e$ at distance exactly $\pi$ from $r_e$ on a circle of length $2\pi$ containing $r_e$. 
 
\begin{lemma}\label{lem105} The point $r_e$ and the points in $E_{n+1}'(e)$ are middle points of edges of $L_e$ and their pairwise distance is at least $\pi$. Furthermore,
the graph $L_e\backslash \{E_{n+1}'(e)\cup r_e\}$ is a union of two disjoint connected \emph{trees}.
\end{lemma}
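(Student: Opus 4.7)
\smallskip
\noindent\textit{Proof plan.} The argument splits along the two possible shapes of $L_e$ made explicit in the preceding paragraph: either $e$ is interior to an edge of $X$ and $L_e$ is a theta-graph (two vertices $a, b$ joined by $k \geq 2$ arcs $E_0, \ldots, E_{k-1}$ of length $\pi$), or $e$ is a vertex of $X$ and $L_e$ is the inverse pyramid $G_5$ with every edge of length $\pi/3$.

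In the theta-graph case, write $r_e = m_0$ for the midpoint of $E_0$. Any length-$2\pi$ geodesic circle through $r_e$ must traverse $E_0$ fully, reach $a$ or $b$, close up along exactly one other arc $E_i$, and return; its antipode to $r_e$ is the midpoint $m_i$ of $E_i$. Hence $E_{n+1}'(e) = \{m_1, \dots, m_{k-1}\}$ consists of midpoints, each pair joined by a length-$\pi$ geodesic through $a$ or $b$, so the pairwise distance is exactly $\pi$. Excising $\{m_0, \dots, m_{k-1}\}$ cuts every arc into two halves; the halves based at $a$ form a star $T_a$, and those based at $b$ form $T_b$, producing the two disjoint trees.

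In the inverse-pyramid case I first enumerate the length-$2\pi$ geodesic circles through $r_e$ using the combinatorics of $G_5$ established earlier: the equatorial hexagon carries the unique all-short apartment, while the remaining length-$2\pi$ circles are the mixed $4$-cycles combining two short and two long edges. I verify that the antipode on each such circle is a midpoint of an edge of $L_e$, and show that $\{r_e\}\cup E_{n+1}'(e)$ is precisely the edge-cut realizing the bipyramidal decomposition of $G_5$; its excision splits $G_5$ into two disjoint stars at the apices $B$ and $T$, both of which are trees. The pairwise distance lower bound then follows by observing that any geodesic joining two distinct midpoints of the cut must cross the equator and so has length at least $\pi$.

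The main obstacle is the enumeration-and-verification step in the $G_5$ case: one must confirm exhaustively that every antipode obtained from a length-$2\pi$ circle through $r_e$ has the claimed midpoint form, reconcile the $\pi$-separation condition with the ambient metric on $G_5$, and check that the collected midpoints assemble into exactly the equatorial edge-cut of the bipyramid so that the complement is the desired pair of trees.
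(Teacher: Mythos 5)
Your treatment of the first case ($e$ interior to an edge of $X$, so that $L_e$ is a union of arcs of length $\pi$ joining two vertices) is correct and complete, and since the paper disposes of the whole lemma with ``recurrence and a direct inspection of the inverse pyramid,'' a genuine write-up is welcome. The gap is in the $G_5$ case, where the outcome you assert for your ``enumeration-and-verification step'' is not what the inspection yields. Write the equator of $G_5$ as the $6$-cycle $v_1\cdots v_6$ of short ($\pi/3$) edges, with $T$ joined to $v_1,v_3,v_5$ and $B$ joined to $v_2,v_4,v_6$ by the long ($2\pi/3$) paths. The circles of length $2\pi$ are indeed the equator together with the six mixed $4$-cycles such as $Tv_1v_2v_3T$. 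But a given point lies on at most three of these circles (three when $r_e$ is the midpoint of an equatorial edge, only two when $r_e$ lies on a spoke), so $\{r_e\}\cup E_{n+1}'(e)$ has at most four elements and cannot be the six-point equatorial edge-cut of the bipyramid. Moreover the antipodes do not all lie on the equator: for $r_e=\mathrm{mid}[v_1v_2]$ the antipode on the circle $Tv_1v_2v_3T$ is the point of the spoke joining $T$ to $v_3$ at distance $\pi/6$ from $T$. Finally, your separation argument contradicts the very statement being proved: consecutive equatorial midpoints are at distance $\pi/3$, so the equatorial cut is \emph{not} $\pi$-separated, whereas the actual antipode set is.

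There is also a quantitative obstruction you must confront before the ``pair of trees'' conclusion can be drawn by your route. In its simplicial realization $G_5$ has $14$ vertices and $18$ edges, hence Euler characteristic $-4$; deleting a point from the interior of an edge raises the Euler characteristic by one, so a complement consisting of two disjoint trees (Euler characteristic $2$) requires exactly six deleted interior points, while the antipode count above produces at most four. Hence ``check that the complement is the desired pair of stars at $T$ and $B$'' cannot succeed as written. You need to redo the case analysis honestly --- $r_e$ the midpoint of an equatorial edge, $r_e$ on a spoke at distance $\pi/6$ from the apex, $r_e$ on a spoke at distance $\pi/6$ from the equator (all three occur, since the wall meets a vertex link either along a triangle median or at either end of a hexagon diagonal) --- determine $E_{n+1}'(e)$ explicitly in each case, and then confront what the complement actually is, rather than postulating the bipyramidal decomposition.
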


This can be proved by recurrence and a direct inspection of the inverse pyramid. We note that in particular, the map associating to $r_e$ the subset $E_{n+1}'(e)$ of $L_e$ is local symmetry in the sense of \cite{cras}, i.e.\  it satisfies  (1) and (2) on p. 280. Another example where this property can be exploited to prove the Haagerup property is described in \cite{cras}.

For $e'\in E_{n+1}'(e)$ let $f_{e'}$ be the face of $X$ containing $e'$ and $p_{e'}\in \del f_{e'}$ be the point opposite to $e$ in the direction $e'$. We let 
\[
E_{n+1}=\bigcup_{e\in E_n}\bigcup_{e'\in E_n'(e)} p_{e'}
\]
and define $U_t^{n+1}$ as the geodesic closure of $E_{n+1}$. It is easily checked that $U_t^{n+1}$ is a geodesic tree with boundary $E_{n+1}$ and such that  $U_t^{n+1}=U_t^n\cup E_t^{n+1}$. We let 
\[
U_t=\bigcup_n U_n.
\]
By construction, $U_t$ is a geodesic tree. Its footprint on faces is either empty or as represented on Fig.\ \ref{fig:wall}.

\begin{lemma}\label{HaagerupTriangle}
Let $\Delta$ be a totally geodesic triangle of $X$. Assume that the intersection of $U_t$ and  $\Delta$ contains contains a nonempty open segment of $U_t$. Then the two end points of the segment $\Delta\cap U_t$ are contained in the boundary of $\Delta$.   
\end{lemma}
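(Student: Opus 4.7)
The plan is to show that the connected component $\gamma$ of $U_t\cap\Delta$ containing the given open segment is a single geodesic arc whose two extremities lie on $\del\Delta$. This amounts to two subclaims: (i) $\gamma$ has no branch point inside $\mathrm{int}(\Delta)$, and (ii) $\gamma$ has no endpoint inside $\mathrm{int}(\Delta)$.

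For (i), I would argue as follows. At any $p\in\mathrm{int}(\Delta)$ lying on $U_t$, the link of $p$ in $\Delta$ is a circle $S_p\subset L_p$ of length exactly $2\pi$, since $\Delta$ is flat at $p$. If $p$ is in the interior of an edge of the tree $U_t$, there is nothing to check: the two tangent directions of $U_t$ at $p$ are antipodal in $S_p$. Otherwise $p$ is a node of $U_t$, so $p\in E_t^n$ for some $n$, and the tangent directions of $U_t$ at $p$ form the finite set $\{r_p\}\cup E_{n+1}'(p)\subset L_p$. By Lemma \ref{lem105} these tangent directions have pairwise distance at least $\pi$ in $L_p$, hence also in $S_p$; but on a circle of length $2\pi$ any set of pairwise $\pi$-separated points has at most two elements, necessarily antipodal. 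So at most two tangent directions of $U_t$ at $p$ lie in $S_p$, and if two they are antipodal: no branching of $\gamma$ can occur in $\mathrm{int}(\Delta)$.

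For (ii), suppose for contradiction that an endpoint $p$ of $\gamma$ lies in $\mathrm{int}(\Delta)$. Since $U_t=\bigcup_nU_t^n$ has no leaves (each leaf of $U_t^n$ becomes interior to $U_t^{n+1}$), the point $p$ must be a node of $U_t$, and the set of tangent directions of $U_t$ at $p$ is $\{r_p\}\cup E_{n+1}'(p)$. The direction along which $\gamma$ reaches $p$ is one of these and lies in $S_p$ by hypothesis; up to relabelling this incoming tangent as $r_p$, the circle $S_p$ itself is a $2\pi$-circle in $L_p$ containing $r_p$. By the very definition of $E_{n+1}'(p)$, using $S_p$ as a witnessing $2\pi$-circle through $r_p$ puts the $S_p$-antipode of $r_p$ into $E_{n+1}'(p)$. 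This provides a second tangent direction of $U_t$ at $p$, antipodal to the incoming one and lying in $S_p$, along which $U_t$ extends past $p$ inside $\Delta$, contradicting that $p$ is an endpoint of $\gamma$.

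The main obstacle is the precise matching between the formal definition of $E_{n+1}'(p)$—quoted as ``vertices of $L_p$''—and the concrete ``midpoints of edges of $L_p$'' identified in Lemma \ref{lem105}. One must verify case by case, according to whether $L_p$ is the inverse pyramid $G_5$ (at a vertex of $X$) or a book of pages (at the midpoint of an edge of $X$), that the antipode of $r_p$ in the flat circle $S_p$ is genuinely captured as a tangent direction of $U_t$ by the construction, so that the second tangent direction exhibited in (ii) really exists; this is precisely what the local symmetry property recorded by Lemma \ref{lem105} amounts to in practice.
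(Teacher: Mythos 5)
Your overall strategy is the paper's: assume an endpoint $p$ of $\Delta\cap U_t$ is interior to $\Delta$ and derive a contradiction by exhibiting a second tangent direction of $U_t$ at $p$ lying on the link circle $S_p$ of $\Delta$, so that $U_t$ continues past $p$ inside $\Delta$. Your added step (i), ruling out branching, is not in the paper (which takes the segment structure of $\Delta\cap U_t$ as given by the statement) and is a reasonable supplement. The genuine gap is your repeated assertion that $S_p$ has length exactly $2\pi$ ``since $\Delta$ is flat at $p$''. This is correct when $p$ lies in an open edge of $X$: there $L_p$ is a union of arcs of length $\pi$ glued along their endpoints, every embedded circle has length exactly $2\pi$, and your antipode argument is exactly the paper's. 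But when $p$ is a vertex of $X$, the link is the inverse pyramid and $\Delta$ corresponds to a circle of length $\geq 2\pi$ in $L_p$ which may be strictly longer than $2\pi$ (the paper is explicit about this, both here and in the proof of the following lemma); a totally geodesic disk in a $2$-complex need not be flat at a singular vertex. In that case $S_p$ is not a witnessing $2\pi$-circle for the definition of $E_{n+1}'(p)$, and the ``$S_p$-antipode of $r_p$'' need not belong to $E_{n+1}'(p)$, so your step (ii) does not go through as written.

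The fix is the other half of Lemma \ref{lem105}, which you cite but use only for the pairwise-distance estimate: $L_p\setminus(E_{n+1}'(p)\cup\{r_p\})$ is a disjoint union of two trees, so \emph{any} embedded circle of $L_p$ through $r_p$ --- in particular $S_p$, whatever its length --- must meet $E_{n+1}'(p)$ in at least one further point $e'$; that $e'$ is a tangent direction of $U_t$ at $p$ lying on $S_p$, at distance $\geq\pi$ from the incoming direction, and it yields the desired point of $(U_t\cap\Delta)\setminus I$ beyond $p$. This separation property, not the flatness of $S_p$, is what the paper's vertex case rests on. (A smaller point: ``$U_t$ has no leaves'' does not by itself imply that $p$ is a node of $U_t$; what forces $p$ onto the node set is that an interior endpoint of $\gamma$ must lie on the $1$-skeleton of $X$, since at an interior point of a face $\Delta$ locally fills the face and the footprint of $U_t$ there is a full chord. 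Your part (i) does survive the possible non-flatness of $S_p$, because every circle of the inverse pyramid has length $<3\pi$ and hence still carries at most two points at pairwise distance $\geq\pi$.)
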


\begin{proof}
Let $I=\Delta\cap U_t$, so that by assumption $I$ is a closed geodesic segment with nonempty interior, say $I=[p,q]$. We must show that both $p$ and $q$ belong to the boundary of $\Delta$. So let us assume for instance that $p$ is interior to $\Delta$ (the other case being similar).

By construction of $U_t$ we have either that $p$ belongs  to an open edge of $X$, or that $p$ is a vertex of $X$. In the first case, the link at $p$ is a reunion of edges of length $\pi$. Therefore, if $p$ is interior to $\Delta$, then $\Delta$ is flat at $p$ and corresponds to a circle of length $2\pi$ in $L_p$. The point on this circle at distance $\pi$ from the point corresponding to $I$ in $L_p$ projects to a point $p'$ in $\Delta$, $p'\neq p$. But then by construction, $p'\in U_t$ and this contradicts the fact that $I=\Delta\cap U_t$. 

Assume now that $p$ is a vertex, so that $L_p$ is the inverse pyramid and $\Delta$ now corresponds to a circle of length $\geq 2\pi$ in $L_p$. By Lemma \ref{lem105}, we can choose a point $e'$ at distance $\pi$ from the point corresponding to $I$ on this circle, and a corresponding point $p'$  of $\Delta$ which corresponds to $e'$, $p'\neq p$, and such that $p'\in U_t$. This gives a contradiction and proves the lemma. 
\end{proof}

\begin{lemma}
The set $X\backslash U_t$ has exactly two connected components. 
\end{lemma}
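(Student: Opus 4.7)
The plan is to use that $U_t$ is a geodesic tree which locally separates $X$ into two sides, combined with the simple connectivity of both $X$ and $U_t$, in order to apply Mayer--Vietoris and conclude that $X\setminus U_t$ has exactly two connected components.

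\medskip

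First I would establish the local separation statement: for every $p\in U_t$ and every sufficiently small $\e>0$, the punctured neighborhood $B(p,\e)\setminus U_t$ has exactly two connected components. Indeed, $B(p,\e)\setminus\{p\}$ deformation retracts onto the link $L_p$, and under this retraction $U_t\cap B(p,\e)$ corresponds to the finite set of directions of $U_t$ at $p$. When $p$ is a vertex of $X$ or an interior edge point at which $U_t$ branches, these directions are exactly the point $r_e$ together with the points of $E_{n+1}'(e)$ appearing in the inductive construction of $U_t$, and Lemma~\ref{lem105} then directly provides the desired two-component decomposition of $L_p$ minus these directions. When $p$ lies in the interior of a face of $X$, $L_p$ is a circle and the two directions of the local arc of $U_t$ at $p$ are antipodal, so removing them again yields two components.

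\medskip

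Next, I would globalize this into a 2-sided regular neighborhood. Set $N:=\{x\in X:d(x,U_t)<\e\}$ for $\e>0$ small enough; this is a ``ribbon'' around $U_t$ which deformation retracts onto $U_t$. The local separation of Step~1 says that $N\setminus U_t$ carries a locally defined decomposition into two sides along $U_t$. Covering $U_t$ by small open sets on which the two local sides are coherently labelled $\pm$ produces a $\Zb/2$-cocycle on $U_t$; since $U_t$ is a contractible tree, this cocycle is trivial, and the local sides glue to exactly two globally defined connected components of $N\setminus U_t$.

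\medskip

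Finally, I would invoke Mayer--Vietoris for the open cover $X=N\cup(X\setminus U_t)$. Both $X$, being CAT(0), and $N$, which retracts onto the tree $U_t$, are contractible, so $H_1(X)=0$ and $H_0(N)=H_0(X)=\Zb$; combined with $H_0(N\cap(X\setminus U_t))=H_0(N\setminus U_t)=\Zb^2$ from Step~2, the short exact sequence
\[
0\to H_0(N\setminus U_t)\to H_0(N)\oplus H_0(X\setminus U_t)\to H_0(X)\to 0
\]
forces $H_0(X\setminus U_t)=\Zb^2$, and hence $X\setminus U_t$ has exactly two connected components. The main obstacle I anticipate is the rigorous verification of Step~2 at points where $U_t$ branches with high valency (e.g.\ tripod vertices arising at interior edge points of $X$ belonging to three or more faces): there the local two-sidedness has to be traced carefully through the two-tree structure furnished by Lemma~\ref{lem105} to make sure the labels on adjacent local patches match. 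Once this bookkeeping is done, the globalization via contractibility of $U_t$ is immediate.
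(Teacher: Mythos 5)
Your proof is essentially correct, but it follows a genuinely different route from the paper's. The paper argues by contradiction in both directions: to exclude three or more components it takes three points in distinct components, spans a geodesic triangle, and uses Lemma~\ref{HaagerupTriangle} (the statement that $U_t$ meets a totally geodesic triangle in a segment with endpoints on the boundary) together with a link-extension argument to produce a connecting path avoiding $U_t$; to exclude connectedness it takes a path joining the two sides of the initial median, homotopes it to the geodesic through $t$, and extracts a first time of contact with $U_t$ that contradicts Lemma~\ref{lem105}. You instead localize everything in Lemma~\ref{lem105} (plus the trivial circle case at interior points of faces), propagate the two local sides over the contractible tree to get a two-sided regular neighborhood, and finish with Mayer--Vietoris; this bypasses Lemma~\ref{HaagerupTriangle} entirely and delivers the upper and lower bounds on the number of components in one stroke. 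What the paper's route buys is that it stays within elementary CAT(0) convexity and path arguments; what yours buys is brevity and a cleaner separation between the local input and the global topology. Two points in your write-up deserve explicit care: first, your Step~1 at interior points of faces tacitly uses that $U_t$ meets each face in at most one segment (no self-crossings), which the paper also asserts via the footprint description but which should be checked from the construction; second, the nearest-point projection onto $U_t$ is not a retraction since a tripod is not convex, so you should not claim a deformation retraction of $N$ onto $U_t$ --- but this is harmless, since your Mayer--Vietoris computation only needs $N$ connected and $\tilde H_0(N\setminus U_t)\cong\Zb$, both of which you have.
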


\begin{proof}
Assume first that $X\backslash U_t$ has at least 3 connected components and let us find a contradiction. Take three points in three distinct components and consider the geodesic triangle $\Delta$ between these three points. If the intersection of  $\Delta$ and $U_t$ contains no non trivial subsegment of $U_t$, then we can easily find a path in $\Delta$ between its vertices which doesn't intersect $U_t$, and this gives a contradiction.  Otherwise we are in position to apply Lemma \ref{HaagerupTriangle}. Let $[p,q]$ be the intersection of $\Delta$ and $U_t$, where $p,q\in \del \Delta$. If $[p,q]$ intersects only two sides of $\Delta$ or doesn't intersect the interior of $\Delta$, then we readily get a contradiction. If not, it follows that there exists a point $r\in ]p,q[$ such that the points $p,q,r$ belongs to the three different sides of $\Delta$. Let $[A,B]$ be the side of $\Delta$ containing $r$. We will find a path in $X$ from $A$ to $B$ which doesn't intersect $U_t$. We may assume that $]p,r[$ is included in the interior of $\Delta$ and consider, symmetrically, the point $r'\in [q,r]$ of $\del \Delta$ such that $]q,r'[$ is included in the interior of $\Delta$ (possibly, $r=r'$). It is easy to see that the path $\beta_r$ (resp. $\beta_{r'}$) of the link of $r$ (resp.  $r'$)  which correspond to the disk $\Delta$ has length $>\pi$. Furthermore, it is included in a  circle $\gamma_r$ (resp. $\gamma_{r'}$) of length $>2\pi$ such that $\gamma_r\backslash \beta_r$ (resp. $\gamma_{r'}\backslash \beta_{r'}$) does not contain any point corresponding to $U_t$. Therefore, it is possible to extend both $\beta_r$ and $\beta_{r'}$  within $\gamma_r$  and $\gamma_{r'}$ on both sides, and find two paths in $X$ in neighborhoods of $r$ and $r'$ corresponding to the new endpoints  of $\beta_r$  and $\beta_{r'}$, in such a way that these paths do not intersect $U_t$. Furthermore, we can iterate this construction around each vertex of $]r,r'[$ if necessary. This paths can then be extended to construct the desired path from $A$ to $B$. So again, we obtain a contradiction. Thus $X\backslash U_t$ has at most two connected components.

Assume now that $X\backslash U_t$ is connected, and let $p,q$ be the two vertices of $t$ which are not in $U_t$. By assumption, there is a path $\gamma$ from $p$ to $q$ which does not intersect $U_t$. We may assume that $\gamma$ is piecewise linear. Since $X$ is contractible, there is a piecewise linear homotopy $(H(a,\cdot))_{a\in [0,1]}$ between $H(0,\cdot)=\gamma$ and $H(1,\cdot)=$ the geodesic segment from $p$ to $q$. Let $a_0\in [0,1]$ be the smallest value such that the path $H(a,\cdot)$ intersect $U_t$ for all $a\geq a_0$, and denote by $H(a_0,b_0)$ an intersection point with $U_t$ at time $a_0$. The local geometry around $t$ shows that $a_0<1$. Furthermore,  $H(a_0,b_0)$ is a vertex of $X$ whose link $L$ is the inverse pyramid.  Suppose that there exists $\e>0$ sufficiently small, such that $H(a_0-\e,\cdot)$ does not intersect $U_t$. We may assume that $H(a_0-\e',\cdot)$ does not intersect $U_t$ for all $0<\e'<\e$. Then the projection of the subset $\{H(a,b)\mid a_0-\e\leq a\leq a_0+\e, b\in [0,1]\}$ of $X$  into the link of $H(a_0,b_0)$ contains a circle of length $\geq 2\pi$ in  $L$ which contains only one point issued from the tree $U_t$. But this  contradicts Lemma \ref{lem105}. Hence, the points $p$ and $q$ are not in the same connected component.

This shows that $X\backslash U_t$ has exactly two connected components.    
\end{proof}

Associated to $U_t$ are two walls in $X$: denote by $C_1$ and $C_2$ the two connected components of $X\backslash U_t$ given by the above lemma, then the first wall is $(C_1,U_t\cup C_2)$ and the second is $(C_1\cup U_t, C_2)$. 

Similarly, we define two walls for each $V_t$, $W_t$, $t\in X$.

Now (A) and (B) follow from the fact that every maximal geodesic segment of a face of $X$ intersect transversally a finite, but nonempty, family of trees $(U_t,V_t,W_t)$ where $t$ runs over triangles of $X$. 
\end{proof}

\begin{remark} 
  Other properties of buildings with chambers missing are non generic but hold in many concrete cases, for example: 
    deficiency 1  (e.g. one-relator groups with 2 generators, or 3 generated groups with two relators), indicability, positive first $\ell^2$ Betti number, etc.  
We also remark that, in Gromov's density model, the Haagerup property holds with overwhelming probability in density $d<1/6$, for these groups act freely on a CAT(0) cube complex with compact quotient \cite{OW}. 
\end{remark}

\begin{question}
Let $(X,\G)$ be a  building with (at least one) chambers missing. When does $\G$ have the Haagerup property? When does $\G$ act (say properly with compact quotient) on a cube complex?   
\end{question}

It seems difficult to formulate a general criterion. Related issues are addressed in  the recent paper \cite{HW} of Hruska and Wise, along with an in-depth study of cubulating techniques for groups acting on spaces with walls.

\end{document}